\definecolor{grey}{rgb}{.7,.7,.7}
\newtheorem{theorem}{Theorem}[section]
\newtheorem{lemma}[theorem]{Lemma}
\newtheorem{proposition}[theorem]{Proposition}
\newtheorem{corollary}[theorem]{Corollary}
\newtheorem{remark}[theorem]{Remark}
\newtheorem{definition}[theorem]{Definition}
\newtheorem{example}[theorem]{Example}
\newcommand{\DIV}{{\rm div \,}}
\newcommand{\R}{{\mathbb{R}}}
\newcommand{\N}{{\mathbb{N}}}
\newcommand{\e}{{\varepsilon}}
\newcommand{\g}{{\gamma}}
\newcommand{\G}{{\Gamma}}
\newcommand{\pa}{{\partial}}
\newcommand{\C}{{\mathcal{C}}}
\newcommand{\M}{{\mathcal{M}}}
\newcommand{\cI}{{\mathcal{I}}}
\newcommand{\intY}{{Y^{\circ}}}
\newcommand{\dd}{{\rm{dist}}}
\newcommand{\inr}{{\rm{inr}}}
\newcommand{\RR}{{\rm{reach}}}
\newcommand{\Hau}{{\mathcal H}}
\newcommand{\diam}{{\mathrm{diam}}}
\newcommand{\vertiii}[1]{{\left\vert\kern-0.25ex\left\vert\kern-0.25ex\left\vert #1 
    \right\vert\kern-0.25ex\right\vert\kern-0.25ex\right\vert}}
\newcommand{\Om}{\Omega}
\newcommand{\om}{\omega}
\newcommand{\na}{{\nabla}}
\numberwithin{equation}{section}
\begin{document}

%%%tikz colors
\definecolor{qqwuqq}{rgb}{0.,0.39215686274509803,0.}
\definecolor{ttqqqq}{rgb}{0.2,0.,0.}

\title[The Cheeger constant of a Jordan domain without necks]{The Cheeger constant of a Jordan domain without necks}

\author{Gian Paolo Leonardi}
\address{Dipartimento di Scienze Fisiche, Informatiche e Matematiche, Universit{\`a} degli Studi di Modena e Reggio Emilia, via Campi 213/b, I-41125 Modena, Italy}
\email{gianpaolo.leonardi@unimore.it}

\author{Robin Neumayer}
\address{Department of Mathematics, Northwestern University, 2033 Sheridan Road,
Evanston, IL, USA 60208}
\email{neumayer@math.northwestern.edu}

\author{Giorgio Saracco}
\address{Department Mathematik, Universit\"at Erlangen-N\"urnberg, Cauerst. 11, D-91058 Erlangen, Germany}
\email{saracco@math.fau.de}

\thanks{G.P. Leonardi and G. Saracco have been supported by GNAMPA projects: \textit{Problemi isoperimetrici e teoria geometrica della misura in spazi metrici} (2015) and \textit{Variational problems and geometric measure theory in metric spaces} (2016). 
R. Neumayer is supported by the NSF Graduate Research Fellowship under Grant DGE-1110007.}

\subjclass[2010]{Primary: 49K20, 35J93. Secondary: 49Q20}

\keywords{Cheeger constant, cut locus, focal points, inner Cheeger formula}

\begin{abstract}
We show that the maximal Cheeger set of a Jordan domain $\Om$ without necks is the union of all balls of radius $r = h(\Om)^{-1}$ contained in $\Om$. Here, $h(\Om)$ denotes the Cheeger constant of $\Om$, that is, the infimum of the ratio of perimeter over area  among subsets of $\Om$, and a Cheeger set is a set attaining the infimum. The radius $r$ is shown to be the unique number such that the area of the inner parallel set $\Omega^r$ is equal to $\pi r^2$. The proof of the main theorem requires the combination of several intermediate facts, some of which are of interest in their own right. Examples are given demonstrating the generality of the result as well as the sharpness of our assumptions. In particular, as an application of the main theorem, we illustrate how to effectively approximate the Cheeger constant of the Koch snowflake.
\end{abstract}

 \hspace{-2cm}
 {
 \begin{minipage}[t]{0.6\linewidth}
 \begin{scriptsize}
 \vspace{-3cm}
 This is a pre-print of an article published in \emph{Calc. Var. Partial Differential Equations}. The final authenticated version is available online at: http://dx.doi.org/10.1007/s00526-017-1263-0
 \end{scriptsize}
\end{minipage} 
}

\maketitle

\section{Introduction}
Given an open bounded set $\Omega\subset \R^n$ for $n\geq 2,$ we consider the minimization problem
\begin{equation}\label{eqn: Cheeger pb}
h(\Om) = \inf \left\{ \frac{P(F)}{|F|} : F \subseteq \Omega, \ |F|>0 \right\}\,,
\end{equation}
where by $|F|$ and $P(F)$ we denote the volume and the perimeter of $F$ respectively.
This classical isoperimetric-type problem was first considered by Steiner \cite{Steiner1841} and Besicovitch \cite{Besicovitch} in the context of convex subsets of the Euclidean plane; see also \cite{CroftFalconerGuy,SingmasterSouppuris}. 
%\replaceR{In \cite{Cheeger70}, Cheeger proved the lower bound 
%\[
%\lambda_2(M) \geq \frac{h(M)^2}{2}\,,
%\]
%where $\lambda_2(M)$ is the first eigenvalue of (minus) the Laplace-Beltrami operator on a compact Riemannian manifold $M$, and $h(M)$ is as in \eqref{eqn: Cheeger pb} with the extra condition that $|F| \le |M|/2$ (here, of course, the volume and perimeter are the Riemannian ones).}
%{
In \cite{Cheeger70}, Cheeger proved a lower bound on the first eigenvalue of (minus) the Laplace-Beltrami operator on a compact Riemannian manifold in terms of the infimum in the appropriate analogue of \eqref{eqn: Cheeger pb}.
%}
 Since then, the problem \eqref{eqn: Cheeger pb} has been known as the \textit{Cheeger problem} and has appeared in a number of fields like capillarity theory \cite{Giusti78, Finn1984, LS16a}, image processing \cite{BelCasNov2002,AltCasCha2005,CasChaNov2011}, landslide modeling \cite{HILR02,IL05, HIL05}, and fracture mechanics \cite{Kel80}. We refer the reader to the survey paper \cite{Leo15} for a further discussion of applications.

The infimum in \eqref{eqn: Cheeger pb} is readily shown to be a minimum via the direct method in the calculus of variations. Any set attaining the minimum is known as \textit{Cheeger set} of $\Om$, and $h(\Om)$ is called the \textit{Cheeger constant} of $\Om$. Various geometric properties of Cheeger sets can be deduced, two of which we mention now. First, since a Cheeger set $E$ minimizes the perimeter among sets $F \subset \Om$ with $|F|=|E|$, classical regularity results show that $\pa E \cap \Om$ is an analytic hypersurface outside a closed singular set of Hausdorff dimension at most $n-8$, with constant mean curvature equal to $h(\Om)$. In particular, if $n=2$, then $\pa E\cap \Om$ is a union of circular arcs of radius $r=1/h(\Om)$. Second, while uniqueness is false in general, the class of Cheeger sets of $\Om$ is closed under countable unions (see, for instance, \cite[Proposition~2.5, Examples~4.5-4.6]{PraLeo14}), and so one can define the \textit{maximal Cheeger set} of $\Om$ as the union of all its Cheeger sets. 

Computing the Cheeger constant and finding the Cheeger sets of a given domain $\Om$ is a generally difficult problem. Some numerical methods based on duality theory were employed in \cite{LRO05, Buttazzo07, CarComP09} to approximate the maximal Cheeger set of $\Om$. However, apart from a few specific examples, Cheeger sets have been precisely characterized for only two classes of domains: convex planar sets and planar strips. In \cite{Besicovitch}, (see also \cite{SingmasterSouppuris, StredZ97, KawFrid03, Kawohl06}) it is shown that if $\Omega \subset \R^2$ is convex, then it has a unique Cheeger set $E$ given by
\begin{equation}\label{eqn: union}
E = \bigcup_{x \in \Om^{r}} B_r(x), \qquad r = \frac{1}{h(\Om)}\,.
\end{equation}
where $\Om^{r}= \{ x \in \Om : \dd(x,\pa \Om)\ge r\}$ is the so-called \textit{inner Cheeger set}. Moreover, in \cite{Kawohl06}, the {\it inner Cheeger formula} for convex sets is proven. That is, $h(\Om) = 1/r$ where $r>0$ is the unique solution to the equation
\begin{equation}\label{eqn: r}
\pi r^2 = |\Om^r|.
\end{equation}
%Notice that existence and uniqueness of a solution to \eqref{eqn: r} follow as soon as we observe that the function $\phi(r) = \pi r^2 -|\Om^r|$ is continuous, increasing, and satisfies $\phi(0)=-|\Om|<0$ and $\phi(R) = \pi R^2>0$ (where by $R$ we have denoted the inradius of $\Om$). 
More recently, further results on the Cheeger problem have been obtained in \cite{KrePra2011} and \cite{PraLeo14} for a class of domains called {\it planar strips}. In particular, in \cite{PraLeo14}, the Cheeger set of a strip is shown to satisfy \eqref{eqn: union} and \eqref{eqn: r} as well. In the same paper, the inner Cheeger formula for a strip of width $2$ and length $L$ is used to provide a first order expansion of the Cheeger constant in terms of $1/L$, as $L\to+\infty$ (see Theorem 3.2 in \cite{PraLeo14}).

\begin{figure}[t]%
\centering
	\subfigure[The Cheeger set $E_{\mathcal W}$ of a bow-tie domain strictly contains the right-hand side of \eqref{eqn: union}.\label{fig:bowtie}]{\includegraphics[scale=1]{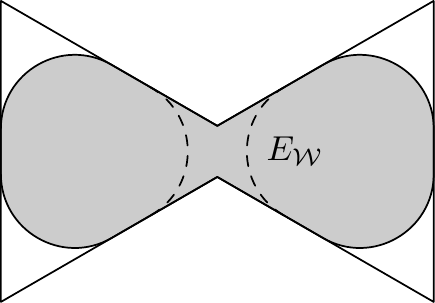}}%
	\hspace{2.5cm}
	\subfigure[The Cheeger set of an unbalanced barbell is strictly contained in the right-hand side of  \eqref{eqn: union}.\label{fig:barbell}]{\includegraphics[width=.28\columnwidth]{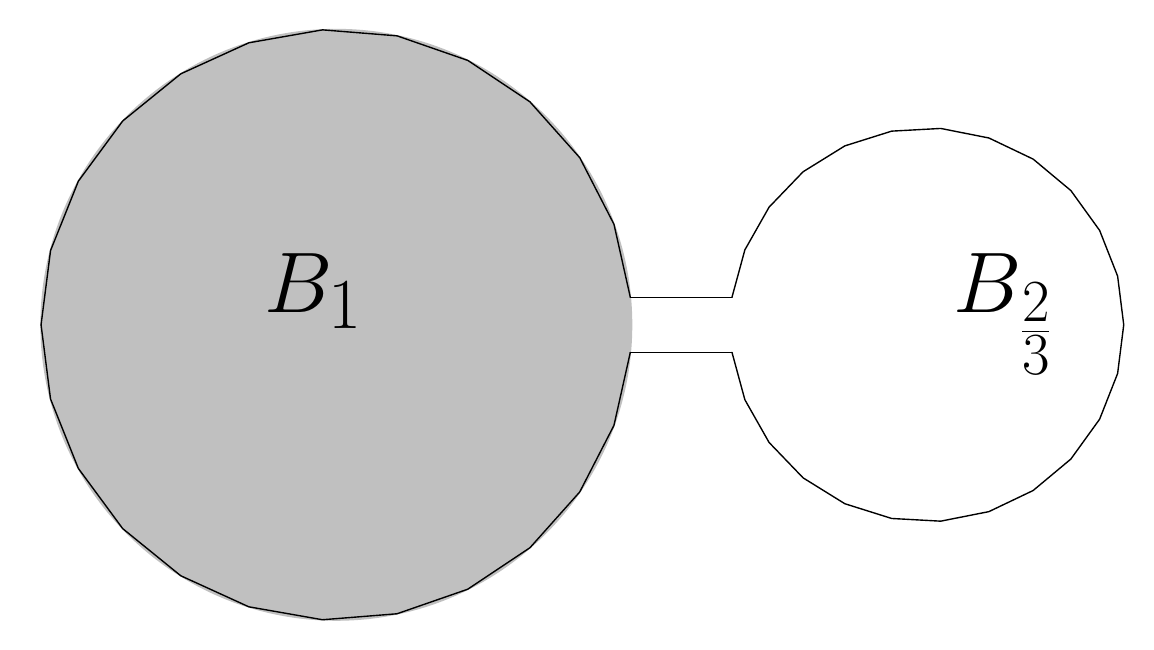}}
\caption{The above figures display different situations where \eqref{eqn: union} does not hold.}
\label{tutta}
\end{figure}

One should not expect a characterization of the type \eqref{eqn: union} to hold in general. Since the constant mean curvature condition forces $\pa E \cap \Om$ to comprise spherical caps only when $n= 2$, it is unsurprising that counterexamples are easily found for $n\geq 3$. But, even in $\R^2$, \eqref{eqn: union} can fail to hold for every Cheeger set of certain domains. For instance, we recall the bow-tie domain $\mathcal{W}$ constructed in 
\cite[Example 4.2]{PraLeo14} and depicted here in Figure~\ref{fig:bowtie}. The Cheeger set $E_{\mathcal W}$ of $\mathcal{W}$ is unique, but $E_{\mathcal W}$ includes the ``neck'' of $\mathcal{W}$, which is not contained in the right-hand side of \eqref{eqn: union}. The opposite situation can also occur. Consider an ``unbalanced'' barbell $\Om$ made of two disks of radii $1$ and $2/3$ at positive distance and connected by a thin tube; see Figure \ref{fig:barbell}. The unique Cheeger set is a small perturbation of the disk of radius $1$ yielding a value of $h(\Omega)$ as close as we wish (depending of the width of the tube) to $2$. Therefore, the union of all disks of radius $1/2$ contained in $\Om$ strictly contains the maximal Cheeger set, as it also includes the entire disk of radius $2/3$.

In general, given a Cheeger set $E$ of $\Om$, it is not even true that every connected component of $\pa E\cap \Om$ is contained in the boundary of a ball of radius $r = h(\Om)^{-1}$ fully contained in $\Om$, as illustrated in the following example.

\begin{example}[The heart domain]\label{ex: heart}
{\rm Given $\theta \in [\pi/4,\pi/2)$, we construct the heart domain $\Om_\theta$ depicted in Figure \ref{fig:heart} in the following way. Let $B\subset \R^2$ be the unit ball of radius $1$ centered at the origin and let $x_\theta$ be the point in $\pa B \cap \{x_2\leq 0\}$ that forms the angle $2\theta $ with $x_0 =(1,0).$ Let $\ell_\theta$ be the tangent line to $B$ at $x_\theta$, and let $A_\theta$ by the region enclosed be the arc of $\pa B$ with angle $2\pi - 2 \theta \geq \pi$, $\ell,$ and $\{x_1 = 1\}.$ Finally, let $\Om_\theta$ be the union of $A_\theta$ with its reflection across $\{x_1 = 1\}$. 

We claim that $\Om_{\pi/4}$ is the unique Cheeger set of itself and $h(\Om_{\pi/4}) = 1 + \frac{3\pi}{3\pi+4}$. 
%\section{The Cheeger problem in the heart domain}
%\begin{lemma}\label{lem: heart}
%Let $\Om_{\pi/4}$ be as in Example~\ref{ex: heart}. Then	 $\Om_{\pi/4}$ is the unique Cheeger set of itself and  $h(\Om_{\pi/4}) = 1 + \frac{3\pi}{3\pi+4}$.
%\end{lemma}
%\begin{proof}
	Indeed, let us preliminarily compute
\begin{align*}
|\Om_\theta| &= 2\big(\pi - \theta + \tan \theta\big),\\
P(\Om_\theta) &= 4(\pi - \theta) + 2\tan \theta\,.
\end{align*}
We focus on the set $\Om_{\pi/4}$. First, note that the convex hull $S$ of $\Om_{\pi/4}$ is a $2\times 2$ square capped with two half disks of radius $1$. It is easily shown that $S$ is the unique Cheeger set of itself, hence $h(S) = \frac{2\pi + 4}{\pi + 4}$. Since the Cheeger constant is monotonically decreasing with respect to the sets' inclusion, we deduce that 
	\begin{equation}\label{eq:h-heart}
	5/4 < h(S) \le h(\Om_{\pi/4}) \le \frac{P(\Om_{\pi/4})}{|\Om_{\pi/4}|} = 1 + \frac{3\pi}{3\pi+4}  < 2\,.
	\end{equation}
Now we prove that $\Om_{\pi/4}$ is the unique Cheeger set of itself. Indeed, we can argue by contradiction considering an internal arc $\g$ of the boundary of a Cheeger set of $\Om_{\pi/4}$. By well-known properties of Cheeger sets (see Proposition \ref{prop:proprietaCS}), we know that (i) the radius $r$ of $\g$ is in $(1/2, 4/5]$ thanks to \eqref{eq:h-heart}, that (ii) $\g$ meets $\pa \Om_{\pi/4}$ in a tangential way at one endpoint (indeed, $\pa \Om_{\pi/4}$ is regular except three points that are at distance as least $2\sqrt{2}$ from each other), and that (iii) $\g$ is at most a half-circle. Together, (i), (ii) and (iii) imply that one of the endpoints of $\g$ must coincide with the projection of the inner cusp point onto the horizontal segment contained in $\pa\Om_{\pi/4}$, and thus the other one must be the cusp point itself. However this implies that $\g$ is a half-circle and that $r_{\pi/4} = 1/2$, which gives a contradiction with the last inequality in \eqref{eq:h-heart}.

By Theorem 2.7 in \cite{PraLeo14}, we have $h(\Om_{\theta})\to h(\Om_{\pi/4})$ as $\theta\to \pi/4$. Moreover, the Cheeger set $E_\theta$ of $\Om_{\theta}$ is unique and $L^1$-close to $\Om_{\pi/4}$ for $\theta$ close enough to $\pi/4$. Using the symmetry of $\Om_{\theta}$ and arguing as above, one easily shows that $E_\theta$ is as in Figure \ref{fig:heart}.
However, if
\begin{equation}\label{tanheart}
\frac{1}{2} \tan \theta < 1 - \frac{\pi - \theta}{2(\pi - \theta) + \tan \theta}
\end{equation}
then it is impossible to complete the bottom arc of $E_\theta$ with a ball of radius $r = h(\Om_{\theta})^{-1}$ fully contained in $\Om_\theta$. Inequality \eqref{tanheart} is satisfied for $\theta = \pi/4$, and thus for $\theta >\pi/4$ sufficiently close to $\frac{\pi }{4}$ as well.
}

\begin{figure}[t]
\centering
\definecolor{eqeqeq}{rgb}{0.8784313725490196,0.8784313725490196,0.8784313725490196}
\begin{tikzpicture}[line cap=round,line join=round,>=triangle 45,x=1.0cm,y=1.0cm]
\clip(-1.3,-2.3) rectangle (3.3,1.2);
\draw [line width=0.4pt,dash pattern=on 1pt off 1pt,color=eqeqeq,fill=eqeqeq,fill opacity=1.0] (0.,0.) circle (1.cm);
\draw [shift={(0.,0.)},dash pattern=on 1pt off 1pt,fill=black,fill opacity=0.1] (0,0) -- (-129.1528196750553:0.20973517980843692) arc (-129.1528196750553:0.:0.20973517980843692) -- cycle;
\fill[line width=0.pt,color=eqeqeq,fill=eqeqeq,fill opacity=1.0] (-0.631390958857151,-0.7754646717120306) -- (1.,0.) -- (2.631390958857151,-0.7754646717120308) -- (1.8339514176713227,-1.4247477853171082) -- (0.1660485823286771,-1.4247477853171082) -- cycle;
\draw [line width=0.4pt,color=eqeqeq,fill=eqeqeq,fill opacity=1.0] (2.,0.) circle (1.cm);
\draw [dash pattern=on 1pt off 1pt] (-0.631390958857151,-0.7754646717120306)-- (0.,0.);
\draw (-0.631390958857151,-0.7754646717120306)-- (1.,-2.1037592276840296);
\draw [shift={(0.,0.)}] plot[domain=0.:4.029043365615358,variable=\t]({1.*1.*cos(\t r)+0.*1.*sin(\t r)},{0.*1.*cos(\t r)+1.*1.*sin(\t r)});
\draw [shift={(2.,0.)}] plot[domain=0.:4.029043365615358,variable=\t]({-1.*1.*cos(\t r)+0.*1.*sin(\t r)},{0.*1.*cos(\t r)+1.*1.*sin(\t r)});
\draw (2.631390958857151,-0.7754646717120308)-- (1.,-2.1037592276840296);
\draw (1.765047173441928,-0.1030315812691684) node[anchor=north west] {$\Omega_\theta$};
\draw (0.01725400837162019,-0.18692565319254323) node[anchor=north west] {$2\theta$};
\draw [shift={(1.,-0.40050147127606833)}] plot[domain=4.029043365615358:5.395734595154021,variable=\t]({1.*1.3208162169138693*cos(\t r)+0.*1.3208162169138693*sin(\t r)},{0.*1.3208162169138693*cos(\t r)+1.*1.3208162169138693*sin(\t r)});
\draw [shift={(1.,-0.40050147127606833)},line width=0.4pt,dotted,color=eqeqeq,fill=eqeqeq,fill opacity=1.0]  (0,0) --  plot[domain=4.029043365615358:5.395734595154021,variable=\t]({1.*1.3208162169138693*cos(\t r)+0.*1.3208162169138693*sin(\t r)},{0.*1.3208162169138693*cos(\t r)+1.*1.3208162169138693*sin(\t r)}) -- cycle ;
\draw [dash pattern=on 1pt off 1pt] (1.,0.)-- (0.,0.);
\draw [shift={(2.,0.)}] plot[domain=0.:4.029043365615358,variable=\t]({-1.*1.*cos(\t r)+0.*1.*sin(\t r)},{0.*1.*cos(\t r)+1.*1.*sin(\t r)});
\draw (-0.631390958857151,-0.7754646717120306)-- (1.,-2.103759227684029);
\draw (2.631390958857151,-0.7754646717120308)-- (1.,-2.1037592276840296);
\draw [fill=black] (-0.631390958857151,-0.7754646717120306) circle (0.5pt);
\draw[color=black] (-0.9,-0.9) node {$x_\theta$};
\end{tikzpicture}
\caption{Heart domain $\Om_{\theta}$ \label{fig:heart}}
\end{figure}
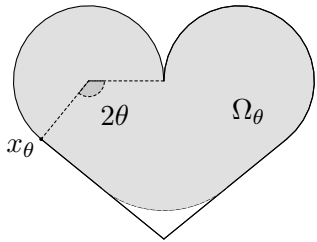
\end{example}

 The main goal of this work is to show that, for a broad class of domains in $\R^2$ that includes both convex sets and strips, the maximal Cheeger set is given by \eqref{eqn: union} and that the Cheeger constant can be determined as the inverse of the unique solution of \eqref{eqn: r}. The examples above will demonstrate that this class of domains is essentially optimal. 
To this end, in the following definition, we introduce the concept of a \textit{set with no necks of radius $r$}.
\begin{definition}\label{def:noneck} A set $\Om$ has no necks of radius $r$ if the following condition holds. If $B_r(x_0)$ and $B_r(x_1)$ are two balls of radius $r$ contained in $\Om$, then there exists a continuous curve $\g : [0,1] \to \Om$ such that 
\begin{align*}
\g(0) = x_0 ,\qquad   \g(1) = x_1, \qquad \text{  and} \qquad B_r(\g(t) )  \subset \Om \text{ for all } t\in [0,1].
 \end{align*}
\end{definition}

\begin{remark}\label{rmk:nobott}
\rm We notice for future reference that the property of having no necks of radius $r$ can be equivalently stated as the path-connectedness of the \textit{inner parallel set} 
\[
\Om^{r} = \{x\in \Om:\ \dd(x,\pa\Om)\ge r\}\,.
\]
\end{remark}

Our main result is the following.

\begin{theorem}\label{theorem:Cheeger formula}
Suppose $\Om \subset \R^2$ satisfies the following two properties:
\begin{align}\label{T}
\tag{T} &\Om\ \text{is a Jordan domain and $|\pa \Om|=0$,}  \\
\tag{NB}\label{NB}  & \Om\ \text{has no necks of radius } r = 1/h(\Om).
\end{align}
Then the maximal Cheeger set $E$ of $\Om$ is given by \eqref{eqn: union} and the inner Cheeger formula \eqref{eqn: r} holds.
\end{theorem}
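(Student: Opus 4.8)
Write $r=1/h(\Om)$ and let $C=\bigcup_{x\in\Om^r}B_r(x)$ denote the right-hand side of \eqref{eqn: union}; clearly $C\subseteq\Om$. I would first invoke the properties of Cheeger sets collected in Proposition~\ref{prop:proprietaCS}: any Cheeger set $F$ minimizes $J(G):=P(G)-r^{-1}|G|$ over $G\subseteq\Om$ with $\min J=J(F)=0$; the free boundary $\pa F\cap\Om$ is a finite union of arcs of radius $r$; and $F$ satisfies an interior ball condition of radius $r$, so in particular contains a ball of radius $r$ and $\Om^r\neq\emptyset$. The maximal Cheeger set $E$ inherits all of these. By Remark~\ref{rmk:nobott}, hypothesis \eqref{NB} says exactly that $\Om^r$, hence its open $r$-neighbourhood $C$, is path-connected; and since $\Om$ is a Jordan domain, $\Om^r$ is simply connected. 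The plan is to prove the two inclusions $E\subseteq C$ and $C\subseteq E$, and then read off \eqref{eqn: r}.

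For $E\subseteq C$, take $p\in E$. If $\dd(p,\pa E)\ge r$ then $B_r(p)\subseteq E\subseteq\Om$, so $p\in C$; if $\dd(p,\pa E)<r$ and the nearest point $q$ lies in $\pa E\cap\Om$, the interior ball of radius $r$ tangent at $q$ lies in $E\subseteq\Om$ and contains $p$, so again $p\in C$. The only delicate points are those whose nearest boundary point lies on $\pa\Om$ and which belong to no inscribed ball of radius $r$: such points necessarily sit in a \emph{thin} portion of $\Om$ containing no ball of radius $r$. I would argue that, if $E$ met such a portion, one could excise the corresponding piece by cutting $E$ along a short segment transverse to the thin region; since the two walls of the region lie on $\pa\Om$ and are long compared with the cut, this strictly lowers $J$ below its minimal value $0$, a contradiction — \emph{unless} the excision disconnects $E$, i.e. the thin region is a neck joining two ball-containing parts of $\Om$. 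This last possibility is exactly what \eqref{NB} forbids (it is what occurs for the bow-tie of Figure~\ref{fig:bowtie}), so $E\subseteq C$.

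For $C\subseteq E$ it suffices, by maximality, to show every ball $B_r(x_0)$ with $x_0\in\Om^r$ lies in $E$. Since $E$ contains some $B_r(x_*)$ with $x_*\in\Om^r$, \eqref{NB} provides a path $\g\colon[0,1]\to\Om^r$ from $x_*$ to $x_0$ along which each $B_r(\g(t))$ lies in $\Om$; the set $\{t:B_r(\g(t))\subseteq E\}$ is nonempty and closed, and I would show it is open by a local comparison proving that $E\cup B_r(\g(t'))$ is again Cheeger for $t'$ near a point $t$ in the set (so the crescent $B_r(\g(t'))\setminus E$ is thin), whence maximality gives $B_r(\g(t'))\subseteq E$. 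Connectedness of $[0,1]$ then yields $C\subseteq E$, so $E=C$. Finally, to obtain \eqref{eqn: r} I would establish a Steiner-type identity for the $r$-neighbourhood $C$ of $\Om^r$: using the distance function to $\pa\Om$ together with a careful accounting of the cut locus and focal points (to handle the non-smoothness of $\pa\Om^r$), one proves
\[
|C|=|\Om^r|+r\,\Hau^1(\pa\Om^r)+\pi r^2,\qquad P(C)=\Hau^1(\pa\Om^r)+2\pi r,
\]
the single additive $\pi r^2$ reflecting that $\Om^r$ is connected and simply connected. Subtracting gives $|C|-r\,P(C)=|\Om^r|-\pi r^2$, and since $E=C$ is Cheeger we have $|C|=r\,P(C)$, forcing $|\Om^r|=\pi r^2$, which is \eqref{eqn: r}.

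The main obstacle is geometric and twofold. First, the tube identity must be proved for the generically non-smooth set $\Om^r$: this is precisely where the analysis of the cut locus and focal points of $\pa\Om$ is indispensable, since these are the loci where the normal map degenerates and the naive Steiner computation fails. Second, the openness step in $C\subseteq E$ rests on the borderline estimate that adjoining a thin crescent of radius-$r$ boundary to $E$ does not raise the ratio $P/|\cdot|$; balancing the competing perimeter and area terms there, and symmetrically the cut-and-excise estimate in $E\subseteq C$, is the technical core — and it is exactly in these estimates that \eqref{NB} is used to rule out the bow-tie and barbell pathologies of Figures~\ref{fig:bowtie} and~\ref{fig:barbell}.
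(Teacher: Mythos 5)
Your skeleton (prove $E\subseteq C$, prove $C\subseteq E$, then subtract two Steiner identities) matches the paper's, but each of the three hard steps is either assumed away or replaced by an argument that does not work. First, you attribute to Proposition~\ref{prop:proprietaCS} an ``interior ball condition of radius $r$'' for Cheeger sets, and from it deduce both that $E$ contains a ball of radius $r$ and that, at any free-boundary point, the interior tangent ball of radius $r$ lies inside $E$. Neither statement is in that proposition. The first is precisely Theorem~\ref{theorem:Pestov and Ionin}, a main result of the paper whose proof occupies all of Section~\ref{section:3}; it cannot be quoted as a standard property, since $\pa E$ is controlled only in the viscosity sense where it meets $\pa\Om$ (which may be as irregular as the Koch snowflake). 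The second is question-begging --- it essentially \emph{is} the inclusion $E\subseteq C$ --- and it is false for general Cheeger sets: in the bow-tie of Figure~\ref{fig:bowtie} the Cheeger set contains the neck, through whose points no inscribed ball of radius $r$ passes. Your fallback ``cut-and-excise'' treatment of points projecting onto $\pa\Om$ contains no estimate (nothing quantifies ``long compared with the cut''), whereas this inclusion is the most delicate part of the paper's proof: one follows a maximal path in the cut locus of $E$ (Proposition~\ref{lem: structure}), controls its endpoints via Lemmas~\ref{prop: focal point distance} and~\ref{prop: at least one}, closes the path through the ball region using \eqref{NB} and the rolling ball lemma, and reaches a topological contradiction inside the resulting Jordan loop. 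That argument needs $E$ to be (equivalent to) a finite union of simply connected open sets, i.e.\ Proposition~\ref{prop:sconnectednessE}, which is the only place the hypothesis $|\pa\Om|=0$ enters --- a hypothesis your proof never uses, a warning sign in itself.

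Second, the openness step in your proof of $C\subseteq E$ rests on a comparison that is quantitatively false. If $B_r(x)\subseteq E$ and $y$ is a nearby center with $|y-x|=\e$, the crescent $B_r(y)\setminus E$ can be the full lune $B_r(y)\setminus B_r(x)$ (locally nothing prevents $\pa E$ from coinciding with $\pa B_r(x)$ there), and with $J(F)=P(F)-r^{-1}|F|$ one computes, setting $\delta=\arcsin\left(\tfrac{\e}{2r}\right)$,
\begin{equation*}
J\bigl(B_r(x)\cup B_r(y)\bigr)-J\bigl(B_r(x)\bigr)\;=\;r\,\bigl(2\delta-\sin 2\delta\bigr)\;=\;\frac{\e^{3}}{6r^{2}}+o(\e^{3})\;>\;0\,.
\end{equation*}
So adjoining a single overlapping ball of radius $r$ \emph{strictly increases} the Cheeger functional: a thin crescent bounded by two radius-$r$ arcs always has added perimeter exceeding $r^{-1}$ times added area, by a cubic margin. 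Hence no local comparison can show that $E\cup B_r(\g(t'))$ is again a Cheeger set, and your open/closed argument collapses. The defect vanishes only under a continuous sweep --- a tube of radius $r$ about a $C^{1,1}$ curve with curvature at most $1/r$ has $J$ exactly $\pi r$, no larger than a single ball --- and this is the content of the rolling ball Lemma~\ref{lem:rollingball}. That lemma, however, requires the curve of centers to be $C^{1,1}$ with curvature bounded by $1/r$, while \eqref{NB} furnishes only a continuous path in $\Om^r$; bridging exactly this gap is Theorem~\ref{lem:eraser}, which your proposal omits entirely. Finally, your Steiner identities for $C=\Om^r\oplus B_r$ are the right idea, but to justify them one needs $\RR(\Om^r)\ge r$ (Lemma~\ref{lemma:reach}, again a consequence of \eqref{NB}), and the correct length coefficient is the outer Minkowski content $\M_o(\Om^r)$ rather than $\Hau^1(\pa\Om^r)$ --- the two can differ for the non-smooth sets arising here --- though since the same coefficient appears in both identities, this last point does not affect your subtraction.
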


Uniqueness of Cheeger sets may fail even under the assumptions \eqref{T} and \eqref{NB}, as is seen, for instance, in the Pinocchio set of \cite[Example 4.6]{PraLeo14}). However, in  Remark~\ref{rem:uniqueness}, we prove that if additionally $\overline{\text{\rm int}(\Omega^r)} = \Omega^r$, then the Cheeger set of $\Om$ is unique.

Let us make some comments on the assumptions we make on $\Om$. The assumption \eqref{NB} is essentially necessary. Indeed, if we remove this assumption, then \eqref{eqn: union} and \eqref{eqn: r} may fail, as we have seen in the examples of the bow-tie domain $\mathcal W$, the unbalanced barbell $B_{1, 2/3}$, and the heart domain $\Omega_{\theta}$ shown in Figures \ref{fig:bowtie}, \ref{fig:barbell} and \ref{fig:heart}. The assumption  \eqref{T} implies that any closed loop in $\Omega$ is contractible, which is necessary for \eqref{eqn: union} to hold. Indeed, consider a domain $\Om$ given by a disk $D$ minus a small disk $H$ near $\pa D$, so that $\Om$ is not simply connected.  If the radius of $H$ is small enough, one can show that $\Om$ coincides with its own Cheeger set and properties \eqref{eqn: union} and \eqref{eqn: r} are not satisfied. 
We could relax assumption \eqref{T} slightly by allowing $\Omega$ to be a finite union of Jordan domains. But, \eqref{NB} would force the maximal Cheeger set of $\Omega$ to be contained in a single connected component of $\Omega$, so this relaxation would be inconsequential. It is not clear whether the hypothesis $|\pa \Om| =0$ in \eqref{T} is necessary for Theorem \ref{theorem:Cheeger formula} to hold, but we use it in an essential way in our proof (see Proposition \ref{prop:sconnectednessE}). It is worth recalling the so-called Osgood-Knopp's curve bounding a Jordan domain and having positive Lebesgue measure; see \cite{Sagan}.

Notice that Theorem~\ref{theorem:Cheeger formula} requires the a priori knowledge of the value of $h(\Om)$ to verify hypothesis \eqref{NB}. However, one can replace hypothesis \eqref{NB} by another one slightly stronger but independent of $h(\Om)$. The idea is simple: instead of requiring the no-neck property exactly for the (unknown) radius $r=1/h(\Om)$, we can ask a more restrictive but easier-to-check assumption \eqref{NB'} involving a lower and an upper bound on $r$.  In the following corollary we choose as a competitor the biggest ball that can be fit in $\Omega$, thus yielding a lower bound for $r$, while an upper bound is obtained by applying the isoperimetric inequality. It is, of course, clear that we could have considered other competitors to obtain the lower bound. 
\begin{corollary}\label{cor: cor2}
Suppose $\Om \subset \R^2$ satisfies \eqref{T} and 
\begin{align}\tag{NB$'$}\label{NB'}
\text{$\Om$ has no necks of radius $s$ for all}\ \ \frac{\inr(\Om)}{2} \le s  \le \frac 12 \left(\frac{|\Om|}{\pi}\right)^{1/2}\,,
\end{align}
where $\inr(\Om)$ denotes the inradius of $\Om$. Then the maximal Cheeger set $E$ of $\Om$ is given by \eqref{eqn: union} and $r$ is the unique value for which \eqref{eqn: r} is satisfied.
\end{corollary}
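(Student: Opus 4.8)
The plan is to show that the (a priori unknown) radius $r = 1/h(\Om)$ is forced to lie in the interval appearing in \eqref{NB'}, so that \eqref{NB'} implies \eqref{NB} and Theorem~\ref{theorem:Cheeger formula} can be applied directly. Concretely, I would establish the two-sided estimate
\[
\frac{\inr(\Om)}{2} \le r \le \frac{1}{2}\left(\frac{|\Om|}{\pi}\right)^{1/2}.
\]

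For the lower bound I would test \eqref{eqn: Cheeger pb} against the largest inscribed ball: if $\rho = \inr(\Om)$ and $B_\rho(x)\subset\Om$, then $h(\Om) \le P(B_\rho(x))/|B_\rho(x)| = 2\pi\rho/(\pi\rho^2) = 2/\rho$, which gives $r = 1/h(\Om) \ge \inr(\Om)/2$. For the upper bound I would use that a Cheeger set $E$ satisfies $h(\Om)=P(E)/|E|$, together with the planar isoperimetric inequality $P(E)\ge 2\sqrt{\pi|E|}$ and the inclusion $E\subseteq\Om$. This yields $h(\Om)=P(E)/|E|\ge 2(\pi/|E|)^{1/2}\ge 2(\pi/|\Om|)^{1/2}$, and hence $r\le \frac{1}{2}(|\Om|/\pi)^{1/2}$.

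With the estimate in hand the conclusion is immediate. Hypothesis \eqref{NB'} assumes the no-neck property for every $s$ in an interval which, by the above, contains $r$; in particular $\Om$ has no necks of radius $r=1/h(\Om)$, which is exactly \eqref{NB}. Theorem~\ref{theorem:Cheeger formula} then delivers both the representation \eqref{eqn: union} of the maximal Cheeger set and the inner Cheeger formula \eqref{eqn: r}. One should also record that the interval is nonempty: since the inball has area $\pi\,\inr(\Om)^2\le|\Om|$, we get $\inr(\Om)\le(|\Om|/\pi)^{1/2}$ and therefore $\inr(\Om)/2\le\frac{1}{2}(|\Om|/\pi)^{1/2}$.

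I do not anticipate any genuine obstacle here: the entire content of the corollary is the pair of elementary competitor and isoperimetric bounds on $r$, whose purpose is precisely to break the circular dependence on the unknown value $h(\Om)$ in hypothesis \eqref{NB}. Once the two-sided estimate localizes $r$ inside a computable interval, the main theorem does all of the remaining work.
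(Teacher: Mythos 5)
Your proof is correct and takes essentially the same approach as the paper: the inscribed-ball competitor yields $r \ge \inr(\Om)/2$, the isoperimetric inequality together with $|E|\le|\Om|$ yields $r \le \frac12(|\Om|/\pi)^{1/2}$, and hence \eqref{NB'} implies \eqref{NB}, after which Theorem~\ref{theorem:Cheeger formula} gives the conclusion. The only (harmless) difference is that you apply the isoperimetric bound to a Cheeger set rather than to an arbitrary competitor $E\subset\Om$, and you additionally note the nonemptiness of the interval, which the paper leaves implicit.
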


Finally, as an application of Corollary~\ref{cor: cor2}, we give explicit formulas for the computation of the Cheeger constant of a Koch snowflake $K$; see Section \ref{sec:Koch}. As these formulas are not algebraically solvable, we give an approximation that relies on an error estimate between $h(K)$ and $h(K_n)$, where $K_n$ denotes the $n$-th step of the construction of $K$ (see Lemma \ref{lem:approx}). Specifically, we compute the exact value of $h(K_n)$, for $n=2,3,4$. Then,  for larger $n$ we describe the procedure to obtain a numerical approximation of $h(K_n)$. For the sake of simplicity, we analyze with full details the case $n=5$ and finally we show that $h(K) \approx 1.89124548$.
\\

Let us briefly discuss the method of proof for Theorem~\ref{theorem:Cheeger formula}, which consists of three main steps. First, we show that a Cheeger set $E$ of $\Om$ contains a ball of radius $r=1/h(\Om)$. Since the curvature of $E$ is bounded from above by $h(\Om)$ in the viscosity sense (see Definition~\ref{def:BCVS} and Lemma~\ref{lem:DCVC}), this follows from the next theorem.
\begin{theorem}\label{theorem:Pestov and Ionin}
Suppose $E \subset \R^2$ is a Jordan domain with curvature bounded from above by $h>0$ in the viscosity sense. Then $E$ contains a ball of radius $1/h$.
\end{theorem}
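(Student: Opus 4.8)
The statement is equivalent to showing that the inradius of $E$ is at least $r:=1/h$, that is, that the function $d(x):=\dd(x,\pa E)$ attains on $\overline E$ a maximum value $\rho\ge r$. Such a maximum is attained at some \emph{deepest point} $x_0$, which necessarily lies on the cut locus (medial axis) of $E$, since $|\nabla d|=1$ wherever $d$ is differentiable. The plan is to reduce the problem to the classical Pestov--Ionin theorem for smooth curves, for which the conclusion is known: a $C^{1,1}$ Jordan domain whose boundary has (signed, inward) curvature at most $h$ everywhere contains a ball of radius $1/h$. The viscosity hypothesis is tailored precisely to survive the passage to such smooth comparison domains.

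First I would record the local structure forced by the viscosity bound. I would show that curvature bounded above by $h$ in the viscosity sense rules out \emph{convex} corners of $\pa E$ (a convex wedge would admit the forbidden touching configuration of a ball of radius strictly less than $r$), while it places no restriction on \emph{reentrant} corners; consequently $\pa E$ is $C^{1}$ away from reentrant corners and the bound is a genuine one-sided ($h$-semiconcavity) condition there. This is what makes an approximation scheme safe: I would approximate $E$ by $C^{1,1}$ Jordan domains $E_j\to E$ obtained by rounding only the reentrant corners and mollifying the concave portions of $\pa E$. Rounding a concave feature only introduces boundary arcs whose inward curvature is negative, hence automatically at most $h$, so each $E_j$ inherits the global bound curvature $\le h+o(1)$. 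Applying the classical theorem to each $E_j$ yields balls $B_{r_j}(x_j)\subseteq E_j\subseteq \overline E$ with $r_j\to r$; a compactness argument on the centers $x_j$ then produces a ball of radius $r$ contained in $\overline E$, whose interior lies in $E$.

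The main obstacle is genuinely global rather than local. At the deepest point $x_0$ the local curvature of $\pa E$ carries essentially no information: if $x_0$ is equidistant from two antipodal contact points $p_1,p_2$, a direct perturbation of the inscribed ball in the direction orthogonal to $p_1p_2$ shows that it leaves $E$ already to \emph{first} order, for any admissible curvature at $p_1,p_2$, so no lower bound on $\rho$ can be extracted pointwise. The deep input is therefore the interplay between the disk topology of the Jordan domain, the total curvature identity $\int_{\pa E}k\,ds=2\pi$, and the bound $k\le h$, which together force the inner parallel curves $\{d=t\}$ to persist (their signed length $L-2\pi t$ remaining nonnegative) for all $t<r$, and hence $\{d\ge r\}\neq\emptyset$; this is exactly the content of the classical Pestov--Ionin argument. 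I expect the delicate technical point to be the corner-smoothing lemma: one must verify that the approximating domains $E_j$ can be built so as to converge to $E$ while provably creating no convex curvature exceeding $h$ in the limit, even where $\pa E$ is only one-sidedly $C^{1,1}$. An alternative, self-contained route avoids approximation by analyzing the cut locus directly---showing that a single-normal deepest point is a focal point at distance $1/k\ge r$, and ruling out the multi-point case via the tree structure of the cut locus---but this requires reproducing the focal-point analysis for a boundary that is only one-sidedly $C^{1,1}$, which is comparably delicate.
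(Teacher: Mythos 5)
There is a genuine gap, and it sits exactly where you yourself flag it: the corner-smoothing lemma is not a technical verification but the whole difficulty of the theorem, and the structural claim it rests on is unjustified. You assert that the viscosity bound forces $\pa E$ to be ``$C^1$ away from reentrant corners,'' so that one may round the reentrant corners, mollify the concave portions, and invoke the classical Pestov--Ionin theorem on the resulting $C^{1,1}$ domains. But the boundary of a Jordan domain need not be locally a graph anywhere, its singular set need not consist of isolated corners, and the paper's hypothesis \eqref{T} deliberately allows boundaries as wild as Osgood curves of positive Lebesgue measure. The viscosity condition of Definition~\ref{def:BCVS} is a one-sided exclusion (no locally exterior touching disk of radius $<1/h$), and it is simply vacuous at any boundary point admitting no exterior touching disk at all; hence it yields no local parametrization, no countability or discreteness of singular points, and no decomposition of $\pa E$ into ``concave portions'' that could be mollified while keeping the curvature below $h+o(1)$. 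The natural smoothing operations are moreover either curvature-breaking or circular: the morphological opening of $E$ by disks of radius $\delta<1/h$ creates boundary arcs of curvature $1/\delta>h$, while opening by disks of radius exactly $1/h$ presupposes that some ball of radius $1/h$ fits inside $E$, which is the statement being proved. So the reduction to the smooth theorem is not established, and it is unclear how to establish it without first proving something equivalent to the theorem itself.

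The paper proves the result by the route you mention only in passing as an ``alternative'': a direct cut-locus analysis that never claims any regularity of $\pa E$. The substitute for smoothness is Lemma~\ref{lem: dist 1} (two distinct nearby projections force $\dd(x,\pa E)\ge 1$), proved by sliding a comparison arc until it touches $\pa E$ from outside --- this is the only place the viscosity hypothesis is used, together with the local homeomorphism property of Proposition~\ref{prop:Jordan}. This feeds the weak notions of cut and focal point (Definition~\ref{def:cutfocal}), the bound $\dd\ge 1$ at focal points (Lemma~\ref{prop: focal point distance}), the dichotomy of Lemma~\ref{prop: at least one}, and the local structure of the cut locus (Proposition~\ref{lem: structure}); the conclusion then follows from a global topological argument: if every point of $E$ had distance $<1$ from $\pa E$, the cut locus would be a compact finite union of trees whose valence-one vertices contradict Proposition~\ref{lem: structure}. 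Your closing sentence defers precisely this analysis as ``comparably delicate''; in the paper it is not an alternative but the proof, and your primary route does not go through without it.
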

Theorem~\ref{theorem:Pestov and Ionin} generalizes a result known for domains with $C^2$ boundary (see \cite{PeIo59, HowTre95}, cf. \cite[30.4.1]{BZ88}). We could not find a proof of this fact when the curvature is only known to be bounded in a weak sense, so we prove it here. Our proof of Theorem~\ref{theorem:Pestov and Ionin} partially follows the one given in \cite[Section 2]{HowTre95}. However, some key notions and steps in the original proof must be modified to allow for the lack of regularity of $\pa E$ (we refer in particular to the notions of cut point and of focal point of $E$, as well as to the structure properties of the cut locus; see Section \ref{section:3}). 
Apart from being a key tool in the proof of Theorem~\ref{theorem:Cheeger formula}, Theorem~\ref{theorem:Pestov and Ionin} seems interesting in its own right. In particular, it implies the so-called {\it Almgren's isoperimetric principle} in the planar case; see \cite{Almgren86, KrummelMaggi}.

The second step of the proof of Theorem~\ref{theorem:Cheeger formula} is to show that $E$ contains the union of all balls of radius $r=1/h(\Om)$ contained in $\Om$.  We have at our disposal the following \textit{rolling ball lemma} (see \cite[Lemma 2.12]{PraLeo14}): 
\begin{lemma}[Rolling ball]\label{lem:rollingball}
	If $E$ is a maximal Cheeger set of $\Om$ that contains $B_r(x_0)$ for $r = h(\Om)^{-1}$ and for some $x_{0}$, then $E$ contains any ball of the same radius that can be reached by rolling $B_r(x_0)$ inside of $\Om$. 
\end{lemma}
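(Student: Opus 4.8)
The plan is to prove the rolling ball lemma by showing that the ``swept region'' traced out by the rolling ball is itself a Cheeger set, which by maximality of $E$ must then be contained in $E$. Let me set up the key objects. Suppose $B_r(x_0) \subset E$ and let $B_r(x_1) \subset \Om$ be any ball reachable by rolling, meaning there is a continuous path $t \mapsto y(t)$ with $y(0) = x_0$, $y(1) = x_1$, and $B_r(y(t)) \subset \Om$ for all $t$. Define the set
\begin{equation*}
R = \bigcup_{t \in [0,1]} B_r(y(t)).
\end{equation*}
First I would observe that $E \cup R$ is still a competitor contained in $\Om$, so by the definition of the Cheeger constant we have $P(E \cup R) \ge h(\Om)\, |E \cup R|$.

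The heart of the argument will be to show that adding the rolled balls does not increase the ratio $P/|\cdot|$ beyond $h(\Om)$; equivalently, that $E \cup R$ is again a Cheeger set. The clean way to do this is to use the well-known fact that the class of Cheeger sets is closed under union (cited in the excerpt via \cite{PraLeo14}), so it suffices to prove that each individual ball $B_r(y(t))$, together with $E$, forms a Cheeger set. Since $B_r(y(t))$ is an isoperimetric region of its own volume with $P(B_r)/|B_r| = 2/r = 2h(\Om)$, a single ball is generally \emph{not} itself Cheeger; the gain comes precisely from the overlap with the already-present Cheeger set along the continuous path. I would therefore argue by a continuity/connectedness argument on the set
\begin{equation*}
T = \{ t \in [0,1] : B_r(y(t)) \subset E \}.
\end{equation*}
This set is nonempty since $0 \in T$. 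The goal is to show $T = [0,1]$; since $B_r(x_1) \subset E$ would then give the conclusion, and in fact the same argument applied to every reachable ball yields $R \subseteq E$.

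To run the connectedness argument I would show $T$ is both closed and open in $[0,1]$. Closedness follows from the fact that $E$ is closed (a Cheeger set can be taken to be its closure, and $B_r(y(t)) \subset E$ is preserved under limits by continuity of $t \mapsto y(t)$). For openness, fix $t_* \in T$, so $B_r(y(t_*)) \subset E$. For $t$ near $t_*$ the ball $B_r(y(t))$ overlaps $B_r(y(t_*)) \subset E$ in a region of definite volume. The key computation is that $E \cup B_r(y(t))$ has strictly smaller ratio than $h(\Om)$ \emph{unless} the added portion is already in $E$: when we form the union, the perimeter contributed by the new ball is reduced by twice the length of the circular arc lying inside $E$, while the volume increases only by the lens-shaped sliver outside $E$. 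An explicit estimate comparing this arc length to the sliver area — using that the overlap has nonzero measure — shows $P(E \cup B_r(y(t))) < h(\Om)\,|E \cup B_r(y(t))|$ whenever $B_r(y(t)) \not\subset E$, contradicting the minimality of $h(\Om)$. Hence $B_r(y(t)) \subset E$ for all $t$ near $t_*$, giving openness.

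The main obstacle I anticipate is making the openness step fully rigorous at the level of sets of finite perimeter rather than smooth regions: one must justify the perimeter-versus-area accounting for the union $E \cup B_r(y(t))$ using the structure of reduced boundaries and the fact that $\pa E \cap \Om$ consists of circular arcs of radius $r$, so that when the new ball's boundary meets $\pa E$ the two curvatures match and no extra perimeter is created along the shared portion. One also needs to handle the possibility that $\pa B_r(y(t))$ is tangent to $\pa E$ along an arc rather than crossing transversally; here the matching-curvature observation is exactly what prevents a perimeter penalty. Once the local ratio estimate is established, the topological argument closes quickly, and the equivalence between the reachability hypothesis and path-connectedness of the inner parallel set (Remark~\ref{rmk:nobott}) is what will later let us upgrade this lemma, under hypothesis \eqref{NB}, to the statement that $E$ contains \emph{all} balls of radius $r$ contained in $\Om$.
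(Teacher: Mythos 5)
Your proposal has a genuine gap, and it is exactly at the step you identified as the heart of the argument: the openness claim is false. You assert that if $B=B_r(y(t))$ overlaps a ball $B'=B_r(y(t_*))\subset E$ but $|B\setminus E|>0$, then $P(E\cup B)<h(\Om)\,|E\cup B|$. Consider the configuration in which, inside $B$, the set $E$ coincides with $B'$, so that $\pa E\cap B$ is the arc $\pa B'\cap B$; this is fully compatible with Proposition~\ref{prop:proprietaCS}, since that arc has radius $r$, length less than $\pi r$, and the correct sign of curvature. Writing $\e=|y(t)-y(t_*)|$ and $s=\e/(2r)$, an exact computation (which agrees with your own ``reduced by twice the arc inside $E$'' bookkeeping) gives
\[
P(E\cup B)-P(E)=4r\arcsin s,\qquad h(\Om)\,|B\setminus E|=2r\left(\arcsin s+s\sqrt{1-s^2}\right),
\]
and since $\arcsin s>s\sqrt{1-s^2}$ for $s\in(0,1]$, the union has ratio strictly \emph{larger} than $h(\Om)$: the first-order terms (each $\approx 2\e$) cancel, and the second-order correction has the wrong sign for your argument. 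So no contradiction with the minimality of $h(\Om)$ arises, and openness does not follow. This is not a removable technicality about reduced boundaries: the configuration genuinely occurs. In the Pinocchio set of \cite[Example 4.6]{PraLeo14} (cited in this paper precisely as the example of non-uniqueness under \eqref{T} and \eqref{NB}), the minimal Cheeger set $E_{\rm min}$ contains a ball that can be rolled into the nose, yet the rolled balls are not contained in $E_{\rm min}$. This also exposes a structural flaw: your openness argument never uses the maximality of $E$, so if correct it would prove the lemma for \emph{every} Cheeger set — which is false by that same example. The ``gain'' from rolling only appears when the entire swept tube is added at once (a tube of width $2r$ and length $L$ adds perimeter $2L$ and area $2rL$, i.e.\ ratio exactly $h(\Om)$, never strictly less), and converting ``$E\cup U$ is a Cheeger set'' into ``$U\subseteq E$'' is exactly where maximality enters.

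For comparison: the paper does not reprove Lemma~\ref{lem:rollingball} at all. It quotes it from \cite[Lemma 2.12]{PraLeo14}, whose proof requires the curve of centers to be $C^{1,1}$ with curvature bounded by $1/r$ and, as the paper states, uses that assumption in an essential way (the tube is then handled globally, via its parallel-curve structure and the fact that free arcs of $\pa E$ have endpoints on $\pa\Om$). The new contribution of this paper is Theorem~\ref{lem:eraser}, proved in Section~\ref{sec: eraser}, which replaces an arbitrary continuous curve of centers by a $C^{1,1}$ curve with curvature at most $1/r$, with the same endpoints and a smaller $r$-neighborhood, so that the cited lemma applies. Any repair of your proof would have to reinstate the three ingredients you dropped: the global structure of $\pa E$, the maximality of $E$, and some control on the rolling curve; a purely local perimeter-versus-area balance across the advancing lune cannot work, as the computation above shows.
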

In the original version of the lemma proven in \cite{PraLeo14}, the curve of centers along which the ball is rolled was required to be of class $C^{1,1}$ and to have curvature bounded by $1/r$, and the proof used this assumption in an essential way. In order to relax this requirement, we show that the same property can be deduced when the curve of centers is only continuous. To this aim, we prove the following theorem, which essentially states that every $r$-tubular neighborhood of a continuous curve $\g\subset\R^2$ contains a curve $\sigma$ admitting a regular parametrization of class $C^{1,1}$, with same endpoints as $\g$ and $L^\infty$-norm of the curvature bounded by $1/r$, such that the $r$-neighborhood of $\sigma$ is contained in the $r$-neighborhood of $\g$. 
\begin{theorem}\label{lem:eraser}
Given any continuous curve $\g:[0,1]\to\R^{2}$ and $r>0$, and defining $U_{\g,r} = \bigcup_{t\in [0,1]} B_{r}(\g(t))$, there exists a curve $\sigma:[0,1]\to \R^{2}$ of class $C^{1,1}$ with curvature bounded by $1/r$ such that 
\[
\sigma(0) = \g(0),\quad \sigma(1) = \g(1),\quad U_{\sigma,r} \subset U_{\g,r}\,.
\]
\end{theorem}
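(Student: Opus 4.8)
The plan is to construct $\sigma$ as a shortest path inside the \emph{inner parallel set} of $U := U_{\g,r}$. Write $U^{r} = \{x : \dd(x,\R^2\setminus U)\ge r\} = \{x : B_r(x)\subset U\}$. The key observation is that $\g(t)\in U^{r}$ for every $t$, since by definition $B_r(\g(t))\subset U$; in particular $U^{r}$ is nonempty, contains both endpoints $\g(0),\g(1)$, and the connected component $Y$ of $U^{r}$ containing the compact connected set $\g([0,1])$ joins them. Moreover, if $\sigma$ is any curve with image in $U^{r}$, then $U_{\sigma,r}=\bigcup_s B_r(\sigma(s))\subset U = U_{\g,r}$ automatically; thus it suffices to produce a $C^{1,1}$ curve in $U^{r}$ from $\g(0)$ to $\g(1)$ with curvature bounded by $1/r$.

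First I would establish existence of a length-minimizing curve $\sigma$ in $Y$ joining the endpoints. Since $\g([0,1])$ is compact, $U$ is bounded and hence $U^{r}$ is compact. To apply the direct method I must exhibit at least one rectifiable competitor; as $\g$ is merely continuous this is not immediate. I would obtain one by enlarging the target slightly: for small $\eta>0$ set $U^{r-\eta}=\{x:\dd(x,\R^2\setminus U)\ge r-\eta\}\supseteq U^{r}$, observe that $\g([0,1])$ lies in the \emph{interior} of $U^{r-\eta}$ (because $\dd(\g(t),\R^2\setminus U)\ge r> r-\eta$), and conclude that $\g(0),\g(1)$ are joined by a polygonal, hence rectifiable, path in this open set. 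A minimizing sequence of curves in $Y$, parametrized with constant speed on $[0,1]$, is then uniformly Lipschitz; Arzel\`a--Ascoli together with lower semicontinuity of length produce a shortest path $\sigma\subset Y$.

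The heart of the argument is the regularity and curvature bound for $\sigma$. Here I would use that $\pa U^{r}$ satisfies an \emph{exterior ball condition of radius $r$}: every $x\in\pa U^{r}$ has $\dd(x,\R^2\setminus U)=r$, so a nearest point $y\in\R^2\setminus U$ satisfies $x\in\pa B_r(y)$ and $B_r(y)\cap U^{r}=\emptyset$, i.e.\ $U^{r}$ lies outside an $r$-ball touching it at $x$. A first-variation/local analysis of shortest paths then shows that $\sigma$ is a straight segment wherever it lies in the interior of $U^{r}$, and that it can follow $\pa U^{r}$ only along arcs of circles of radius $r$ whose center lies in the complement of $U^{r}$; these arcs have curvature exactly $1/r$ and bend away from $U^{r}$. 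Segment--arc junctions must be tangential (otherwise the path could be shortened), so $\sigma$ is $C^1$, and since its curvature takes only the values $0$ and $1/r$ it is in fact $C^{1,1}$ with $\|\ka_\sigma\|_\infty\le 1/r$. Reparametrizing $\sigma$ by a constant multiple of arc length on $[0,1]$ yields the desired curve.

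I expect the main obstacle to be making this local structure rigorous under the low regularity of $\pa U^{r}$. In particular I must rule out that a shortest path turns at a corner of $\pa U^{r}$ with infinite curvature: at a point where two nearest-point $r$-balls meet, the exterior ball condition forces the interior angle of $U^{r}$ to be at most $\pi$, since a reflex corner cannot admit a touching exterior $r$-ball. Such corners are therefore convex as seen from $U^{r}$ and are never turning points of a minimizer. A careful treatment of this dichotomy---smooth boundary arcs of curvature $1/r$ versus convex corners---together with the existence step above, constitutes the bulk of the work; the containment $U_{\sigma,r}\subset U_{\g,r}$ and the endpoint conditions then follow immediately.
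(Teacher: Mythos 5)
Your strategy is essentially the paper's \emph{Step one}: minimize length among curves lying in the inner retract $Y=\{x:\ B_r(x)\subset U_{\g,r}\}$, use the exterior $r$-ball condition at every point of $\pa Y$ (your verification of that condition is correct), and deduce the curvature bound from a local tangent-ball analysis. The genuine gap is in your existence step, and it is exactly the point the paper has to work around. To run the direct method in $Y$ you need at least one \emph{admissible} rectifiable competitor, i.e.\ a finite-length curve joining $\g(0)$ to $\g(1)$ inside $Y$. The competitor you build is a polygonal path in the interior of the strictly larger set $U^{r-\eta}=\{x:\ \dd(x,\R^{2}\setminus U_{\g,r})\ge r-\eta\}$; its points may have distance to $\R^{2}\setminus U_{\g,r}$ anywhere in $(r-\eta,r)$, so the path need not lie in $U^{r}=Y$ and is not admissible for your minimization. (Minimizing in $U^{r-\eta}$ instead would not help: a curve $\sigma\subset U^{r-\eta}$ only gives $U_{\sigma,r-\eta}\subset U_{\g,r}$, not $U_{\sigma,r}\subset U_{\g,r}$, and the exterior ball condition there only yields curvature $\le 1/(r-\eta)$.) When $\g$ is merely continuous, the only path you know to lie in $Y$ is $\g$ itself, which may have infinite length, so the infimum of your problem could a priori be $+\infty$. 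This is precisely why the paper splits the proof in two: Step one assumes $\g$ rectifiable (so $\g$ itself is a competitor); Step two treats general continuous $\g$ by replacing it with inscribed piecewise-linear curves $\g_{\e}$ at radius $r-\e$, applying Step one to each, and extracting a $C^{1,1}$ limit by Ascoli--Arzel\`a --- which in turn requires a uniform bound on the speeds of the minimizers $\sigma_{\e}$, proved there by a separate shortcut/contradiction argument. Your proposal has no substitute for this limiting procedure, and the non-rectifiable case is the one actually needed in the application, since the no-neck hypothesis only supplies a continuous curve of centers.

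A secondary inaccuracy: the structural dichotomy you lean on for regularity (segments in the interior; arcs of curvature exactly $1/r$ along $\pa U^{r}$; ``curvature takes only the values $0$ and $1/r$'') is false in general. If $\g$ is an arc of a circle of radius $R>r$, then $Y$ is that arc itself and the minimizer is the arc, with curvature $1/R\in(0,1/r)$; more generally $\pa Y$ is just a closed set satisfying the exterior ball condition and need not decompose into circular arcs and corners. The paper's Step one avoids any such classification: it proves a local \emph{bilateral} tangent $r$-ball condition at every point of the minimizer, by showing that near a contact point $\pa Y$ is a Lipschitz graph and that the minimizer coincides with the least concave function above it (tangent ball from below because contact points lie on $\pa Y$; tangent balls from above by concavity). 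That is the rigorous route to $C^{1,1}$ with curvature at most $1/r$, and is what your ``first-variation/local analysis'' would need to become.
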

The second step is thus shown by coupling Lemma \ref{lem:rollingball} and Theorem~\ref{lem:eraser} with the no-necks assumption \eqref{NB}.

In the final step of the proof, we show that $E$ is in fact equal to the union of all balls of radius $1/h(\Om)$ contained in $\Om$. 
The properties of cut and focal points of $E$ are also the key in proving this step. 
%the second step of the proof of Theorem~\ref{theorem:Cheeger formula}, where it is proved that the union of balls of radius $r=1/h(\Om)$ contained in $\Om$ is the entirety of the maximal Cheeger set $E$ of $\Omega$.
 Essentially, if $G$ denotes the complement in $E$ of this union of balls, and if one assumes that $G$ is nonempty, then a contradiction is reached as the cut locus must have a nonempty intersection with $G$ and, necessarily, this intersection must be made of cut points whose distance from $\pa E$ is strictly less than $r$. Finally, to prove the inner Cheeger formula \eqref{eqn: r}, we make use of the Steiner formulas.
\\

The paper is organized in the following way. Section~\ref{sec: preliminaries} includes preliminary results that will be needed in the paper. We prove Theorems~\ref{theorem:Pestov and Ionin} and \ref{lem:eraser} in Sections~\ref{section:3} and \ref{sec: eraser} respectively. Section~\ref{sec: proof of main} is dedicated to showing Theorem~\ref{theorem:Cheeger formula} and Corollary~\ref{cor: cor2}. Finally, we compute the Cheeger constant of the Koch snowflake in Section~\ref{sec:Koch}.

\section{Preliminaries}\label{sec: preliminaries}

\subsection{Curvature bounds in the variational and viscosity sense}

\begin{definition}[Curvature bounded from above in the variational sense] Given $h \in L^1_{loc}(\R^{2})$, a set of locally finite perimeter $E\subset \R^2$ has variational curvature bounded above by $h$ if there exists $r_0>0$ such that, for all $0<r<r_{0}$, $x_0\in \R^2$, and $F \subset E$ with $F\Delta E \subset \subset B_r(x_0)$, we have
\[
 P(E; B_r(x_0 )) \leq P(F ; B_r(x_0)) + \int_{E\setminus F} h(x) \, dx.
\] 
\end{definition}

\begin{lemma}\label{lemma:curvature bound}
A Cheeger set $E$ of $\Om$ has variational curvature bounded above by $h(\Om)$.
\end{lemma}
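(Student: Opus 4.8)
The plan is to verify the variational curvature bound directly from the definition of a Cheeger set. Recall that a Cheeger set $E$ of $\Om$ minimizes the ratio $P(\cdot)/|\cdot|$ over subsets of $\Om$. The key observation is that such a minimizer satisfies a useful comparison principle: if we replace $E$ by a competitor $F \subseteq E$ that differs from $E$ only inside a small ball, then we can compare the two configurations using the minimality of $E$. The constant that appears should be precisely $h = h(\Om)$, matching the mean curvature of $\pa E \cap \Om$.

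First, I would fix $r_0 > 0$ arbitrary (in fact any $r_0$ works here, since the argument is purely variational and local). Given $0 < r < r_0$, a point $x_0 \in \R^2$, and a competitor $F \subseteq E$ with $F \Delta E \subset\subset B_r(x_0)$, I note that $F \subseteq E \subseteq \Om$, so $F$ is itself an admissible competitor in the Cheeger problem \eqref{eqn: Cheeger pb}. By minimality of $E$ we have $h(\Om) = P(E)/|E| \le P(F)/|F|$, which rearranges to
\[
P(E)\,|F| \le P(F)\,|E|\,.
\]
Writing $|F| = |E| - |E \setminus F|$ (using $F \subseteq E$) and $P(E)/|E| = h(\Om)$, this becomes
\[
P(E) \le P(F) + h(\Om)\,|E\setminus F|\,.
\]

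Second, I would localize this global perimeter inequality to the ball $B_r(x_0)$. Since $F$ and $E$ agree outside $B_r(x_0)$, their perimeters agree outside this ball; more precisely, because the modification is compactly contained in $B_r(x_0)$, one has $P(E) - P(E; B_r(x_0)) = P(F) - P(F; B_r(x_0))$, so the global difference $P(E) - P(F)$ equals the localized difference $P(E; B_r(x_0)) - P(F; B_r(x_0))$. Likewise $|E \setminus F| = \int_{E \setminus F} dx$, and since $h(\Om)$ is a constant we may write $h(\Om)\,|E\setminus F| = \int_{E\setminus F} h(\Om)\, dx$, which matches the integral appearing in the definition with $h \equiv h(\Om)$. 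Combining these yields exactly
\[
P(E; B_r(x_0)) \le P(F; B_r(x_0)) + \int_{E\setminus F} h(\Om)\, dx\,,
\]
as desired. The main point requiring care is the localization step, namely justifying that the portions of $\pa^* E$ and $\pa^* F$ lying outside $B_r(x_0)$ coincide and contribute equally to the perimeter; this is standard for sets of finite perimeter once one knows $F\Delta E \subset\subset B_r(x_0)$, but it is the only nontrivial measure-theoretic ingredient, and I would invoke the locality of the perimeter (the reduced boundaries agree $\Hau^1$-almost everywhere outside the ball). Everything else is elementary algebra from the minimality ratio, and the constant curvature bound $h(\Om)$ emerges naturally, consistent with the constant mean curvature property mentioned in the introduction.
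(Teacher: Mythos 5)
Your proof is correct and follows essentially the same route as the paper: both arguments use only the minimality of the ratio $P(E)/|E| = h(\Om)$ against competitors $F \subseteq E$ and rearrange it into $P(E) \le P(F) + h(\Om)\,|E\setminus F|$ (the paper phrases this via the functional $I(F) = P(F) - h(\Om)|F|$, you via cross-multiplication, which is the same algebra). Your extra paragraph justifying the localization to $B_r(x_0)$ via locality of the perimeter is a detail the paper leaves implicit, not a different method.
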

\begin{proof}
A Cheeger set $E$ minimizes the energy 
\[
I(E) = P(E) - h(\Om) |E|
\]
among all sets $F \subset E$. Rearranging the inequality $I(E) \leq I(F),$ we find
\[
P(E) \leq P(F) + h(\Om)(|E| - |F|) = P(F) + \int_{E\setminus F} h(\Om) \, dx.
\]
\end{proof} 

Up to a modification by a set of Lebesgue measure zero, we may assume that $\overline {\pa^*E}= \pa E$ for a set of finite perimeter $E$. We will make this assumption throughout the paper. 
\begin{definition}[Curvature bounded from above in the viscosity sense]\label{def:BCVS} Given a constant $h>0$, a set $E\subset \R^2$ has curvature bounded above by $h$ at $x \in \pa E$ in the viscosity sense if the following holds. Suppose $A$ is a circular arc that locally touches $E$ from outside at $x$, that is, $x \in \pa E$ is contained in the relative interior of $A\subset \pa B_{r}(y)$ and, for some $\e>0$, one has $E\cap B_\e(x) \subset \overline{B}_r(y)$. Then $A$ is an arc of curvature at most $h$, that is, $r\geq 1/h$.
\end{definition}

\begin{lemma}\label{lem:DCVC}
If $E$ has variational mean curvature bounded from above by a constant $h>0$, then $E$ has curvature bounded from above by $h$ in the viscosity sense at every point $x \in \pa E$. 
\end{lemma}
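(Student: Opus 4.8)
The plan is to argue by contradiction. Fix a point $x\in\partial E$ and suppose some circular arc $A\subset\partial B_\rho(y)$ touches $E$ from outside at $x$, so that $E\cap B_\e(x)\subset\overline B_\rho(y)$, and yet $\rho<1/h$. After a rigid motion I may assume $x=0$ and $y=(0,-\rho)$, so that the forbidden ball lies below $x$ and $E$ is locally contained in the disk $\overline B_\rho(y)$. Since $\rho<1/h$, I fix a radius $R$ with $\rho<R<1/h$. For a small parameter $\eta>0$, I shave off the thin crescent
\[
C=\overline B_\rho(y)\setminus \overline B_R(z),\qquad z=(0,-\eta-R),
\]
cut out by the less curved sphere $\partial B_R(z)$, whose top sits at height $-\eta$, just below $x$. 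Explicitly I set $F=E\setminus C$ and $G=E\cap C=E\setminus F$. Because $R>\rho$, the two spheres cross at two points at horizontal distance $O(\sqrt\eta)$ from $x$, so $C$ (hence $G$) is compactly contained in a ball $B_r(x)$ with $r\to0$ as $\eta\to0$; thus $F\subset E$ is an admissible competitor in the variational inequality for every small $\eta$.

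The perimeter bookkeeping is clean because, locally, $E\subset\overline B_\rho(y)$ forces $G=E\setminus\overline B_R(z)$: removing $G$ deletes the part $\Sigma=\partial^*E\cap\{|p-z|>R\}$ of $\partial E$ lying in the crescent and creates the cap $\Gamma=E^{(1)}\cap\partial B_R(z)$ of the cutting sphere (the arc on $\partial B_\rho(y)$ plays no role), whence $P(E;B_r)-P(F;B_r)=\mathcal{H}^1(\Sigma)-\mathcal{H}^1(\Gamma)$. The heart of the argument is a calibration. Since $z\notin\overline G$, the radial field $X(p)=(p-z)/|p-z|$ is $C^1$ on $\overline G$ with $|X|\equiv1$ and $\DIV X=1/|p-z|$. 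Applying the Gauss--Green theorem on the set of finite perimeter $G$, and using $X\cdot\nu_G=-1$ on $\Gamma$ together with $X\cdot\nu_G\le|X|=1$ on $\Sigma$, I obtain
\[
\int_G \frac{dp}{|p-z|}=\int_{\partial^*G}X\cdot\nu_G\,d\mathcal{H}^1 = -\mathcal{H}^1(\Gamma)+\int_\Sigma X\cdot\nu_G\,d\mathcal{H}^1\le \mathcal{H}^1(\Sigma)-\mathcal{H}^1(\Gamma).
\]
Every point of $G\subset C$ satisfies $R\le|p-z|\le R+c\,\eta$ for a constant $c=c(\rho,R)$, so the left-hand side is at least $|G|/(R+c\eta)$, and therefore $\mathcal{H}^1(\Sigma)-\mathcal{H}^1(\Gamma)\ge |G|/(R+c\eta)$.

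To conclude I let $\eta$ be small. Since $x\in\partial E=\overline{\partial^*E}$ lies strictly outside $\overline B_R(z)$, every neighborhood of $x$ meets $E$ in positive measure inside the open set $\{|p-z|>R\}$, so $|G|>0$. Combining the last two estimates,
\[
P(E;B_r)-P(F;B_r)=\mathcal{H}^1(\Sigma)-\mathcal{H}^1(\Gamma)\ge \frac{|G|}{R+c\eta}>h\,|G|=\int_{E\setminus F}h\,dp
\]
as soon as $\eta$ is small enough that $(R+c\eta)^{-1}>h$, which is possible precisely because $R<1/h$. This contradicts the variational curvature bound, so no exterior touching arc of radius less than $1/h$ can exist; that is, $\rho\ge1/h$ at every $x\in\partial E$, which is the viscosity bound. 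I expect the main obstacle to be exactly the low regularity of $\partial E$: a cut by a chord, or even by a fixed arc, either yields the wrong radius $1/(2h)$ or forces $\partial E$ to be a graph, whereas the exact identity above delivers the sharp radius $1/h$ for an arbitrary set $E\subset\overline B_\rho(y)$ of finite perimeter, with the choice of the unit radial field $X$ being what makes the estimate both sharp and insensitive to the shape of $\partial E$.
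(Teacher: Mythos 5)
Your proof is correct, and while it rests on the same mechanism as the paper's proof --- cutting a thin crescent off $E$ near the touching point and using the divergence theorem for a unit vector field to show that the perimeter saved exceeds $h$ times the area removed --- the implementation is genuinely different. The paper (after normalizing $h=1$) foliates a strip by vertical translates $A_t$ of a circle of radius exactly $1/h$ and calibrates with the field $g$ normal to this foliation, whose divergence is identically $h$; since the divergence only \emph{matches} $h$, the resulting comparison is a priori non-strict, and the paper must argue separately that its inequality \eqref{eq:stimastretta} is strict (equality would force $\pa E$ to lie on leaves of the foliation, contradicting $x_0\in\pa E$). You instead exploit the room between $\rho$ and $1/h$: you cut once with a circle of intermediate radius $R\in(\rho,1/h)$ and calibrate with the radial field $X=(p-z)/|p-z|$, whose divergence $1/|p-z|\ge 1/(R+c\eta)>h$ on the crescent, so strictness is automatic; the price is having to check $|G|>0$, which you correctly deduce from the paper's standing convention $\overline{\pa^{*}E}=\pa E$. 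One measure-theoretic caveat: your identity $P(E;B_r)-P(F;B_r)=\Hau^1(\Sigma)-\Hau^1(\Gamma)$ is not exact in general, because $\pa^{*}E$ may charge the cutting circle $\pa B_R(z)$ itself; a portion of $\pa^{*}E$ on $\pa B_R(z)$ with $\nu_E=-X$ contributes to $P(E;B_r)$ but neither to $P(F;B_r)$ nor to your $\Sigma$. What is true is the inequality $P(E;B_r)-P(F;B_r)\ge\Hau^1(\Sigma)-\Hau^1(\Gamma)$; correspondingly, in the Gauss--Green step that same portion belongs to $\pa^{*}G$ with $X\cdot\nu_G=-1$, so your bound $\int_G \DIV X\,dp\le\Hau^1(\Sigma)-\Hau^1(\Gamma)$ also survives. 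Both corrections go in the favorable direction, so the contradiction stands as written.
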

\begin{proof}
Up to rescaling we may assume that $h=1$, i.e.,
\begin{equation}\label{eq:distrimin}
P(E) - |E| \le P(F) - |F|\qquad \text{for all $F\subset E$.}
\end{equation}
Arguing by contradiction, we assume that there exist $x_{0},y_{0}\in \R^{2}$, $\e>0$ and $0<r<1$, such that $x_{0}\in \pa E \cap \pa B_{r}(y_{0})$ and $\pa E\cap B_{\e}(x_{0})\setminus \{x_{0}\} \subset B_{r}(y_{0})$. Up to an isometry, we have $y_{0}=(0,0)$ and $x_{0} = (0,r)$. In the infinite strip $S = (-1,1)\times \R$ we consider the unit vector field $g(x_{1},x_{2}) = (x_{1},\sqrt{1-x_{1}^{2}})$ with divergence constantly equal to one. Consider the one-parameter family of unit half-circles
\[
A_{t} = \left\{(x_{1},x_{2}):\ |x_{1}|<1,\ x_{2} = t+\sqrt{1 - x_{1}^{2}}\right\},\quad t\in \R\,,
\]
which foliates $S$ (notice that $g$ is normal to $A_{t}$ for all $t$). Let $t_{0}$ be such that $x_{0}\in A_{t_{0}}$. For any $t\in (t_{0}-\e,t_{0})$ we set
\[
E_{t} = \big(E\setminus B_{\e}(x_{0})\big) \cup \big( E \cap B_{\e}(x_{0})\cap \left\{x\in S:\ x\text{ lies below }A_{t}\right\}\big)\,.
\]
Note that $E_{t_{0}} = E$ because $A_{t_{0}}$ is outside $B_{r}$, while in general $E_{t}\subset E$. By the divergence theorem, for almost every $t\in (t_{0}-\e,t_{0})$ we get
\[
|E\setminus E_{t}| = \int_{E\setminus E_{t}} \DIV g = \int_{\pa_{t,+}E\cap B_{\e}(x_{0})} g\cdot \nu_{E} - \Hau^{1}(E\cap A_{t}\cap B_{\e}(x_{0}))\,,
\]
where $\pa_{t,+}E$ denotes the set of points in $\pa E$ that belong to $S$ and stay below $A_{t}$. Since $|g| = 1$, this last computation shows in particular that
\begin{equation}\label{eq:stimastretta}
|E\setminus E_{t}| < \Hau^{1}(\pa_{t,+}E\cap B_{\e}(x_{0})) - \Hau^{1}(E\cap A_{t}\cap B_{\e}(x_{0}))\,.
\end{equation}
Notice that the previous inequality is always strict since otherwise each connected component of $\pa_{t,+}E$ is contained in some arc $A_{t'}$, but then necessarily $t'=t$ and thus $x_{0}$ cannot be a point of $\pa E$, that is, a contradiction. Now we set 
\[
a = \Hau^{1}(E\cap A_{t}\cap B_{\e}(x_{0})),\qquad b = \Hau^{1}(\pa_{t,+}E\cap B_{\e}(x_{0}))\,,
\]
and estimate $P(E_{t})$ and $|E_{t}|$ using \eqref{eq:stimastretta}:
\[
|E_{t}| = |E| - |E\setminus E_{t}| > |E| - (b-a)\,, \qquad 
P(E_{t}) = P(E) - (b - a)\,.
\]
Together, these imply that
\[
P(E_{t}) - |E_{t}| < 
%P(E) - (b-a) - |E| +(b-a) = 
P(E) - |E|\,, 
\]
which contradicts \eqref{eq:distrimin}. 
\end{proof}

\subsection{Regularity, area bounds, and topological properties of planar Cheeger sets}
$ $

We start recalling some well-known facts about planar Cheeger sets (see, for instance, \cite{PraLeo14}).
\begin{proposition}\label{prop:proprietaCS}
	Let $\Omega \subset \R^2$ be open, bounded and connected, and let $E$ be a Cheeger set of $\Omega$. Then the following properties hold:
	\begin{itemize}
		\item[(i)] the free boundary of $E$, i.e. $\pa E \cap \Omega$, is analytic and has constant curvature equal to $h(\Omega)$, hence $\pa E \cap \Omega$ is a union of arcs of circle of radius $r=h^{-1}(\Omega)$;
		\item[(ii)] any arc in $\pa E \cap \Omega$ cannot be longer than $\pi r$;
		\item[(iii)] if an arc in $\pa E \cap \Omega$ meets $\pa \Omega$ at a regular point of $\pa \Omega$, then they must meet tangentially;
		\item[(iv)] the area of $E$ is bounded from below as follows:
		\begin{equation}\label{eq: bound on volume}
		|E| \geq \pi \left (\frac{2}{h(\Omega)} \right)^2\,.
		\end{equation}
	\end{itemize}
\end{proposition}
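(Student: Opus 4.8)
The plan is to treat the four properties largely separately, obtaining (i) from regularity theory, (iv) from the isoperimetric inequality, and (iii) and (ii) from explicit cut-and-paste competitors that exploit the minimality of $E$.

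For (i), the key observation is that a Cheeger set minimizes $F\mapsto P(F)-h(\Om)|F|$ among all $F\subseteq\Om$: indeed $P(F)\ge h(\Om)|F|$ for every admissible competitor by the definition of $h(\Om)$, with equality precisely on Cheeger sets (cf.\ Lemma~\ref{lemma:curvature bound}). Consequently $E$ is a $\Lambda$-minimizer of perimeter inside $\Om$ with $\Lambda=h(\Om)$, the term $-h(\Om)|F|$ being a volume perturbation of lower order. I would then invoke the regularity theory for such almost-minimizers (De~Giorgi): away from $\pa\Om$, the reduced boundary $\pa E\cap\Om$ is a $C^{1,\al}$ curve with empty singular set in the plane, and elliptic (Schauder) bootstrapping upgrades this to analyticity. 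The Euler--Lagrange equation forces the curvature to be constant and equal to the multiplier $h(\Om)$; since the only planar curves of constant curvature are circular arcs, each component of $\pa E\cap\Om$ is an arc of radius $r=1/h(\Om)$.

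Property (iv) is then immediate from the isoperimetric inequality: writing $h(\Om)=P(E)/|E|$ and using $P(E)\ge 2\sqrt{\pi|E|}$ gives $h(\Om)\ge 2\sqrt{\pi/|E|}$, that is $|E|\ge 4\pi/h(\Om)^2=\pi(2/h(\Om))^2$. For (iii), suppose an arc $\g\subset\pa E\cap\Om$ met $\pa\Om$ at a regular point $x$ non-tangentially. If $E$ formed a convex corner at $x$ (interior angle $<\pi$), cutting the corner with a short chord at scale $\e$ produces a competitor $F\subset E\subseteq\Om$ with $P(E)-P(F)\sim c\,\e$ while $|E|-|F|\sim c'\e^2$; hence $P(F)/|F|<h(\Om)$ for $\e$ small, contradicting minimality. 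A reflex corner (interior angle $>\pi$), on the other hand, would force $E$ to cross the regular curve $\pa\Om$ and thus violate $E\subseteq\Om$. The only remaining possibility is tangential contact.

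The most delicate point is (ii), which I expect to be the main obstacle. Assume an arc $\g$ of radius $r$ subtends a central angle $\theta>\pi$ with center $c$; since the free boundary curves toward the interior of $E$, the set $E$ lies locally on the side of $c$. A naive replacement of $\g$ by the chord through its endpoints fails, because it removes too much area: a direct computation shows that the ratio of perimeter gain to area loss drops below $h(\Om)=1/r$, so the Cheeger ratio would actually increase. I would therefore use a \emph{volume-preserving} competitor, reflecting a portion of $E$ across a suitable diameter of the osculating circle so that area is conserved while the part of $\g$ in excess of a semicircle is replaced by a strictly shorter curve; admissibility is ensured by localizing the modification in a neighborhood of $\g$ where $\pa E=\g$, which keeps the competitor inside $\Om$. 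Checking that the reflected region indeed lands inside $E$ and that the net effect strictly decreases perimeter at fixed volume is the crux; once verified, it contradicts the minimality of $E$ and forces $\theta\le\pi$, hence length at most $\pi r$.
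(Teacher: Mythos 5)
Your treatments of (i), (iii) and (iv) are sound and essentially the standard arguments (note that the paper itself does not prove this proposition at all: it recalls it from \cite{PraLeo14}, so there is no internal proof to compare against). The review therefore hinges on (ii), and there you have a genuine gap, which you yourself flag: the step you call ``the crux'' is exactly what is missing, and the mechanism you propose does not survive scrutiny. In the Cheeger problem every competitor must satisfy $F\subseteq\Om$, and a reflection of a portion of $E$ across a diameter of the osculating circle is \emph{not} a local modification near $\g$: the reflected portion can be large and can land anywhere, so ``localizing the modification in a neighborhood of $\g$'' cannot ensure either $F\subseteq \Om$ or that the reflected mass lands inside $E$. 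Since $E$ is only known to coincide with the disk $D=B_r(c)$ \emph{locally} near $\g$ (the global geometry of $E$, and of $\pa\Om$, near $D$ is completely uncontrolled --- $\pa\Om$ may even intrude into $D$), no reflection-type or symmetrization competitor can be justified with the information available. Your preliminary observation is, however, correct: replacing the arc by its chord does fail when the central angle barely exceeds $\pi$, so the difficulty you identified is real.

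The step can be closed by a purely one-sided competitor $F\subset E$, which is automatically admissible: one shaves a thin crescent off the long arc. Suppose $\g\subset\pa B_r(c)$ spans a central angle $\pi+2\al$ with $\al>0$, and let $e_2$ denote the unit vector from $c$ toward the midpoint of $\g$. Set $D_t=B_r(c-te_2)$ and $C=D\setminus D_t$. For $t$ small, every point of $C$ lies within distance $t$ of a compact subarc of $\g$, where $E$ coincides with $D$ (by (i), $E$ lies on the center side of $\g$, the curvature being positive with respect to the outer normal); hence $C\subset E$ and $F_t=E\setminus C\subseteq\Om$ is admissible. Writing $s=\arcsin\bigl(t/(2r)\bigr)$ and $u=t/(2r)$, the modification deletes the portion of $\g$ bounding $C$, of length $r(\pi+2s)$ (this is where the hypothesis enters: one needs $s\le\al$ for this arc to sit inside $\g$), creates the new arc $\pa D_t\cap D$ of length $r(\pi-2s)$, and removes the area $|C|=2r^2s+\tfrac{t}{2}\sqrt{4r^2-t^2}$. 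Therefore
\[
\Bigl(P(F_t)-\tfrac1r|F_t|\Bigr)-\Bigl(P(E)-\tfrac1r|E|\Bigr)\ \le\ -4rs+\tfrac1r|C|\ =\ 2r\bigl(u\sqrt{1-u^2}-\arcsin u\bigr)\ <\ 0\,,
\]
and since $P(E)-\tfrac1r|E|=0$ this gives $P(F_t)/|F_t|<1/r=h(\Om)$, a contradiction. Note that the gain is of \emph{third} order in $t$: this is precisely why the chord replacement, and indeed any competitor estimated only to first or second order (such as cutting the half-disk along a diameter), cannot work, and why the crescent must hug an arc strictly longer than a half-circle.
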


\begin{definition}\label{def:lochomeo}
Let $\Omega\subset \R^{2}$ be an open bounded set. We say that $\pa \Omega$ is locally homeomorphic to an interval if there exist $r_{0}>0$ and a modulus of continuity $\omega_{0}$ such that for any $y_{1},y_{2}\in \pa \Omega$ with $|y_{1}-y_{2}|<r_{0}$, one can find an open set $U$ containing $y_{1},y_{2}$ with $\diam(U)\le \omega_{0}(|y_{1}-y_{2}|)$ such that $\pa \Omega \cap U$ is homeomorphic to an open interval.
\end{definition}

The following proposition shows that the boundary of every Jordan domain is locally homeomorphic to an interval in the sense of Definition \ref{def:lochomeo}.
\begin{proposition}\label{prop:Jordan}
If $\Omega$ is a Jordan domain, then $\pa \Omega$ is locally homeomorphic to an interval.
\end{proposition}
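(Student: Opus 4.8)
The plan is to exploit that, by definition, the boundary of a Jordan domain is a Jordan curve, i.e. the homeomorphic image of a circle, and to upgrade this qualitative homeomorphism into the quantitative local statement of Definition~\ref{def:lochomeo} by means of uniform continuity. Fix a homeomorphism $\g:\Sph^1\to\pa\Om$, where we equip $\Sph^1$ with its arc-length distance $d$ (total length $2\pi$). Since $\Sph^1$ is compact and $\R^2$ is Hausdorff, $\g$ is a homeomorphism onto $\pa\Om$, and both $\g$ and $\g^{-1}$ are uniformly continuous. Let $\om_2$ be a modulus of continuity for $\g$, so that $|\g(\theta)-\g(\theta')|\le \om_2(d(\theta,\theta'))$, and let $\om_1$ be a modulus of continuity for $\g^{-1}$, so that $d(\g^{-1}(y),\g^{-1}(y'))\le \om_1(|y-y'|)$ for all $y,y'\in\pa\Om$. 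The second estimate is the crucial one: it says that Euclidean-close boundary points necessarily have close preimages on $\Sph^1$. I would stress that this is exactly where the simplicity of the curve (injectivity of $\g$) and the compactness of $\Sph^1$ enter, and it is what prevents distant strands of $\pa\Om$ from being confused near a self-approach.

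Next I would fix the threshold $r_0$ by requiring $\om_1(r_0)+2r_0<\pi$, and construct the local arc. Given $y_1,y_2\in\pa\Om$ with $0<|y_1-y_2|<r_0$, set $\theta_i=\g^{-1}(y_i)$; then $d(\theta_1,\theta_2)\le\om_1(|y_1-y_2|)<\pi$, so $\theta_1,\theta_2$ are joined by a unique shorter arc of $\Sph^1$. Enlarging this arc slightly on each side by $s:=|y_1-y_2|>0$ produces an open sub-arc $J\subset\Sph^1$ of length at most $\om_1(|y_1-y_2|)+2s<\pi$ whose interior contains $\theta_1,\theta_2$. Then $V:=\g(J)$ is a relatively open subset of $\pa\Om$ containing $y_1,y_2$, and since an open sub-arc of $\Sph^1$ of length less than $2\pi$ is homeomorphic to an open interval, so is $V$. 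Moreover any two points of $\overline J$ are at $d$-distance at most $\mathrm{length}(J)$, whence $\diam \g(\overline J)\le \om_2(\om_1(|y_1-y_2|)+2s)$.

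It remains to produce the ambient open set $U$ with the prescribed diameter bound. Since $V$ is relatively open in $\pa\Om$, by definition of the subspace topology there is an open set $W\subset\R^2$ with $V=W\cap\pa\Om$. I would then cap its diameter by intersecting with a thin tube around the compact arc $\overline V=\g(\overline J)$: for any $\eta>0$ put $U:=W\cap\{x\in\R^2:\dist(x,\overline V)<\eta\}$, which is open and contains $y_1,y_2\in V$. Because $V\subseteq\overline V$, the tube contains $V$, so $U\cap\pa\Om=V\cap\{x:\dist(x,\overline V)<\eta\}=V$ is homeomorphic to an open interval, as required; note this identity holds for every $\eta>0$, so no further smallness of $\eta$ is needed for correctness. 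Finally $\diam U\le \diam \overline V+2\eta\le \om_2\big(\om_1(|y_1-y_2|)+2|y_1-y_2|\big)+2\eta$, and choosing $\eta=|y_1-y_2|$ gives $\diam U\le \om_0(|y_1-y_2|)$ with $\om_0(t):=\om_2(\om_1(t)+2t)+2t$, which is a genuine modulus of continuity since $\om_1,\om_2$ are.

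I expect the only genuinely delicate point to be the second paragraph's use of uniform continuity of $\g^{-1}$: everything quantitative flows from the fact that nearby boundary points have nearby parameters, and this in turn is the precise manifestation of the curve being a simple closed curve on a compact parameter space. The construction of $U$ is then essentially a topological bookkeeping step, made clean by the observation that relative openness of $V$ already separates it from the remainder of $\pa\Om$, so that intersecting with a small tube around $\overline V$ only serves to control the diameter and cannot reintroduce far-away pieces of the boundary.
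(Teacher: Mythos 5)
Your proof is correct, but it takes a genuinely different route from the paper's. The paper invokes the Jordan--Schoenflies theorem to extend the boundary homeomorphism $\gamma:\pa B_1\to\pa\Om$ to a global homeomorphism $\Gamma:\R^2\to\R^2$; the ambient open set $U$ is then simply $\Gamma$ of a small ball centered at $\Gamma^{-1}(y_1)$, whose intersection with $\pa\Om$ is automatically an arc, with the diameter bound coming from uniform continuity of $\Gamma$ on $B_2$ and the localization from uniform continuity of $\Gamma^{-1}$ together with a finite covering argument. You avoid Schoenflies entirely: you work only with the circle parametrization (upgraded to a homeomorphism by the compact-to-Hausdorff fact), produce the relatively open arc $V=\g(J)$ directly on the parameter circle, and then manufacture the ambient open set by hand, intersecting a witness $W$ of relative openness with the tube $\{\dist(\cdot,\overline V)<\eta\}$. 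The crucial observation that makes this work --- and that replaces Schoenflies --- is that $W\cap\pa\Om=V$ already holds, so the tube serves only to cap the diameter and cannot reintroduce distant strands of the boundary; your identity $U\cap\pa\Om=V\cap\{\dist(\cdot,\overline V)<\eta\}=V$ is correct since $V\subseteq\overline V$. Both proofs share the same analytic skeleton (the composed modulus, roughly $\om_2\circ\om_1$, with localization furnished by uniform continuity of the inverse on a compact set), but yours is more elementary, dispensing with a deep theorem of plane topology, and it generalizes verbatim to Jordan curves in $\R^n$; the paper's version buys the full ambient chart structure ($\Gamma(B_1)=\Om$), which is stronger than what the statement needs but comes for free once Schoenflies is invoked. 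One small housekeeping point: you implicitly use monotonicity of $\om_1,\om_2$ (e.g.\ to deduce $\om_1(|y_1-y_2|)+2|y_1-y_2|<\om_1(r_0)+2r_0<\pi$ and that $\om_0$ is a modulus), which is harmless since moduli of continuity can always be taken nondecreasing, but it is worth stating.
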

\begin{proof}
By definition of Jordan curve, there exists a homeomorphism $\gamma : \pa B_{1} \to \pa \Omega$. By the Jordan-Schoenflies theorem, this can be extended to a homeomorphism $\Gamma : \R^2 \to \R^2$ such that $\Gamma (\pa B_{1}) = \pa \Omega$ and $\Gamma(B_1) = \Omega$. The restriction of $\Gamma$ to $B_2$ is uniformly continuous, hence there exists a modulus of continuity $\eta$ such that the diameter of $\Gamma(B_{t}(x))$ is bounded by $\eta(t/2)$, for all $B_{t}(x)\subset B_{2}$. One can then choose $t_{0}>0$ and a finite covering $\{B_{t_{0}}(x_{1}),\dots,B_{t_{0}}(x_{N})\}$ of $\pa B_{1}$, with $x_{j}\in \pa B_{1}$ for all $j$, such that $B_{t}(x)\cap \pa B_{1}$ is homeomorphic to an open interval for all $x\in \pa B_{1}$ and $0<t<2t_{0}$. 
%\replaceR{
%Then we set $r_{0}$ to be the supremum of $r>0$ such that whenever $y_{1},y_{2}\in \pa \Omega$ with $|y_{1}-y_{2}|<r$, then there exists $j\in \{1,\dots,N\}$ such that $y_{1},y_{2}\in \Gamma(B_{t_{0}}(x_{j}))$. Since $\Gamma^{-1}$ is uniformly continuous when restricted to $\Gamma(B_{2})$, one can deduce by the following argument that $r_{0}$ is well-defined (and positive). Indeed, let us assume by contradiction that, for all $r>0$, there exist $y_{1}, y_{2}\in \pa \Omega$ with $|y_{1}-y_{2}|<r$, such that $z_{1}=\Gamma^{-1}(y_{1})$ and $z_{2}=\Gamma^{-1}(y_{2})$ are not contained in the same ball $B_{t_{0}}(x_{j})$ for any $j=1,\dots,N$. Now, for $z\in \pa B_{1}$ we consider the function $\delta(z) = \min \{|z-x_{j}|:\ j=1,\dots,N\}$ and observe that $\delta$ is continuous and satisfies $\delta(z) < t_{0}$ for all $z\in \pa B_{1}$ (this last property follows from the fact that the family of balls $B_{t_{0}}(x_{j})$, $j=1,\dots,N$, is a covering of $\pa B_{1}$). By Weierstrass Theorem we conclude that the maximum of $\delta$ on $\pa B_{1}$ is smaller than $t_{0}-\e_{0}$, for some fixed $\e_{0}>0$. Consequently, one obtains $|z_{1}-z_{2}|\ge \e_{0}$, thus contradicting the uniform continuity of $\Gamma^{-1}$. This shows that $r_{0}$ is well-defined and strictly positive.}{
Note that the continuous function $\delta(z) = \min \{|z-x_{j}|:\ j=1,\dots,N\}$ has maximum value $t_0 -\e$ on $\pa B_1$ for some $\e>0$. So, if $|z_1 - z_2| <\e$, then $z_1, z_2 \in B_{t_0}(x_j)$ for some $j$. The uniform continuity of $\G^{-1}$ on $\G(B_2)$ then ensures that there exists $r>0$ such that if $|y_{1}-y_{2}|<r$ for $y_{1},y_{2}\in \pa \Omega$, then $y_{1},y_{2}\in \Gamma(B_{t_{0}}(x_{j}))$ for some $j\in \{1,\dots,N\}$. Let $r_0$ be the supremum of all such $r>0$.
%} 
Then, for $d< r_0$, setting
\begin{align*}
\omega(d) &= \sup \Big\{\eta(t):\ t = |\Gamma^{-1}(y_{1}) - \Gamma^{-1}(y_{2})|,\ y_{i}\in \pa \Omega,\ d=|y_{1}-y_{2}|\Big\}\,,\\
t_{1,2} &= |\Gamma^{-1}(y_{1}) - \Gamma^{-1}(y_{2})|,\qquad x_{1} = \Gamma^{-1}(y_{1}),\qquad U = \Gamma(B_{2t_{1,2}}(x_{1})), 
\end{align*}
one has that $y_{1},y_{2}\in U$, $\diam(U) \le \om(|y_{1}-y_{2}|)$, and that
$\pa \Omega \cap U = \Gamma(B_{2|y_{1}-y_{2}|}(y_{1}) \cap \pa B_{1})$ is homeomorphic to an open interval, as desired.
\end{proof}

\begin{remark}\rm
We observe that the converse of Proposition \ref{prop:Jordan} is also true, namely, that every bounded open set whose boundary is locally homeomorphic to an open interval is enclosed by a finite family of pairwise disjoint Jordan curves.
\end{remark}

Before going on we recall the definition of \textit{P-indecomposability} (see, for instance, \cite{ACMM}). We say that a set of finite perimeter $E$ is P-indecomposable if, whenever $F$ and $G$ are measurable sets such that $E = F\cup G$ and $P(E) = P(F)+P(G)$, then necessarily $|F|\cdot |G| =0$. It is possible to prove that every set of finite perimeter is a countable union of mutually disjoint, P-indecomposable sets. Note that a $P$-indecomposable component of a Cheeger set is itself a Cheeger set.

\begin{proposition} 
Suppose $\Om \subset \R^2$ is an open bounded set such that $\pa \Omega$ is a finite union of pairwise disjoint Jordan curves. Then, for any Cheeger set $E$ of $\Omega$, $\pa E$ is a finite union of pairwise disjoint Jordan curves.
\end{proposition}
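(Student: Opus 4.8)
The plan is to prove that $\partial E$ is a compact topological $1$-manifold without boundary and then invoke the classification of compact $1$-manifolds, which forces $\partial E$ to be a finite disjoint union of circles, that is, of pairwise disjoint Jordan curves. Since $E\subseteq\Om$ is bounded, $\partial E$ is automatically compact, so that the \emph{finiteness} of the family will come for free and it only remains to check that $\partial E$ is locally homeomorphic to an open interval at each of its points. I would treat separately the \emph{free-boundary} points $x\in\partial E\cap\Om$ and the \emph{contact} points $x\in\partial E\cap\partial\Om$; note that, since $E\subseteq\Om$, the latter set coincides with $\overline E\cap\partial\Om$.

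At a free-boundary point the local structure is immediate from regularity: by Proposition~\ref{prop:proprietaCS}(i), near such $x$ the set $\partial E\cap\Om$ is an embedded analytic arc of a circle of radius $r=1/h(\Om)$, hence locally homeomorphic to an interval. Decomposing the free boundary into maximal arcs, each is either a full circle $\partial B_r$ --- a connected component of $\partial E$ that is already a Jordan curve and may be set aside --- or an arc whose closure is compact in $\overline\Om$ and whose two endpoints therefore lie on $\partial\Om$.

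At a contact point I would use Proposition~\ref{prop:Jordan} (together with the remark that its conclusion persists for a finite union of disjoint Jordan curves): there is a small neighborhood $U$ of $x$ in which $\partial\Om\cap U$ is a simple arc $\beta$ and, by Jordan-Schoenflies, $\Om\cap U$ lies on one side of $\beta$. Assuming the local finiteness discussed below, $\partial E\cap U$ is the union of the part of $\beta$ lying in $\overline E$ with finitely many free-boundary arcs attaching to $\beta$. On the relative interior of the contact set $\partial E$ agrees with $\beta$, hence is a simple arc; at a transition point, where the boundary leaves $\partial\Om$ along an attaching arc $\gamma$, the local picture consists of a sub-arc of $\beta$ and a sub-arc of $\gamma$ meeting at a single common endpoint. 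This meeting is tangential at regular points of $\partial\Om$ by Proposition~\ref{prop:proprietaCS}(iii), but in any case --- even at a corner of $\partial\Om$ --- the union of two arcs joined at one endpoint is homeomorphic to an interval. Thus the $1$-manifold property can fail only if three or more branches of $\partial E$ meet at $x$, or if free-boundary arcs accumulate at $x$.

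The hard part will therefore be to rule out the accumulation of free-boundary arcs, i.e.\ to establish their \emph{local finiteness}. I would exploit the fact that each arc $\gamma_i\subset\partial B_r(c_i)$ has $E$ lying on its concave side, so that $E$ contains a half-disk of $B_r(c_i)$. If infinitely many $\gamma_i$ accumulated at some $p\in\partial\Om$, the centers $c_i$ would be bounded and, along a subsequence, converge to $c_*$ with $|c_*-p|=r$; a limiting argument would give $B_r(c_*)\subseteq\mathrm{int}(E)$, a ball tangent to $\partial\Om$ from inside at $p$. Since $\mathrm{int}(E)$ is disjoint from $\partial E$, every $\gamma_i$ would have to sit in the thin crescent between $\partial B_r(c_*)$ and $\partial\Om$, which is pinched at $p$; but the orientation constraint, with $E$ always on the center side of circles of the common radius $r$, is incompatible with two disjoint such arcs being nested there and, by a short geometric argument, with their accumulation at $p$. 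This contradiction would yield local finiteness, and hence the $1$-manifold property, completing the proof. As an independent check on the global count, every P-indecomposable component of $E$ is again a Cheeger set and so has area at least $\pi(2/h(\Om))^2$ by Proposition~\ref{prop:proprietaCS}(iv); as these components are disjoint and $|E|\le|\Om|<\infty$, there can be only finitely many of them.
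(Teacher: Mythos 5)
Your overall frame (show that $\partial E$ is locally an arc at every point, then use compactness to conclude that it is a finite disjoint union of Jordan curves) is the same as the paper's, and your treatment of free-boundary points and of points where $\partial E$ locally agrees with $\partial\Omega$ is fine. The genuine gap is the step you yourself flag as the hard part: \emph{local finiteness of the free-boundary arcs at a contact point is false in general}, so no argument can establish it. Take a disk with a sequence of tiny concave indentations placed along its boundary and accumulating at a point $p$: each indentation forces a separate bridging arc of radius $r=1/h(\Omega)$ in $\partial E\cap\Omega$, these arcs accumulate at $p$, and $p$ is then a contact point lying in $\overline{\partial E\cap\Omega}$. The Koch snowflake of Section \ref{sec:Koch} --- precisely the kind of domain this proposition is needed for --- behaves the same way: by self-similarity, every neighborhood of a contact point of its Cheeger set contains reflex corners of the boundary at all scales, hence infinitely many components of the free boundary. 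Your sketched contradiction also fails on its own terms: from the local fact that $E$ lies on the center side of $\gamma_i\subset\partial B_r(c_i)$ you cannot deduce that $E$ contains a half-disk of $B_r(c_i)$, nor in the limit that $B_r(c_*)\subset\mathrm{int}(E)$; containment of a full ball of radius $r$ is a global statement (essentially Theorem \ref{theorem:Pestov and Ionin}, proved via the cut locus), not a local consequence of the curvature bound. And even granting $B_r(c_*)\subset\mathrm{int}(E)$, there is no incompatibility to exploit: the accumulating arcs sit \emph{side by side} in the crescent, one over each indentation, not nested, each with $E$ on its own center side.

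Because accumulation cannot be excluded, a correct proof must handle it, and this is what the paper does. Near a contact point $x$ it proves (Step one) that, up to finitely many exceptions, every arc has both endpoints on the local parametrization $\sigma$ of $\partial\Omega$, that the two endpoints of each arc lie on the same side of $x$, and that the parameter intervals of distinct arcs are pairwise disjoint; the latter two facts follow from P-indecomposability combined with the area lower bound of Proposition \ref{prop:proprietaCS}(iv), since an arc whose endpoints straddle $x$, or interleave with those of another arc, would cut off a Cheeger set of arbitrarily small area. It then (Step two) rules out $m\ge 3$ arcs meeting at $x$ by explicit variational competitors (cutting the angle between half-tangents, shortcutting two tangent arcs), and finally concatenates the countably many arcs with the complementary pieces of $\sigma$ into a single continuous injective parametrization of $\partial E$ near $x$. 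Note that you name the triple-junction failure mode but never exclude it; that exclusion is variational, not topological, so it would remain a gap in your argument even if local finiteness were true. The one part of your plan that does match the paper is the final count: the area lower bound gives finitely many P-indecomposable components, hence finitely many curves.
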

\begin{proof}
	Let $E$ be a Cheeger set in $\Omega$ and let $x \in \pa E$. It suffices to find an open neighborhood $U$ of $x$ and a continuous injective curve $\g$ such that $\pa E\cap U = \text{Im}(\g)$. The free boundary $\pa E \cap \Omega$ is analytic by Proposition~\ref{prop:proprietaCS}(i). Now, let $x\in \pa E \cap \pa \Omega$. By Proposition \ref{prop:Jordan}, there are an open neighborhood $U$ of $x$ with arbitrarily small diameter and a simple parametric curve $\sigma :I \to \R^2$, with $I = (-1,1)$, such that $\sigma(I) = \pa \Om \cap U$ and $\sigma(0) = x$. So, if $x \in \pa E \cap \pa \Omega \setminus \overline{\pa E\cap \Om}$, then up to a reparametrization, we can take $\g(t) = \sigma(t)$ for $t$ close to $0$. This proves the claim in a possibly smaller neighborhood $\widetilde U\subset U$ of $x$. We now consider the case $x \in \pa E \cap \pa \Omega \cap (\overline{\pa E\cap \Om})$, which we split in two steps. 
	
	\emph{Step one.} Up to choosing a smaller neighborhood $U$ of $x$, the connected components of $\pa E \cap \Om\cap U$ are circular arcs with radius $r = 1/h(\Om)$ that satisfy the following properties:
	\begin{itemize}
		\item[(i)] if $\alpha$ is a connected component of $\pa E \cap \Om\cap U$ such that $\dd(x,\alpha)\le  \dd(x,U^{c})/2$, then both endpoints of $\alpha$ belong to $\sigma(I)$, up to a possible exception of a finite number of arcs with no more than one endpoint on $\sigma(I)$;
		\item[(ii)] if $\alpha\subset \pa E \cap \Om\cap U$ has both endpoints on $\sigma(I)$, and if $a_{i}= \sigma(t_{i})$ for $i=1,2$ denote the endpoints of $\alpha$, then either $t_{1},t_{2}\le 0$ or $t_{1},t_{2}\ge 0$ (in other words, the two endpoints lie on the same ``side'' of $\sigma$ with respect to $\sigma(0)=x$);
		\item[(iii)] let $\alpha,\beta\subset \pa E \cap \Om\cap U$ denote two distinct arcs with all endpoints on $\sigma(I)$ and let $a_{i}= \sigma(t_{i})$ and $b_{i}= \sigma(s_{i})$, $i=1,2$, respectively denote the endpoints of $\alpha$ and $\beta$. Then, assuming without loss of generality that $t_{1}<t_{2}$ and $s_{1}<s_{2}$, we either have $t_{2}\le s_{1}$ or $s_{2}\le t_{1}$.
	\end{itemize}
	Indeed, if an endpoint of an arc $\alpha \subset \pa E \cap \Om\cap U$ does not lie in $\sigma(I)$, then it must lie in $U^c$. Hence, if $\alpha$ has at least an endpoint lying in $U^c$ and $B_r(x)\cap \alpha$ is nonempty, where $r = \dd(x, U^c)/2$, then the length of $\alpha$ is at least $r$. As $E$ has finite perimeter, there can only be finitely many such arcs; up to decreasing the diameter of $U$, this proves (i). 
	To prove (ii), assume that there exists $\alpha \subset \pa E\cap U\cap \Om$ with endpoints $a_{1} = \sigma(t_{1})$ and $a_{2}=\sigma(t_{2})$ such that $t_{1}<0<t_{2}$. Concatenating the curves $\alpha$ and $\sigma([t_{1},t_{2}])$, we obtain a Jordan curve $\gamma$ that encloses a domain $D$. Since $x$ belongs to the closure of $\pa E\cap \Om$ but does not belong to $\alpha$, it follows that $D\cap E$ has positive measure. On the other hand, $D\cap E$ is also a P-indecomposable component of $E$, thus it is a Cheeger set of $\Om$. Since the Jordan curve $\gamma$ is contained in $U$ and $U$ has an arbitrarily small diameter, we reach a contradiction with the volume lower bound \eqref{eq: bound on volume}. Finally, (iii) is shown by arguing as in the proof of (ii).
	
	\emph{Step two.}  We first show that $x$ cannot be the endpoint of $m\ge 3$ connected components $\alpha_{1},\dots,\alpha_{m}$ of $\pa E\cap \Om\cap U$. Assume for contradiction that $m\ge 3$ such arcs meet at $x$, choose $t>0$ such that $B_{t}(x)\subset U$, and denote by $q_{j}$ the intersection of $\alpha_{j}$ with $\pa B_{t}(x)$, for $j=1,\dots, m$. Assume without loss of generality that the $q_{j}$s are ordered with respect to the standard, positive orientation of $\pa B_{t}(x)$, and that the arc of $\pa B_{t}(x)$ between $q_{1}$ and $q_{2}$ is contained in $E$; this can be always guaranteed up to reversing the orientation and consistently relabeling the points. Let $\theta_{12}$ be the angle of the sector containing $\alpha_1,\alpha_{2}$ spanned by the half-tangents to $\alpha_{1}$ and $\alpha_{2}$ at $x$. Observe that $\theta_{12}\ge \pi$, otherwise we could ``cut the angle'' to produce an admissible variation of $E$ that would decrease the quotient $P(E)/|E|$. Now, the arc $\alpha_{3}$ must be tangent to $\alpha_{2}$ at $x$, otherwise we could again produce a better competitor by perturbing the Cheeger set by cutting the angle $\theta_{31}$ formed by the half-tangents to $\alpha_{3}$ and $\alpha_{1}$. However, we can now shortcut $\alpha_{2}\cup \alpha_{3}$ near $x$ as depicted in Figure~\ref{fig:shortcut}, producing a competitor with reduced perimeter and increased area.		
		
		Thus, either $x$ is the endpoint of exactly two arcs or it is the endpoint of at most one arc. We use the properties proved in Step one to provide a suitable local parametrization of the boundary.
	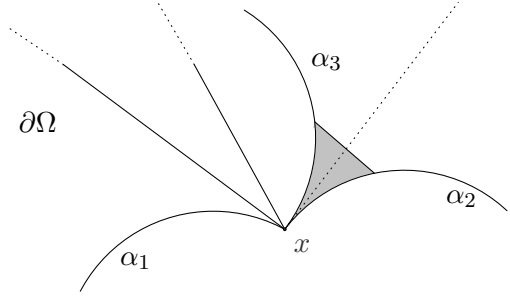
\begin{figure}[t]
		\centering
		\definecolor{ffffff}{rgb}{1.,1.,1.}
%		\definecolor{bfffqq}{rgb}{0.7490196078431373,1.,0.}
%		\definecolor{qqqqff}{rgb}{0.,0.,1.}
		\definecolor{uuuuuu}{rgb}{0.2,0.2,0.2}
		\begin{tikzpicture}[line cap=round,line join=round,>=triangle 45,x=2.0cm,y=2.0cm]
		\clip(-4.1,3.) rectangle (-0.6,5.3);
		\fill[line width=0.pt,color=uuuuuu,fill=uuuuuu,fill opacity=0.3] (-1.96500230400484,4.3743557762752605) -- (-2.1645938890165213,3.6597664631250506) -- (-1.5686820167356883,4.032713621858969) -- cycle;
		\draw [shift={(-1.3699778971494405,3.0526540966976374)},line width=0.4pt,color=ffffff,fill=ffffff,fill opacity=1.0]  (0,0) --  plot[domain=1.7708318232776277:2.4891711284700193,variable=\t]({1.*1.*cos(\t r)+0.*1.*sin(\t r)},{0.*1.*cos(\t r)+1.*1.*sin(\t r)}) -- cycle ;
		\draw [shift={(-2.959209880883603,4.266878829552464)},line width=0.4pt,color=ffffff,fill=ffffff,fill opacity=1.0]  (0,0) --  plot[domain=1.7708318232776277:2.4891711284700193,variable=\t]({-0.26282914906181154*1.*cos(\t r)+0.9648423904469807*1.*sin(\t r)},{0.9648423904469807*1.*cos(\t r)+0.26282914906181154*1.*sin(\t r)}) -- cycle ;
		\draw [shift={(-1.3699778971494405,3.0526540966976374)}] plot[domain=0.81939866086987:2.4891711284700193,variable=\t]({1.*1.*cos(\t r)+0.*1.*sin(\t r)},{0.*1.*cos(\t r)+1.*1.*sin(\t r)});
		\draw [shift={(-2.64,2.78)}] plot[domain=1.075370740362767:2.655388642900473,variable=\t]({1.*1.*cos(\t r)+0.*1.*sin(\t r)},{0.*1.*cos(\t r)+1.*1.*sin(\t r)});
		\draw [shift={(-2.959209880883605,4.266878829552466)}] plot[domain=0.81939866086987:2.4891711284700193,variable=\t]({-0.2628291490618113*1.*cos(\t r)+0.9648423904469808*1.*sin(\t r)},{0.9648423904469808*1.*cos(\t r)+0.2628291490618113*1.*sin(\t r)});
		\draw [dotted] (-2.1645938890165213,3.6597664631250506)-- (-1.0115792490379993,5.168883922180038);
		\draw (-1.96500230400484,4.3743557762752605)-- (-1.5686820167356883,4.032713621858969);
		\draw [color=black] (-2.1645938890165213,3.6597664631250506)-- (-3.634532008815683,4.753955761237582);
		\draw [color=black] (-2.1645938890165213,3.6597664631250506)-- (-2.7657865819204965,4.753955761237582);
		\draw [color=black](-4,4.507029667664105) node[anchor=north west] {$\partial \Omega$};
		\draw [dotted,color=black] (-2.7657865819204965,4.753955761237582)-- (-3.041944068615655,5.225724801008479);
		\draw [dotted,color=black] (-3.6345320088156825,4.753955761237581)-- (-4.,5.);
		\draw [fill=uuuuuu] (-2.1645938890165213,3.6597664631250506) circle (0.5pt);
		\draw[color=uuuuuu] (-2.05,3.55) node {$x$};
		\draw[color=black] (-0.9893884815480305,3.8620083763870525) node {$\alpha_2$};
		\draw[color=black] (-3.156576014092701,3.451522258043791) node {$\alpha_1$};
		\draw[color=black] (-1.8861794884196807,4.772626669954656) node {$\alpha_3$};
		\end{tikzpicture}
		\caption{The cut from $\alpha_2$ to $\alpha_3$ adds the gray area to the competitor producing a better Cheeger ratio.}\label{fig:shortcut}
	\end{figure}	
By Step one, the set $\pa E\cap \Om\cap U$ is given by the union of at most two families $\{\alpha_{j}\}_{j}$ and $\{\beta_{j}\}_{j}$ of arcs, with endpoints equal to, respectively, $a_{j,1}=\sigma(t_{j,1})$, $a_{j,2}=\sigma(t_{j,2})$, $b_{j,1}=\sigma(s_{j,1})$, $b_{j,2}=\sigma(s_{j,2})$. Of course, when $x$ is the endpoint of exactly one arc (respectively, two arcs) then at least one of these two (respectively, both) families have a single element, but the subsequent argument is not affected by this possibility. We thus assume without loss of generality that both families contain countably many elements. By properties (ii) and (iii) proved in Step one, and setting $A_{j} = (t_{j,1},t_{j,2})$ and $B_{j} = (s_{j,1},s_{j,2})$, we deduce that 
	\[
	t_{j,i}\le 0\le s_{j,i}\qquad\text{and}\qquad A_{j}\cap A_{k} = B_{j}\cap B_{k} = \emptyset\,,
	\]
	for all $j\neq k\in \N$ and $i=1,2$. It is then clear how to define a parametrization of $\pa E\cap U$ by concatenating circular arcs with pieces of $\sigma$. To do this, we may assume that every arc $\alpha_{j}$ (respectively, $\beta_{j}$) is parametrized over the interval $A_{j}$ (respectively, $B_{j}$), so that we can define for $t\in (-1,1)$
	\[
	\g(t) = \begin{cases}
	\alpha_{j}(t) & \text{if }t\in A_{j},\\
	\beta_{j}(t) & \text{if }t\in B_{j},\\
	\sigma(t) & \text{if }t\notin \bigcup_{j\in \N} (A_{j}\cup B_{j}).
	\end{cases}
	\]
	It is easy to check that $\g$ is well-defined in $(-1,1)$ and provides a continuous, injective parametrization of $\pa E\cap U$. This completes the proof of the proposition.
\end{proof}

\begin{proposition} \label{prop:sconnectednessE}
Suppose $\Om \subset \R^2$ is open, bounded, and simply connected with $|\pa \Om| = 0$. Then any Cheeger set $E$ of $\Om$ is Lebesgue-equivalent to a finite union of simply connected open sets.
\end{proposition}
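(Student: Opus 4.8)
My plan is to combine the measure lower bound for planar Cheeger sets with the structure theory for indecomposable sets of finite perimeter, using the two hypotheses on $\Om$ only to rule out holes. First I would decompose $E$ into its $P$-indecomposable components. Since a $P$-indecomposable component of a Cheeger set is again a Cheeger set, Proposition~\ref{prop:proprietaCS}(iv) bounds the measure of each component below by $\pi(2/h(\Om))^{2}$; as the components are mutually disjoint and contained in $\Om$, finiteness of $|\Om|$ forces there to be only finitely many, say $E = E_{1}\cup\dots\cup E_{N}$ up to a Lebesgue-null set. It then suffices to prove that each indecomposable Cheeger set $E_{i}$ is Lebesgue-equivalent to a simply connected open set.

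The heart of the argument is to show that $E_{i}$ has no holes, i.e.\ that every indecomposable component of finite measure of $\R^{2}\setminus E_{i}$ is Lebesgue-null. Here I would exploit the topology of $\Om$: since $E_{i}\subseteq\Om$ up to a null set and $\Om$ is bounded and simply connected, $\R^{2}\setminus\Om$ is connected and unbounded, hence contained (up to a null set) in the unique unbounded component of $\R^{2}\setminus E_{i}$. Thus any hole $H$ is essentially disjoint from $\R^{2}\setminus\Om$, so that $H\subseteq\overline\Om$ up to a null set, and at this point the assumption $|\pa\Om|=0$ enters: it yields $|H\setminus\Om|\le|\pa\Om|=0$, whence $H\subseteq\Om$ up to a null set. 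Consequently $E_{i}\cup H$ is admissible for the Cheeger problem, and since filling a hole does not increase perimeter while it strictly increases measure when $|H|>0$, the quotient $P(E_{i}\cup H)/|E_{i}\cup H|$ would drop strictly below $h(\Om)$---a contradiction. Hence $E_{i}$ coincides, up to a null set, with its saturation.

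To finish I would invoke the decomposition of the reduced boundary into rectifiable Jordan curves from \cite{ACMM}: a saturated, indecomposable planar set of finite measure is bounded by a single rectifiable Jordan curve, hence is Lebesgue-equivalent to the bounded, open, simply connected region it encloses. Taking the essentially disjoint union of these regions over $i=1,\dots,N$ then produces the desired representation of $E$ as a finite union of simply connected open sets.

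I expect the main obstacle to be the second step, namely translating the purely topological connectedness of $\R^{2}\setminus\Om$ into a statement about the measure-theoretic components of $\R^{2}\setminus E_{i}$, and carefully tracking the role of $|\pa\Om|=0$ that keeps the saturated competitor inside $\Om$. This is precisely the place where the hypothesis cannot be dropped: an Osgood-type Jordan curve of positive measure would let a hole conceal a positive-measure piece of $\pa\Om$, so that $E_{i}\cup H$ need no longer be admissible and the competitor construction would break down. The identification of a saturated indecomposable set with a Jordan interior is likewise a nontrivial appeal to the structure theory in \cite{ACMM}.
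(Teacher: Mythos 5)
Your Step 1 (finitely many $P$-indecomposable components via Proposition~\ref{prop:proprietaCS}(iv)) and Step 3 (the characterization of simple sets in \cite{ACMM}) are fine, but Step 2, which you rightly call the heart of the argument, contains a genuine error: the inference ``$\R^{2}\setminus\Om$ is connected and unbounded, \emph{hence} contained up to a null set in the unique unbounded indecomposable component of $\R^{2}\setminus E_{i}$'' is invalid. Indecomposable components are measure-theoretic objects, and a topologically connected set can be distributed over several of them whenever its connectedness is realized through a Lebesgue-null bridge. The hypotheses of the proposition permit exactly this. Concretely, let $\Om_{0}=D\setminus\overline{H}$ be a disk minus a small closed disk near $\pa D$; as recalled in the introduction of the paper, for $H$ small enough the unique Cheeger set of $\Om_{0}$ is $\Om_{0}$ itself. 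Now set $\Om=\Om_{0}\setminus S$, where $S$ is a segment joining $\pa H$ to $\pa D$. Then $\Om$ is open, bounded, simply connected, and $|\pa\Om|=0$, so it satisfies every hypothesis of the proposition. Since perimeter and Lebesgue measure do not see the null set $S$, the Cheeger problem in $\Om$ is the same as in $\Om_{0}$, and $E=\Om$ is a Cheeger set of $\Om$ which is Lebesgue-equivalent to $\Om_{0}$. This $E$ is indecomposable, and $\R^{2}\setminus E$ has the bounded indecomposable component $\overline{H}$, of positive measure: a hole. So your intermediate claim ``$E_{i}$ has no holes'' is not merely unproven, it is false under the stated hypotheses; the failure occurs precisely because $\R^{2}\setminus\Om=(\R^{2}\setminus D)\cup S\cup\overline{H}$, though topologically connected, has the positive-measure piece $\overline{H}$ inside a \emph{bounded} component of $\R^{2}\setminus E$. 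Accordingly the saturation $E\cup\overline{H}=D$ is not contained in $\Om$ modulo null sets, and your competitor argument cannot be run.

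Note that the proposition is nevertheless true for this $E$: it is Lebesgue-equivalent to $\Om$ itself, which is open and simply connected. This shows that ``Lebesgue-equivalent to a finite union of simply connected open sets'' is strictly weaker than ``saturated'' (an annular region is equivalent to a slit annular region), and a correct proof must exploit exactly this slack --- which is what the paper's proof does. The paper never attempts to fill holes of $E$: it replaces $E$ by the set $E^{\circ}$ of its interior points (using $|\pa\Om|=0$ and the local structure of $\pa E\cap\Om$ to see this is a Lebesgue-equivalent open set), proves that no $P$-indecomposable component $G'$ of $G=\Om\setminus E$ can satisfy $P(G';\Om)=P(G')$ (such a $G'$ could be absorbed into $E$, strictly improving the Cheeger quotient), and then shows that the interior $F$ of any Jordan curve in $E^{\circ}$ meets $G$ in a null set, the simple connectedness of $\Om$ serving only to keep $F$ inside $\Om$ so that a $P$-indecomposable component of $G$ trapped in $F$ would violate the previous step. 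In other words, the paper controls only the part of a putative hole lying inside $\Om$, which is the only part the variational argument can reach; your approach tries to control the whole hole, including the part hidden in $\R^{2}\setminus\Om$, and that is impossible.
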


\begin{proof} 

\textit{Step one.} We show that $E$ is Lebesgue-equivalent to the set $E^{\circ}$ of its interior points. It is enough to check that, given a sequence $\{\Om_{j}\}_{j\in \N}$ of open subsets that are relatively compact in $\Om$ with $\Om = \bigcup_{j\in \N}\Om_{j}$, one has 
\begin{equation}\label{eq:Einterno}
|(E\setminus E^{\circ})\cap \Om_{j}| = 0\qquad \text{for all }j\,.
\end{equation}
Indeed, if we combine \eqref{eq:Einterno} with the assumption $|\pa \Om|=0$ we get
\[
|E\setminus E^{\circ}|  \le |\pa E\cap \pa \Om| + \sum_{j\in \N} |(E\setminus E^{\circ})\cap \Om_{j}| = 0\,.
\]
On the other hand, \eqref{eq:Einterno} holds since $\pa E\cap \Om_{j}$ coincides with the intersection of a finite union of circular arcs with $\Om_{j}$, thus in particular $|\pa E \cap \Om_{j}|=0$ for all $j$.

\textit{Step two.} Let $G= \Om \setminus E$. We show the following fact: if $G'$ is a P-indecomposable component of $G$ then 
\[
P(G';\Om) < P(G')\,.
\] 
Otherwise, if $P(G';\Om) = P(G')$, we would have 
\begin{align*}
P(E\cup G') &= P(E\cup G';\pa \Om) + P(E\cup G';\Om)\\
&\le P(E;\pa \Om) + P(E;\Om) + P(G';\Om) - 2\Hau^{1}(\pa^{*}E\cap \pa^{*}G'\cap \Om) \\
&= P(E) - \Hau^{1}(\pa^{*}E\cap \pa^{*}G'\cap \Om)\\ 
& \le P(E)\,,
\end{align*}
which would in turn contradict the fact that $E$ is a Cheeger set, as $|E\cup G'|>|E|$. 

\textit{Step three.} Assuming $E$ open according to Step one, we let $\g$ be a nontrivial simple closed curve contained in $E$ and let $F$ be the bounded subset of $\R^2$ with $\pa F = \g$. Note that $F$ is compactly contained in $\Om$ since its closure is a compact set with boundary $\pa F = \g$ at a positive distance from $\pa E$. We want to prove that $F$ is compactly contained in $E$ as well, which amounts to showing that $|F\cap G| = 0$. Argue by contradiction and suppose $|F\cap G|>0$. Since $\gamma$ is at a positive distance from $\pa E$, $F\cap G \supseteq G'$ for some P-indecomposable component $G'$ of $G$. Then, as $G' \subset F \subset\subset \Om$, a contradiction is reached by Step two.

Hence, by Steps one and three and up to a set of measure zero, $E$ is the countable union of open and simply connected sets. Finally, each connected component of $E$ is itself a Cheeger set, thus has volume bounded from below. Pairing this with the boundedness of $\Om$, we find that $E$ has finitely many connected components, completing the proof.
\end{proof}

\subsection{Sets with positive reach and Steiner's formulas}

Given $A\subset \R^n$, the parallel set of $A$ at distance $r$ is defined by
\[
A_r  = \{ x \in \R^n : \dd(x, A ) \leq r\}.
\]  
For a convex set $A\subset \R^n$, the Hausdorff $d$-dimensional measure of $A_r$ can be expressed as a degree-$d$ polynomial in $r$ with coefficients depending on $A$ for any $r>0$. This was originally shown in \cite{Steiner1840},  cf. \cite{Schneider14}. For $n=d=2$, this polynomial takes the form
\begin{equation}\label{eqn: Steiner}
|A_r | = |A| + rP(A) + \pi r^2.
\end{equation}
Polynomial expansions of the same type were shown for $C^2$ sets in \cite{Weyl39} for $r>0$ sufficiently small. If $A\subset \R^2$ is simply connected and of class $C^2$ (actually, $C^{1,1}$ will suffice), then the expansion holds in the same form \eqref{eqn: Steiner}.
In \cite{Federer59}, Federer gave a unified treatment of this theory  with the introduction of sets of positive reach. He defined the reach of a set $A$ to be
\[
\text{reach}(A) = \sup \{ r : \text{ if } x \in A_r, \text{ then } x \text{ has a unique projection onto } A\}
\]
and showed a polynomial expansion for $|A_r|$ for $0< r <\text{Reach}(A)$. 

If $A\subset \R^2$ is simply connected with positive reach, the proof of this polynomial expansion is fairly simple.
For any $0<t<\text{reach}(A)$, $A_t$ is a simply connected set of class $C^{1,1}$. Hence, \eqref{eqn: Steiner} holds for $A_t$, that is, for $t<r$,
\begin{equation}\label{eqn: 1}
|A_r|-|A_t| = (r-t) P(A_t) + \pi (r-t)^2.
\end{equation}
Since $\lim_{t\to0^+}|A_t|= |A|$, it follows that  $ \lim_{t\to 0^+}P(A_t)=c_A$ exists. Hence, taking $t \to 0^+$ in \eqref{eqn: 1}, we see that \eqref{eqn: Steiner} holds for $A$ with $c_A$ replacing $P(A)$.
And actually, dividing \eqref{eqn: 1} by $r-t$, then letting $t\to 0^+$ and $r\to 0^+$ respectively, it follows that  $ c_A= \M_o(A)$, where the outer Minkowski content\footnote{Under certain regularity assumptions on $A$, the outer Minkowski content of $A$ is equal to the perimeter of $A$; see \cite{AmbColVil} for a treatment of the subject. In certain cases arising in our setting, these two quantities fail to coincide, but this is of no importance in our application.} $\M_o(A)$ of $A$ is defined by
\[
\M_o(A) =  \lim_{r\to 0^+} \frac{|A_r| - |A|}{r}.
\]
So, if $A\subset \R^2$ is a simply connected set with positive reach, then
\begin{equation}\label{eqn: Steiner2}
|A_r| = |A|+  \M_o(A)r  + \pi r^2 \qquad  0<r <\text{reach}(A).
\end{equation}
Differentiating this identity, we also find that
\begin{equation}\label{eqn: Steiner3}
P(A_r) = \M_o(A) + 2 \pi r \qquad 0<r <\text{reach}(A).
\end{equation}

\section{Proof of Theorem~\ref{theorem:Pestov and Ionin}} 
\label{section:3}
The goal of this section is to prove Theorem \ref{theorem:Pestov and Ionin}. Throughout the section, we will make the following assumption:
\begin{equation}\label{eqn:JV}
\begin{split}
&\text{\it $E\subset \R^2$ is a Jordan domain with curvature bounded}\\
&\text{\it from above by $1$ in the viscosity sense.}
\end{split}
\end{equation}
By Proposition \ref{prop:Jordan}, we know that $\pa E$ is locally homeomorphic to an interval. We thus set $r_{0}$ as in Definition \ref{def:lochomeo} and, without loss of generality, we assume $\omega_{0}(r_{0})<1$ and $r_{0}<1/4$. 
In order to prove Theorem~\ref{theorem:Pestov and Ionin}, we must show that $E$ contains at least one point $x$ such that $\dd(x,\pa E)\geq 1$.
For any $x \in E$, we define the projection set
\begin{equation}\label{eq:set of proj}
P_x = \{ y \in \pa E : \dd (x, \pa E) = |x-y|\}\,.
\end{equation}

The following lemma will play a key role in what follows.
\begin{lemma}\label{lem: dist 1}Suppose $E \subset \R^2$ satisfies \eqref{eqn:JV}. Suppose there exist $x\in E$ and $y_1, y_2 \in P_x$ such that $0<|y_{1}-y_{2}|<r_{0}$. Then $\dd(x, \pa E) \geq 1$.
\end{lemma}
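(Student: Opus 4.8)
The plan is to argue by contradiction: assume $d := \dd(x,\pa E) < 1$ and manufacture a circular arc of radius $<1$ that touches $E$ from outside, contradicting the viscosity curvature bound.

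First I would record the local geometry near the two feet. Since $0 < |y_1 - y_2| < r_0$, Proposition~\ref{prop:Jordan} (with $r_0,\omega_0$ as fixed above) gives an open set $U \ni y_1,y_2$ with $\diam(U) \le \omega_0(|y_1-y_2|) \le \omega_0(r_0) < 1$ such that $\pa E \cap U$ is a single simple arc; let $\Gamma$ be its sub-arc joining $y_1$ to $y_2$. Because $d$ is the \emph{minimal} distance, $\Gamma \subset \R^2 \setminus B_d(x)$ with $y_1,y_2 \in \pa B_d(x)$, and $E$ lies on the $x$-side of $\Gamma$. I would first check that $\Gamma$ meets $\pa B_d(x)$ \emph{tangentially} at each foot. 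A foot is a local minimum of the distance to $x$, so both one-sided tangents of $\pa E$ there point into the closed exterior half-plane of $\pa B_d(x)$; if this occurred at a genuine corner it would be a corner with $E$-interior angle $<\pi$, and then a disk of radius $<1$ centred just inside $E$ would contain $E$ locally while passing through the corner, so that it touches $\pa E$ from outside — contradicting Definition~\ref{def:BCVS}. Hence $\Gamma$ is tangent to $\pa B_d(x)$ at both $y_1$ and $y_2$.

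Next I would show that $\Gamma$ must reach distance at least $1$ from $x$. Let $g(z)=|z-x|$ on $\Gamma$. At an interior point $q$ where $g$ has a local maximum $c:=g(q)$, the arc lies locally inside $\overline{B_c(x)}$, and since $E$ sits on the $x$-side one has $E\cap B_\e(q) \subset \overline{B_c(x)}$ with $q$ in the relative interior of an arc of $\pa B_c(x)$; the viscosity bound then forces $c\ge 1$. Since $g\equiv d$ at the two endpoints and $g\ge d$ throughout, $g$ attains an interior maximum (the alternative $\Gamma\subset \pa B_d(x)$ is itself ruled out by the viscosity bound, as $d<1$). Therefore $\max_\Gamma g \ge 1$, and I fix $p\in\Gamma$ with $|p-x|\ge 1$.

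The final, and hardest, step is to contradict the coexistence of: (a) $\Gamma$ leaves $y_1$ tangentially (zero radial speed) yet must raise its distance to $x$ from $d$ up to $\ge 1$ at $p$; (b) $\Gamma$ has curvature $\le 1$, so (by Gauss--Bonnet on the region bounded by $[x,y_1]$, $\Gamma$, $[y_2,x]$) its total turning between the two tangential feet is only of order $\angle y_1 x y_2$, which is small because $|y_1-y_2|$ is small; and (c) $\Gamma$ is confined to $U$, of diameter $<1$. Heuristically, the radial speed $g'=\cos\angle(z-x,\dot z)$ stays near $0$, because the tangent can deviate from the (slowly rotating) tangential direction only by the small total turning allowed, so $g$ cannot climb all the way from $d$ to $1$ when $d<1$. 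Making this rigorous is precisely where the \emph{viscosity} (as opposed to $C^2$) nature of the curvature bound bites: the comparison of $\Gamma$ with the unit circles osculating at $y_1,y_2$ and at $p$ must be carried out through supporting/touching circles rather than pointwise second derivatives, and I expect this comparison — in effect, that a curvature-$\le 1$ arc tangent to $\pa B_d(x)$ at two nearby points cannot bulge out to distance $1$ and return when $d<1$ — to be the main obstacle. This is the same regularity difficulty that forces the careful treatment of cut and focal points developed in the rest of the section.
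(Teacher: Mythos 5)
Your proposal is not a complete proof: the entire weight of the argument rests on your final step, and you explicitly leave it as a heuristic. Granting Steps 1--2, all you have is a point $p\in\Gamma$ with $|p-x|\ge 1$; since $\Gamma\subset U$ and $\diam(U)\le\om_0(r_0)<1$, this only yields $d\ge 1-\om_0(r_0)$, not $d\ge 1$, so Step 3 is genuinely where the lemma is proved. But the tools you invoke there --- radial speed of a parametrization, total turning, Gauss--Bonnet on the region bounded by $[x,y_1]\cup\Gamma\cup[y_2,x]$ --- all presuppose that $\pa E$ is at least piecewise $C^{1,1}$ (or rectifiable with a well-defined turning measure), which is exactly the regularity that assumption \eqref{eqn:JV} does not provide; you acknowledge this yourself (``I expect this comparison \ldots\ to be the main obstacle''). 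So the lemma has been reduced to an unproven claim of essentially the same depth, and the natural way to prove that claim under a purely viscosity curvature bound is the comparison construction the paper actually uses --- which, once available, makes Steps 1--2 unnecessary. Step 1 has its own defect: one-sided tangents of $\pa E$ at $y_1,y_2$ need not exist (the viscosity bound forbids exterior touching by small arcs, hence ``outward corners,'' but gives no differentiability), so ``tangential arrival'' is not even well-defined; and Step 2's phrase ``since $E$ sits on the $x$-side'' is an assertion, not an argument: a priori the component of $B_\e(q)\setminus \overline{B}_c(x)$ could lie in $E$ rather than in the exterior, and one must also take $q$ to be the \emph{global} maximum of $|\cdot-x|$ on $\Gamma$ (using that $\pa E\setminus\Gamma$ stays at positive distance from $q$) before one can claim $\pa E\cap B_\e(q)\subset\overline{B}_c(x)$; a local maximum in the parameter does not prevent other strands of $\pa E$ from entering the neighborhood outside the ball.

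For contrast, the paper's proof never leaves the level of touching arcs, which is the correct level of generality here. One fixes $r$ with $\max(d,\om_0(r_0))<r<1$, takes the half-circle $A$ of radius $r$ through $y_1,y_2$ (symmetric about the perpendicular bisector $\ell$ of $[y_1,y_2]$, with center $z$ chosen on the side opposite a crossing point $q\in\g\cap\ell$), and translates it, $A_s=A+s(m-z)$, until the last position at which it meets $\g$. Because $\g$ is trapped in $U$ with $\diam(U)\le\om_0(r_0)<r$, the endpoints of every $A_s$ stay at distance $\ge r-\om_0(r_0)>0$ from $\g$, so the final contact point $w$ lies in the relative interior of the arc; and the degenerate case $\tilde s=0$ is excluded because it would force the crossing point $q$ into $B_d(x)$, impossible since $\g\subset\pa E$. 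The contact at $w$ is then an arc of radius $r<1$ touching $E$ from outside, contradicting Definition~\ref{def:BCVS} directly. This sliding-arc comparison is precisely the substitute for the differential-geometric step your outline is missing: it requires no tangents, no turning, and no distance-maximum analysis.
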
 

\begin{proof} 
Let $\g\subset \pa E$ be the curve homeomorphic to a closed interval and connecting $y_{1}$ and $y_{2}$, according to Definition \ref{def:lochomeo}. We assume for the sake of contradiction that $\rho = \dd(x,\pa E) <1$. Fix $1>r>\max(\rho,\omega_{0}(r_{0}))$, set $m = (y_{1}+y_{2})/2$, and denote by $\ell$ the line orthogonal to $y_{1}-y_{2}$ passing through $m$. 
Choose a point $z\in \ell$ in such a way such that $|z-y_{1}| = r$ and,  for some $t>0$,
\begin{align}\label{eq:q}
q=m+t(m-z)\in \g\cap \ell\,.
\end{align}
We remark that there are either one or two possible choices of $z$.
 Let $A$ be the uniquely defined half of $\pa B_{r}(z)$ which contains $y_{i}$ for $i=1,2$ and is symmetric with respect to $\ell$. We consider the translated arc $A_{s} = A +sv$ with $s\ge 0$ and $v = m-z$. Observe that $A_{s} \cap \g$ is empty when $s$ is sufficiently large. Moreover, the maximum distance of $\g$ from the line $\ell$ is bounded from above by $\omega_{0}(r_{0})$, hence the distance of each endpoint of $A_{s}$ from $\g$ is bounded from below by $r-\omega_{0}(r_{0})>0$ for all $s\ge 0$. 
Then, we let
\[ 
\tilde s = \inf\{ s:  s>0,\ A_{s} \cap \g = \emptyset\}.
\]
The set $F =\g \cap A_{\tilde s}$ is nonempty and consists of points in the relative interior of $A_{\tilde s}$. If $\tilde s = 0,$ then $y_1, y_2 \in F$. Let $p$ be the unique point of $A\cap \ell$ and note that,  since $r>\rho$, we have $p\in B_{\rho}(x)$. Then the point $q$  defined in \eqref{eq:q}, which lies on the segment connecting $m$ and $p$, is contained in $B_{\rho}(x)$. We thus reach a contradiction to the fact that $\g \cap B_{\rho}(x)$ is empty. So, $\tilde s>0$. Since $F$ now only contains points in the relative interior of $\g$, there exists $w\in F$ such that $A_{\tilde s}$ touches $E$ locally from outside at $w$. This contradicts the bound on the curvature of $E$ in the viscosity sense. 
 \end{proof}

We introduce the following definitions:
\begin{definition}\label{def:cutfocal}
Let $x\in E$. Given $y \in P_x$, we let $z_{x,y}(t) := x+t(x-y).$
\begin{enumerate}
\item We call $x$ a cut point if there exists $y \in P_x$ such that 
\[
 \sup \left\{ t:  |z_{x,y}(t) -y | = \dd (z_{x,y}(t) , \pa E)\right\} = 0.
\] 
\item We call $x$ a focal point if there exists $y \in P_x$ such that
\[
\sup \left\{ t: y \text{ is a local minimizer of }\dd(z_{x,y}(t) , \, \cdot \,) \text{ among points in } \pa E\right\} =0.
\]
We call such a $y \in P_x$ a {\it focal projection} of $x$.
\end{enumerate}
\end{definition}
We let $\mathcal{C}$ denote the set of cut points of $E$. Note that any focal point is also a cut point. Furthermore, if $\# P_x >1,$ then $x$ is a cut point, and if $\# P_x = \infty$, then $x$ is a focal point. An example of the cut locus for a set is given in Figure~\ref{fig:cutlocus}.

\begin{figure}[t]
\centering
\includegraphics[scale=1]{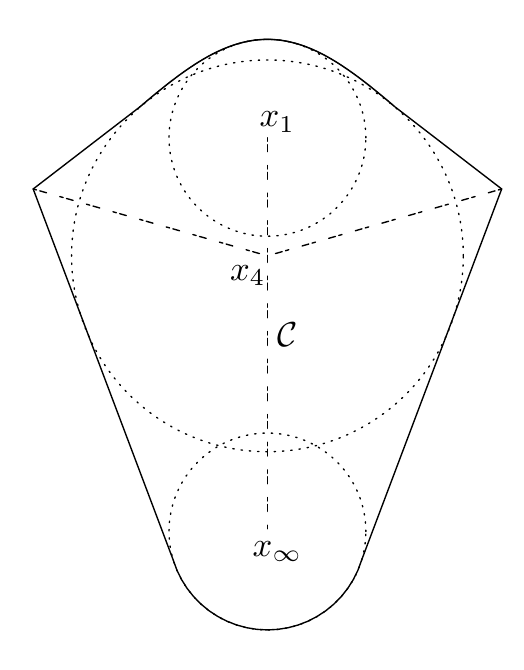}
\caption{The $4$ dashed lines form the cut locus $\mathcal{C}$ of the depicted set. The point $x_1$ is a focal point with unique projection, the point $x_\infty$ is a focal point with infinite projections and the point $x_4$ is a cut point with $4$ projections. All the other points in $\mathcal{C}$ have $2$ projections.} \label{fig:cutlocus}
\end{figure}

Given $\e>0$ and $y \in \pa E$, we denote by $\Sigma_{\e}(y)$ a portion of $\pa E$ containing $y$ with diameter less than $\e$ that is homeomorphic to an open interval. 

\begin{remark}[Geometric interpretation of focal points]\label{rmk:geomfocal} {\rm
Let $x$ be a focal point and let $y\in P_x$ be a focal projection. Let $d = |x- y|$ and $d_t = |z_{x,y}(t) -y|$, so that $ y \in \pa B_d(x) \cap \pa B_{d_t}(z_{x,y}(t))\cap \pa E$. By the definition of focal point, we have
\[
\Sigma_\e (y)
  \cap B_{d}(x) = \emptyset, \qquad 
\Sigma_\e(y)
 \cap B_{d_t}(z_t) \neq \emptyset  \qquad \text{for all } t>0.
\]
}
\end{remark}

\begin{remark}[Nesting property] \label{rmk: nest}{\rm
With the geometric interpretation of focal point in mind, we observe the following nesting property. If there exists $\tau>0$ such that $ y \in \pa E$ minimizes $\dd(z_{x,y}(\tau), \, \cdot \,)$ in $ \Sigma_\e( y)$, then $y$ also minimizes $\dd(z_{x, y}(t), \, \cdot \,)$ in $\Sigma_\e(y)$ for all $ t <\tau.$ Indeed, $\Sigma_\e( y)$ is disjoint from $B_{d}(z_{x, y}(\tau))$ with $d=|z_{x, y}(\tau)-y|$, so it is also disjoint from the ball of smaller radius $|z_{x,y}(t) - y|$ centered at $z_{x, y}(t)$, which is obviously contained in $B_{d}(z_{x, y}(\tau))$.
}
\end{remark}

We have the following corollary of Lemma~\ref{lem: dist 1}.
\begin{corollary}\label{cor: projections} Assume $E \subset \R^2$ satisfies \eqref{eqn:JV}, and suppose $x \in E$ is a cut point with $\dd(x, \pa E) < 1$ and $y \in P_x$. For any $0 < \e < r_0$, with $r_0$ as in Definition~\ref{def:lochomeo}, there exists $\delta_{\e}>0$ such that  every $z \in B_{\delta_{\e}}(x)$ has a unique projection onto $\Sigma_\e(y)$. 
\end{corollary}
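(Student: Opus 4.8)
The plan is to deduce everything from Lemma~\ref{lem: dist 1}, working throughout with the compact closed arc $\overline{\Sigma}=\overline{\Sigma_\e(y)}$ (so that nearest points exist by compactness) and with the quantity $\rho=\dd(x,\pa E)=|x-y|<1$. I denote by $e_1,e_2$ the two endpoints of $\overline\Sigma$, i.e.\ the points of $\overline\Sigma\setminus\Sigma_\e(y)$, noting that $y$ lies in its relative interior. The whole argument is by contradiction: I will choose $\delta_\e$ so small that the existence of a $z\in B_{\delta_\e}(x)$ with two distinct nearest points on $\overline\Sigma$ becomes incompatible with the viscosity curvature bound \eqref{eqn:JV}.

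First I would show that the endpoints are strictly farther from $x$ than $y$, that is $|x-e_i|>\rho$ for $i=1,2$. This is a direct black-box application of Lemma~\ref{lem: dist 1}: if $|x-e_i|=\rho$, then $e_i\in P_x$ (it realises the distance $\rho$ and lies on $\pa E$), while $y\in P_x$ as well; since $e_i\neq y$ and $0<|e_i-y|\le\diam\Sigma_\e<\e<r_0$, the lemma would force $\rho\ge 1$, a contradiction. Setting $\eta=\min_i|x-e_i|-\rho>0$ and choosing $\delta_\e<\min\{1-\rho,\eta/3\}$, a one-Lipschitz estimate on the distance function gives, for every $z\in B_{\delta_\e}(x)$, both $\dd(z,\overline\Sigma)\le|z-y|<\rho+\eta/3$ and $|z-e_i|>\rho+2\eta/3$. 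Hence any nearest point $p$ of $z$ on $\overline\Sigma$ satisfies $|z-p|<|z-e_i|$, so $p$ lies in the relative interior $\Sigma_\e(y)$; moreover $d:=\dd(z,\overline\Sigma)\le\rho+\delta_\e<1$. This guarantees that nearest points are genuine projections onto the open arc $\Sigma_\e(y)$ and that the relevant distance stays below $1$.

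The heart of the matter is uniqueness. Suppose some $z\in B_{\delta_\e}(x)$ had two distinct projections $a\neq b$ onto $\overline\Sigma$; by the previous step both lie in $\Sigma_\e(y)$ and $|z-a|=|z-b|=d<1$. Let $\g'$ be the sub-arc of $\Sigma_\e(y)$ joining $a$ and $b$: it is homeomorphic to a closed interval, is contained in $\pa E$, has $\diam\g'\le\diam\Sigma_\e<\e<r_0<1$, and, crucially, every point of $\Sigma_\e(y)$ (hence of $\g'$) is at distance at least $d$ from $z$, so $\g'\cap B_d(z)=\emptyset$ while $a,b\in\pa B_d(z)$. These are exactly the data fed into the proof of Lemma~\ref{lem: dist 1}, now with $(z,d,\g')$ in the role of $(x,\rho,\g)$: sliding the half-circle of radius $r\in(\max\{d,\diam\g'\},1)$ that is symmetric across the bisector of $a,b$ and passes through them yields either a point of $\g'$ inside $B_d(z)$ or a circular arc touching $E$ from the outside with curvature exceeding $1$, both impossible, so $a=b$.

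I expect the main obstacle to be precisely that I \emph{cannot} invoke Lemma~\ref{lem: dist 1} as a black box for the uniqueness step: a cut point $x$ may have further projections far from $y$, so a nearest point of $z$ on $\Sigma_\e(y)$ need not be a global nearest point on $\pa E$, and $a,b$ need not belong to $P_z$. This is circumvented by re-running the sliding-arc argument on $\g'$ directly, which only uses that $\g'\subset\pa E$ avoids the open ball $B_d(z)$ and has its endpoints on $\pa B_d(z)$ — properties that hold by the very definition of $d$ as the distance to the arc. The contradiction then shows the projection onto $\Sigma_\e(y)$ is unique for every $z\in B_{\delta_\e}(x)$.
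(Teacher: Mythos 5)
Your proof is correct and is essentially the paper's own argument: the paper likewise fixes $\delta_\e$ so that $\dd(z,\Sigma_\e(y))<1$ for all $z\in B_{\delta_\e}(x)$, notes that two nearest points $y_1,y_2\in\Sigma_\e(y)$ satisfy $|y_1-y_2|\le\e<r_0$, and then concludes ``$y_1=y_2$ by Lemma~\ref{lem: dist 1}'' --- a black-box citation which, exactly as you observe, is not literally licensed (the $y_i$ need not belong to $P_z$, since $z$ may have closer boundary points far from $y$), and whose intended justification is precisely your re-run of the sliding-arc argument, which only uses that the small sub-arc joining the two nearest points avoids the open ball $B_{d}(z)$ together with the viscosity curvature bound. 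Your preliminary endpoint step (a legitimate application of Lemma~\ref{lem: dist 1} at $x$, which the paper skips) is an addition rather than a deviation, and has the side benefit of establishing existence, not only uniqueness, of the projection onto the open arc $\Sigma_\e(y)$.
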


\begin{proof} 
Fix $\delta_{\e}>0$ small enough so that $d_z= \dd(z,\Sigma_\e(y))<1$ for all $z \in B_{\delta_{\e}}(x).$ Assume that $y_1,y_2 \in \Sigma_\e(y) \cap \pa B_{d_z}(z)$ for some $z\in B_{\delta_{\e}}(x)$. This implies that $|y_1-y_2|\le \e<r_0$. Hence,  $y_1=y_2$ by Lemma~\ref{lem: dist 1}.
\end{proof}

Next, we show that focal points lie at distance at least $1$ from $\pa E$.

\begin{lemma}\label{prop: focal point distance} Suppose $E \subset \R^2$ satisfies \eqref{eqn:JV} and let $r_{0}$ be as in Definition \ref{def:lochomeo}. If $x \in E$ is a focal point, then $\dd(x, \pa E) \geq 1.$
\end{lemma}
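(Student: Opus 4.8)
The plan is to prove that a focal point $x$ must satisfy $\dd(x,\pa E)\geq 1$ by contradiction, exploiting the geometric interpretation in Remark~\ref{rmk:geomfocal} together with the viscosity curvature bound. So suppose $x$ is a focal point with focal projection $y\in P_x$, and assume $\rho=\dd(x,\pa E)<1$. The idea is that as we push the center $z_{x,y}(t)$ outward along the ray from $y$ through $x$, the balls $B_{d_t}(z_{x,y}(t))$ grow and, by the focal property, immediately start to ``eat into'' $\pa E$ near $y$ while the point $y$ remains on the boundary sphere. I would like to convert this local penetration into a circular arc touching $E$ from outside, contradicting the viscosity bound (Definition~\ref{def:BCVS}).

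First I would set up notation following the remark: let $d=|x-y|=\rho$, $z_t=z_{x,y}(t)$, and $d_t=|z_t-y|=(1+t)\rho$. By the definition of focal point, for every $t>0$ the portion $\Sigma_\e(y)$ meets $B_{d_t}(z_t)$, while $\Sigma_\e(y)\cap B_\rho(x)=\emptyset$. The natural strategy is to enlarge $t$ until the radius $d_t$ reaches exactly $1$, i.e. take $t^*$ with $(1+t^*)\rho=1$; since $\rho<1$ this $t^*$ is positive and finite. The ball $B_1(z_{t^*})$ has $y$ on its boundary. Now I would compare this ball of radius exactly $1$ against $\pa E$ near $y$. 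The key point I want is that $\pa B_1(z_{t^*})$, or rather a small circular arc of it near $y$, touches $E$ from \emph{outside}, which would immediately contradict the viscosity curvature bound since such an arc has curvature exactly $1$ and the definition forbids touching arcs of radius $r<1/h=1$ — but here radius equals $1$, so I must be careful and instead use a ball of radius strictly larger than $1$.

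The cleaner approach, which I expect is the right one, is this: because for all $t>0$ the set $\Sigma_\e(y)$ penetrates $B_{d_t}(z_t)$, and by the nesting property in Remark~\ref{rmk: nest} the penetration persists monotonically, I would choose $t$ just large enough that $d_t>1$ while $\Sigma_\e(y)$ still stays outside $B_\rho(x)=B_{d_0}(z_0)$ near $y$. The geometry of the growing balls forces a point $w\in\Sigma_\e(y)$, distinct from $y$ and close to it, to lie on or inside $\pa B_{d_t}(z_t)$ with $y\in\pa B_{d_t}(z_t)$ as well; applying Lemma~\ref{lem: dist 1} (with the center $z_t$ playing the role of $x$, once $d_t<1$) or Corollary~\ref{cor: projections} constrains how $\Sigma_\e(y)$ can meet these spheres. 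Concretely, I would argue that if $\rho<1$ one can slide an arc of a circle of radius slightly larger than $\rho$ but still less than $1$ so that it first separates from $\pa E$ and then, upon growing, touches $\pa E$ from outside at an interior point $w$ of $\Sigma_\e(y)$, exactly as in the sliding-arc argument of Lemma~\ref{lem: dist 1}. This touching arc has radius $<1$, contradicting the viscosity bound.

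The main obstacle, and the step requiring the most care, is controlling the behavior of $\Sigma_\e(y)$ near $y$ as the ball grows: I must ensure that the first contact of the growing (or sliding) circle with $\pa E$ occurs at an interior point of $\Sigma_\e(y)$ rather than at $y$ itself or at the endpoints of the arc, so that the viscosity definition genuinely applies. This is precisely where the focal property is essential — it guarantees immediate penetration for all $t>0$, so $y$ is \emph{not} a local minimizer of $\dd(z_t,\cdot)$ and hence the contact cannot be pinned at $y$. I would combine this with the uniform lower bound on the distance of the sliding-arc endpoints from $\g$ (as in Lemma~\ref{lem: dist 1}, using $\omega_0(r_0)<1$ and $r_0<1/4$) to keep the first contact strictly interior. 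Once interior contact at radius strictly less than $1$ is secured, the viscosity bound is violated and the contradiction closes the proof, yielding $\dd(x,\pa E)\geq 1$.
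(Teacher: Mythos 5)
Your overall direction --- reducing to the viscosity curvature bound via the sliding-arc construction of Lemma~\ref{lem: dist 1} --- is indeed half of the correct proof, but there is a genuine gap, and it sits exactly where you place your emphasis. Writing $z_t=z_{x,y}(t)$ and $d_t=|z_t-y|$, you insist that the contact of the sliding circle must occur at an interior point $w$ of $\Sigma_\e(y)$ \emph{distinct from} $y$, and you justify this by claiming that focality (``immediate penetration'') prevents the contact from being pinned at $y$. That inference is false: penetration only says $\Sigma_\e(y)\cap B_{d_t}(z_t)\neq\emptyset$; it does \emph{not} say that $\Sigma_\e(y)$ ever leaves $\overline{B}_{d_t}(z_t)$. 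In the most typical focal configuration, $\pa E$ near $y$ curves strictly into the ball and meets the sphere $\pa B_{d_t}(z_t)$ only at $y$, so that $\Sigma_\e(y)\subset\overline{B}_{d_t}(z_t)$ for all small $\e$: then there are no crossing points at all, hence no sub-arc of $\pa E$ outside the ball to slide an arc onto, and your scheme has nothing to work with. The paper treats this as its first case, and the resolution is the opposite of what you propose: contact at $y$ is not an obstruction but the mechanism. Since $B_d(x)\subset E$ is internally tangent to $B_{d_t}(z_t)$ at $y$, the sphere $\pa B_{d_t}(z_t)$ touches $E$ from outside \emph{at} $y$, Definition~\ref{def:BCVS} applies there directly (the definition only requires the touching point to lie in the relative interior of the arc, which $y$ does), and one gets $d_t\ge 1$ immediately. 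Only in the complementary case, when $\Sigma_\e(y)\setminus\overline{B}_{d_t}(z_t)\neq\emptyset$ for every $\e$, does one combine this with penetration to extract \emph{two} points $y_1,y_2\in\Sigma_\e(y)\cap\pa B_{d_t}(z_t)$ bounding a sub-arc disjoint from the open ball, and then rerun the sliding argument. Your proposal never establishes (or even identifies) this crossing structure, which is the indispensable input of that argument; as written, the proof fails in the first case.

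Two further points. First, you suggest ``applying Lemma~\ref{lem: dist 1} with the center $z_t$ playing the role of $x$,'' but the hypotheses of that lemma require $y_1,y_2\in P_{z_t}$, i.e.\ genuine nearest points; by penetration $\dd(z_t,\pa E)<d_t$, so no point of $\pa E$ on the sphere $\pa B_{d_t}(z_t)$ is a projection of $z_t$, and the lemma cannot be invoked as a black box --- its sliding construction must be repeated with the crossing points (this is what the paper means by ``arguing exactly as in the proof of Lemma~\ref{lem: dist 1}'' with $r=\max\{d_t,r_0\}$). Second, a sign of confusion worth flagging even though you abandon that route: to violate the viscosity bound you need a touching arc of radius strictly \emph{less} than $1$, so your remark that one should ``instead use a ball of radius strictly larger than $1$'' is backwards --- such an arc contradicts nothing. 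The correct structure is the paper's dichotomy, applied for each $t>0$: either $\Sigma_\e(y)$ stays in $\overline{B}_{d_t}(z_t)$ (then viscosity at $y$ gives $d_t\ge1$), or it crosses the sphere (then the sliding argument gives $d_t\ge1$); since $d_t\to\dd(x,\pa E)$ as $t\to0^+$, this yields $\dd(x,\pa E)\ge1$ with no global contradiction argument needed.
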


\begin{proof}
Let $y \in P_x$ be a focal projection for $x$. Let $d = |x- y|$ and $d_t = |z_{x,y}(t) -  y|.$
We show that $d_t  \geq 1$ for all $t>0$.

First, suppose that $\Sigma_\e( y)$ is contained in $\overline{ B}_{d_t}(z_{x,y}(t))$ for $\e>0$ sufficiently small. In this case, $\pa B_{d_t}(z_{x,y}(t))$ in a neighborhood of $y$ is a circular arc touching $E$ from outside at $ y$. So, $d_t \geq 1$ as the curvature of $E$ is bounded above by $1$ in the viscosity sense.

Next, suppose that $\Sigma_\e(y) \setminus \overline{ B}_{d_t}(z_{x,y}(t))$ is nonempty for every $\e>0$. Then, since $x$ is a focal point, for $\e<r_0$, there exist distinct points $y_1,y_2 \in \Sigma_\e(y) \cap \pa  B_{d_t}(z_{x,y}(t))$ such that the following property holds: if we let $\g$ denote the subset of $\Sigma_\e(y)$ that is homeomorphic to an interval and has endpoints $y_1$ and $y_2$, then
$\g \cap  B_{d_t}(z_{x,y}(t))$ is empty. 
 Now, arguing 
by contradiction exactly as in the proof of Lemma ~\ref{lem: dist 1} and taking $r = \max\{ d_t, r_0\}$, we finally show that $r\geq 1$. As $r_0 <1/4$, we determine that $d_t \geq 1$, concluding the proof.
\end{proof}

\begin{lemma}\label{prop: at least one} Suppose $E \subset \R^2$ satisfies \eqref{eqn:JV} and let $x \in E $ be a cut point. Then at least one of the following alternatives holds:
\begin{enumerate}
\item $\# P_x >1$,
\item $x$ is a focal point.
\end{enumerate}
\end{lemma}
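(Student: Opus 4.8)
The plan is to prove the dichotomy by disposing of the easy case and reducing the other to a compactness argument. If $\#P_x > 1$ then alternative (1) holds and there is nothing to prove, so I would assume $P_x = \{y\}$ is a singleton and show that $x$ must then be a focal point. Since $x$ is a cut point, the witness in the definition of cut point is forced to be this unique $y$; thus $\sup\{t : |z_{x,y}(t) - y| = \dd(z_{x,y}(t), \pa E)\} = 0$, which means that for every $t > 0$ the point $y$ fails to be a \emph{global} nearest point of $z_{x,y}(t)$ on $\pa E$, i.e. $\dd(z_{x,y}(t), \pa E) < |z_{x,y}(t) - y|$.

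Arguing by contradiction, suppose $x$ is not a focal point. Applied to the unique projection $y$, this says that $\sup\{t : y \text{ is a local minimizer of } \dd(z_{x,y}(t), \cdot)\} > 0$, so there is $\tau > 0$ for which $y$ locally minimizes the distance from $z_{x,y}(\tau)$. By the nesting property (Remark~\ref{rmk: nest}) there is then a single $\e \in (0, r_0)$ such that $y$ is a local minimizer of $\dd(z_{x,y}(t), \cdot)$ in $\Sigma_\e(y)$ for all $t \in (0, \tau)$. The key consequence is a uniform separation: for each such $t$, pick a global minimizer $\hat y_t \in P_{z_{x,y}(t)}$, so that $|z_{x,y}(t) - \hat y_t| = \dd(z_{x,y}(t), \pa E) < |z_{x,y}(t) - y|$; local minimality in $\Sigma_\e(y)$ forces every point of $\Sigma_\e(y)$ to be at least as far from $z_{x,y}(t)$ as $y$, so $\hat y_t$ must lie in the compact set $\pa E \setminus \Sigma_\e(y)$. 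Hence $|\hat y_t - y| \ge c := \dd(y, \pa E \setminus \Sigma_\e(y)) > 0$, the positivity of $c$ coming from the fact that $y$ lies in the relative interior of the arc $\Sigma_\e(y)$.

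I would then let $t \to 0^+$. Since $z_{x,y}(t) \to x$ and the points $\hat y_t$ range in a compact set, a subsequence converges to some $\hat y \in \pa E \setminus \Sigma_\e(y)$, and passing to the limit in $|z_{x,y}(t) - \hat y_t| < |z_{x,y}(t) - y|$ gives $|x - \hat y| \le |x - y| = \dd(x, \pa E)$; as $\hat y \in \pa E$ the reverse inequality is automatic, so $\hat y \in P_x$. But $|\hat y - y| \ge c > 0$ means $\hat y \neq y$, contradicting $P_x = \{y\}$. Therefore $x$ is a focal point, completing the dichotomy.

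The step I expect to be the crux is establishing the \emph{uniform} lower bound $|\hat y_t - y| \ge c$: it is precisely here that the distinction between ``$y$ fails to be a global minimizer for every $t>0$'' (the cut condition) and ``$y$ fails to be even a local minimizer for every $t>0$'' (the focal condition) is exploited. The nesting property is what upgrades a single local-minimality instant $\tau$ into local minimality on the whole interval $(0,\tau)$, and local minimality in the \emph{fixed} neighborhood $\Sigma_\e(y)$ is what keeps the competing nearest points $\hat y_t$ from collapsing onto $y$ as $t \to 0^+$; without this separation the limiting point $\hat y$ could coincide with $y$ and the argument would not produce a second projection.
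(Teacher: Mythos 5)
Your proof is correct and takes essentially the same approach as the paper's: both reduce to the case $P_x=\{y\}$, use the cut condition together with non-focality (via the nesting property) to force the projections of $z_{x,y}(t)$ to stay outside a fixed arc $\Sigma_\e(y)$, and then use compactness of $\pa E$ and continuity of the distance to extract a limit of these projections that is a second element of $P_x$, contradicting uniqueness. The only difference is organizational: the paper first proves, unconditionally, that $P_{z_{x,y}(t)}\subset\Sigma_\e(y)$ for small $t$ (by the very same compactness argument you use at the end) and then contradicts this with the non-focality assumption, whereas you run the two steps in the reverse order within a single contradiction.
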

\begin{proof} 
We show that if $x $ is a cut point with $\# P_x =1$, then $x$ is necessarily a focal point. Let $y$ be the unique point in $P_x$. We first prove the following claim: {\it for all $\e>0$, there exists $t_\e>0$ such that}
\[
P_{z_{x,y}(t)} \subset \Sigma_\e (y) \qquad \forall\, 0 <t < t_\e\,.
\] 
If not, we may find $\e>0$, $\{t_n\},$ and $\{y_n\}$ such that 
\[
0<t_n \to 0, \qquad y_n \in P_{z_{x,y}(t_n)}, \qquad|y_n -y|\ge \e\,.
\]
Since $\pa E$ is compact, up to a not-relabeled subsequence, $y_n \to y_1 \in \pa E$ with $y_1 \not\in \Sigma_{\e}(y)$. In particular, $y_1 \neq  y$. On the other hand,
we deduce that $y_1 \in P_x$, because, by continuity, we have
\[
\dd(z_{x,y}(t_n) , \pa E) \to \dd(x, \pa E) = |x- y|
\]
and 
\[
\dd(z_{x,y}(t_n) , \pa E)  = |z_{x,y}(t_n) -y_n|  \to |x- y_1|\,.
\]
Thus $|x-y_{1}| = |x-y|$, which contradicts $\#P_x =1$ and demonstrates our claim.

Now we let $d_t =| z_{x,y}(t) - y|$ and assume for contradiction that $x$ is not a focal point. Then, for $\e>0$ and $t>0$ small, the intersection of $\Sigma_\e(y)$ and $B_{d_t}(z_{x,y}(t))$ must be empty. 
Then, because $x$ is a cut point, $ y \not\in P_{z_{x,y}(t)}$, and therefore $ P_{z_{x,y}(t)}\cap \Sigma_\e(y)$ is empty, for any $t >0$. We have thus reached a contradiction with the previous claim, and the proof is complete.
\end{proof}

\begin{proposition}\label{lem: structure}Suppose $E \subset \R^2$ satisfies \eqref{eqn:JV}. Suppose $x \in E$ is a cut point with $\dd(x, \pa E) <1$ and $\# P_x =k \geq 2.$ Then, there exists $\delta>0$ such that $\mathcal{C} \cap B_{\delta}(x)$ is the union of $k$ curves of class $C^{1}$ meeting at $x$. \end{proposition}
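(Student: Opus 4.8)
The plan is to realize $\mathcal{C}$ near $x$ as the ``ridge set'' of $k$ smooth local distance functions, one attached to each projection. Write $P_x = \{y_1,\dots,y_k\}$, set $\rho = \dd(x,\pa E)<1$, and let $u_i = (y_i-x)/\rho$ be the pairwise distinct unit vectors pointing from $x$ to its projections. Fix $\e\in(0,r_0)$ small enough that the arcs $\Sigma_\e(y_i)$ are pairwise disjoint, and apply Corollary~\ref{cor: projections} to obtain $\delta_\e>0$ such that every $z\in B_{\delta_\e}(x)$ has a unique projection $\pi_i(z)$ onto $\Sigma_\e(y_i)$; set $d_i(z)=|z-\pi_i(z)|$. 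I would first upgrade uniqueness of $\pi_i$ to continuity of $\pi_i$ by a standard compactness argument (if $z_n\to z$ and $\pi_i(z_n)\to p$, then $p$ realizes $d_i(z)$, so $p=\pi_i(z)$). Continuity of $\pi_i$ and $d_i>0$ then give that $d_i$ is of class $C^1$ on $B_{\delta_\e}(x)$ with $\na d_i(z)=(z-\pi_i(z))/d_i(z)$, and in particular $\na d_i(x)=(x-y_i)/\rho=-u_i$. Finally, since $\pa E\setminus\bigcup_i\Sigma_\e(y_i)$ is compact and lies at distance strictly larger than $\rho$ from $x$, after shrinking $\delta$ I may assume $\dd(z,\pa E)=\min_i d_i(z)$ on $B_\delta(x)$ and that every projection of such a $z$ onto $\pa E$ equals some $\pi_i(z)$ with $d_i(z)=\min_j d_j(z)$.

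Next I would pin down the cut locus near $x$. Because $\rho<1$, after a further shrinking of $\delta$ we have $\dd(z,\pa E)<1$ on $B_\delta(x)$, so by Lemma~\ref{prop: focal point distance} no point of $B_\delta(x)$ is a focal point. Combining Lemma~\ref{prop: at least one} with the observation following Definition~\ref{def:cutfocal} (that $\#P_z>1$ forces $z$ to be a cut point) yields, for $z\in B_\delta(x)$, the clean equivalence $z\in\mathcal{C}\iff \#P_z\ge 2$. By the first paragraph this is precisely the condition that $\min_i d_i(z)$ be attained by at least two indices, so that
\[
\mathcal{C}\cap B_\delta(x)=\big\{z\in B_\delta(x):\ \#\{i:\ d_i(z)=\min_j d_j(z)\}\ge 2\big\}.
\]

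It then remains to carry out the geometric analysis of this ridge set. Using the first-order expansion $d_i(z)=\rho-u_i\cdot(z-x)+o(|z-x|)$ together with elementary inequalities for $\theta\mapsto u(\theta)\cdot w=|w|\cos(\theta-\arg w)$, I would order the $u_i$ cyclically by angle and prove that (i) at any $z\neq x$ in $\mathcal{C}\cap B_\delta(x)$ the minimum is attained by exactly two cyclically consecutive indices — a third minimizing index, or two non-consecutive minimizers, would force an intermediate index to have strictly smaller $d$, which is only possible for $z=x$; and (ii) for each consecutive pair $(i,i+1)$ the equation $d_i=d_{i+1}$ defines, via the implicit function theorem (the gradient $u_{i+1}-u_i$ at $x$ being nonzero), a $C^1$ curve through $x$, exactly one half of which satisfies $d_i=d_{i+1}\le d_l$ for every $l$. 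These $k$ half-curves are the prongs of $\mathcal{C}$; their tangent directions at $x$ bisect the consecutive angular gaps between the $u_i$, hence are pairwise distinct, so the prongs meet only at $x$. This exhibits $\mathcal{C}\cap B_\delta(x)$ as a union of $k$ curves of class $C^1$ meeting at $x$.

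The main obstacle is the regularity and first-order bookkeeping for the $d_i$: one must genuinely upgrade the \emph{uniqueness} of local projections furnished by Corollary~\ref{cor: projections} to \emph{continuity} of $\pi_i$, and thence to $C^1$ regularity of $d_i$ with the correct gradient $-u_i$, since the whole tangent-direction computation rests on this. The remaining delicate point is to make the heuristic linear expansions rigorous, i.e.\ to verify that the strict first-order inequalities separating the cells persist against the $o(|z-x|)$ errors once $\delta$ is small, thereby ruling out spurious branches and guaranteeing that only consecutive pairs contribute and that each contributes precisely one prong.
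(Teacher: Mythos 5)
Your proposal is correct and its skeleton coincides with the paper's proof: both set up pairwise disjoint arcs $\Sigma_\e(y_i)$, invoke Corollary~\ref{cor: projections} to get unique (hence continuous) local projections and thereby $C^1$ distance functions with gradient $(z-\pi_i(z))/d_i(z)$, characterize $\mathcal{C}\cap B_\delta(x)$ as the set where at least two of the $d_i$ attain $\min_j d_j$ (via Lemma~\ref{prop: focal point distance}, Lemma~\ref{prop: at least one}, and the remark that $\#P_z>1$ implies $z$ is a cut point), and produce each branch by the implicit function theorem applied to $d_i-d_{i+1}$, whose gradient $u_{i+1}-u_i$ at $x$ is nonzero. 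The genuine difference is how the two arguments rule out non-consecutive or triple ties. The paper does it with an \emph{exact} geometric argument: the segments $[y_i,x]$ cut $B_d(x)$ into $k$ sectors $S_i$, and if $z\in S_i\setminus\{x\}$ had a projection $y\in\Sigma_j$ with $j\neq i,i+1$, the segment $[y,z]$ would cross some $[y_i,x]$ at a point $\bar z\neq x$; since $B_{|\bar z-y_i|}(\bar z)\subset B_d(x)$ touches $\pa B_d(x)$ only at $y_i$, the triangle inequality gives $|z-y_i|<|z-y|$, a contradiction. This inclusion $P_z\subset\Sigma_i\cup\Sigma_{i+1}$ holds at \emph{every} point of the sector, with no asymptotics. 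You instead use the first-order expansions $d_i(z)=\rho-u_i\cdot(z-x)+o(|z-x|)$ and cosine comparisons, which is workable but requires absorbing the $o(|z-x|)$ errors — the delicate point you yourself flag. It can be closed by compactness: a sequence $z_n\to x$ of counterexamples with a fixed tying set and $(z_n-x)/|z_n-x|\to w$ forces, after dividing pairwise differences by $|z_n-x|$, either two of the distinct unit vectors $u_i$ to coincide (triple tie) or, for a non-consecutive tying pair, the existence of an intermediate $u_l$ with $u_l\cdot w$ strictly larger, whence $d_l(z_n)<d_i(z_n)$ for large $n$; either way a contradiction, and the same argument shows each IFT curve contributes exactly one prong (both halves when $k=2$). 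So your route is sound and, once this bookkeeping is written out, complete; it is more analytic and would survive in settings where the exact ball-inclusion geometry is unavailable, whereas the paper's sector argument reaches the same cell decomposition with no error terms at all.
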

\begin{proof}
Let $P_x = \{y_1, \dots, y_k\}$ and $d = \dd(x, \pa E)$ and, for any $\e>0$ and $i=1,\dots,k$, define $\Sigma_{i} = \Sigma_{\e}(y_{i})$. Choose $\e < r_0/4$ small enough such that $\Sigma_i$ and $\Sigma_j$ have a positive distance for all $i \neq j$.  

Observe that there exists $r>d$ such that $B_r(x) \cap \pa E \subset  \bigcup_{i=1}^k \Sigma_i$. Hence, for $\delta>0$ sufficiently small, 
\[
P_z \subset \bigcup_{i=1}^k \Sigma_i
\]
 for all $z \in B_{\delta}(x)$. 
Up to further decreasing $\delta$, we may also assume that  $0<\delta <\delta_\e$, where $\delta_\e$ is as in Corollary~\ref{cor: projections}. Consequently, each $z\in B_\delta(x)$ has a unique projection $y_z^i$ onto $\Sigma_i$ for each $i \in \{1, \dots, k\}$. The functions $\rho_i: B_\delta(x) \to \R$ defined by
\[
\rho_i(z) = \dd(z, \Sigma_i)
\]
are therefore continuously differentiable at every $z \in B_\delta(x)$ with
\[
\na \rho_i (z) = \frac{z-y_z^i}{|z-y_z^i|} \qquad \forall z \in B_\delta(x)
\]
We remark that the continuity of the differential depends on the continuity of the map $z\mapsto y^{i}_{z}$, which in turn follows from the uniqueness of the projection of $z$ onto $\Sigma_i$.

Now we split the proof into two cases.

\medskip
\noindent{\it Case 1: $k  =2$.} Consider the differentiable function $f: B_\delta(x) \to \R$ defined by $f(z) = \rho_1(z) - \rho_2(z)$. 
Note that $\nabla f(x) \neq 0$, hence, up to a rotation and reducing $\delta$, we can assume that $\pa_{z_1} f(z)>0$ for all $z \in B_{\delta}(x).$ Applying the implicit function theorem, we see that the set $\{ f=0\}$ is the graph of a $C^{1}$ function of $z_2$ defined in a neighborhood of $x_{2}$. Since $\{f = 0\} = \mathcal{C} \cap B_\delta(x)$, the proof is complete in this case. 

\medskip
\noindent{\it Case 2: $k  >2$.} The union of the segments $[y_i,x]$ divide $B_d(x)$ into $k$ circular sectors $S_{1},\dots,S_{k}$. We possibly relabel the points $y_{i}$ in such a way that $y_{i}$ and $y_{i+1}$ are associated with the sector $S_{i}$ for $i=1,\dots k$ (where $k+1$ is identified with $1$). 
We claim that if $z \in B_{\delta}(x) \cap S_{i}\setminus \{x\},$ then $P_z \subset \Sigma_i \cup \Sigma_{i+1}$. Indeed, we already know that $P_{z}\subset \bigcup_{j=1}^{k}\Sigma_{j}$, thus we only have to show that whenever $j\neq i,i+1$ we have $P_{z}\cap \Sigma_{j} = \emptyset$. By contradiction, suppose that there exists $y\in P_{z}\cap \Sigma_{j}$ for some $j\neq i,i+1$. In this case the segment $[y,z]$ intersects either $[y_{i},x]$ or $[y_{i+1},x]$ at some $\bar z$. Without loss of generality, let us suppose that it is $[y_{i},x]$. First we notice that $\bar z\neq x$, otherwise we would have that $P_{x} = \{y\}$, i.e., a contradiction with the assumption on $x$. On the other hand, for $\bar z\neq x$ the ball $B_{|\bar z - y_{i}|}(\bar z)$ is contained in $B_{d}(x)$ and the boundaries of these balls touch only at $y_{i}$. Hence,
we obtain
\[
|z - y_{i}|\le |z-\bar z| + |\bar z-y_{i}| < |z-\bar z| + |\bar z-y| = |z-y|\,,
\]
contradicting the fact that $y\in P_{z}$. This shows our claim. 
Arguing as in Case 1 we find that the set of points that are equidistant from $\Sigma_i$ and $\Sigma_{i+1}$ form a $C^{1}$ curve $\g_{i}$ in $B_{\delta}(x)$, and the previous claim implies 
\begin{equation}\label{eq:g12inC}
\gamma_{i}\cap B_{\delta}(x) \cap S_{i}\ \subset\ \mathcal{C}\cap B_{\delta}(x) \cap S_{i}\,.
\end{equation}
We now show the opposite inclusion in \eqref{eq:g12inC}. If $z\in \mathcal{C}\cap B_{\delta}(x) \cap S_{i}$, since $\dd(z,\pa E) <1$, we deduce from Lemma \ref{prop: focal point distance} that $z$ cannot be a focal point. Thus, owing to Lemma~\ref{prop: at least one} we infer that $\# P_{z} \ge 2$. As previously noted, $z$ must have a unique projection onto $\Sigma_{i}$ and onto $\Sigma_{i+1}$, so $z$ must have exactly one projection in each. This means that $z\in \g_{i}$, as desired, so
\[
\gamma_{i}\cap B_{\delta}(x) \cap S_{i}\ =\ \mathcal{C}\cap B_{\delta}(x) \cap S_{i}\,.
\]
Repeating the previous argument for each sector $S_{i}$ we finally obtain that $\mathcal{C}\cap B_{\delta}(x)$ is the union of exactly $k$ curves $\gamma_{1},\dots,\gamma_{k}$ of class $C^{1}$ meeting at $x$. 
\end{proof}

We are now ready to prove Theorem~\ref{theorem:Pestov and Ionin}.

\begin{proof}[Proof of Theorem~\ref{theorem:Pestov and Ionin}] Up to rescaling, we may assume that $h=1$. Suppose for the sake of contradiction that $\dd(x, \pa E) <1$ for all $x \in \overline E$. In particular, by Lemma~\ref{prop: focal point distance}, $\C$ contains no focal points, and so owing to Lemma~\ref{prop: at least one}, $1< \# P_x <\infty$ for all $x \in \C.$ Then, Proposition~\ref{lem: structure} implies that $\C$ is the union of $C^{1}$ curves. In particular, $\C$ is a topological graph where each vertex has finite valence.

We claim that $\C$ contains no non-trivial loops.
%\replaceR{ If not, then, since $E$ is simply connected, one can find some $x \in E \setminus \C$ in the interior of the loop, so that any continuous curve from $x$ to $\pa E$ intersects $\C$. As $x \not\in \C$, it must have a unique projection onto $\pa E$, that is, $P_x =\{y\}$. Let $\ell$ be the line segment between $x$ and $y$, and let $z\in \ell \cap \C$. Since $z \in \C$, there exists some $y_z \in P_z$, $y_z \neq y$. However, this implies that 
% $$|x- y_z | \leq |x-z| + |z- y_z | \leq |x-z| + |z-y| = |x-y| = \dd(x, \pa E).$$
% It follows that $y_z \in P_x$, so $x \in \C$. This yields a contradiction.
% }{
Indeed, $E$ is simply connected, so any $x$ in the interior of a loop is contained in $E$. Take $y \in P_x$, let $\ell$ be the line segment between $x$ and $y$, and note that there is some $z\in \ell \cap \C$. Since $z \in \C$, there exists some $y_z \neq y$ in $P_z$. So,
  $$|x- y_z | \leq |x-z| + |z- y_z | \leq |x-z| + |z-y| = |x-y| = \dd(x, \pa E).$$
 It follows that $y_z \in P_x$, so $x \in \C$.
%}
Therefore, $\C$ is a union of disjoint trees. 

We now claim that $\mathcal{C}$ is compact. Note that $\mathcal C$ is bounded and consider a sequence of cut points $\{x_i\}$ converging to some $x$, such that $\dd(x_i, \pa E)<1$ and $x_{i}$ is not a focal point for any $i$. As $\{x_i\}\subset \mathcal{C}$, we can take two sequences $\{y_i\}$ and $\{w_i\}$ such that $y_i \neq w_i$ and $|x_i - y_i|=|x_i-w_i| = \dd(x_i, \pa E)$ for all $i$. By compactness, $y_i \to y \in \pa E$ and $w_i \to w \in \pa E$ up to subsequences. Furthermore, $y\neq w$; otherwise we would have $|y_i - w_i| \to 0$, and so Lemma \ref{lem: dist 1} would imply $\dd(x_{i}, \pa E) \geq 1$ for $i$ large enough, against our assumption. By the continuity of the distance function, $\{y,w\} \subset P_x$. So, $x$ is a cut point and $\C$ is compact. Then, $\C$ is necessarily a \emph{finite} union of trees, thanks to the local structure of the cut locus as given by Proposition \ref{lem: structure}. Finally, $\C$ must have at least one vertex $v$ of valence $1$, and again by Proposition~\ref{lem: structure}, $\dd(v, \pa \Om) \geq 1$, yielding a contradiction.
\end{proof}

\section{Proof of Theorem~\ref{lem:eraser}}\label{sec: eraser}

We will now prove Theorem \ref{lem:eraser}, which seems interesting in its own right and may be a useful tool in other settings. In our setting, it allows us to drop the regularity of the curve of centers that is considered in the rolling ball lemma, Lemma~\ref{lem:rollingball}. We recall that it states that, given a planar curve $\g$ and a constant $r>0$, one can regularize $\g$ to a curve $\sigma$ of class $C^{1,1}$ with curvature bounded by $1/r$ and same endpoints as $\g$ such that the set swept by a ball of radius $r$ rolled along $\g$ contains the set swept by a ball with the same radius rolled along $\sigma$, as shown in Figure~\ref{fig:eraser}.
\begin{figure}[t]
\centering
\definecolor{ffffff}{rgb}{1.,1.,1.}
\definecolor{grigio}{rgb}{.9,.9,.9}
\begin{tikzpicture}[line cap=round,line join=round,>=triangle 45,x=.5cm,y=.5cm]
\clip(-5.2,-3.2) rectangle (7.2,6.2);

\fill[fill=grigio] (-4.01,3.) -- (-1.99,3.) -- (-1.99,1.) -- (-4.01,1.) -- cycle;
\fill[fill=grigio] (0.,5.01) -- (0.,2.99) -- (-2.,2.99) -- (-2.,5.01) -- cycle;
\fill [shift={(-1.,5.)},fill=grigio]  (0,0) --  plot[domain=0.:3.141592653589793,variable=\t]({1.*1.*cos(\t r)+0.*1.*sin(\t r)},{0.*1.*cos(\t r)+1.*1.*sin(\t r)}) -- cycle ;
\fill [shift={(-4.,2.)},fill=grigio]  (0,0) --  plot[domain=1.5707963267948966:4.71238898038469,variable=\t]({1.*1.*cos(\t r)+0.*1.*sin(\t r)},{0.*1.*cos(\t r)+1.*1.*sin(\t r)}) -- cycle ;

\fill[color=black,fill=black,fill opacity=0.4] (-1.,-1.) -- (0.,-2.) -- (-1.,-2.) -- cycle;
\fill[color=black,fill=black,fill opacity=0.4] (0.,2.) -- (-1.,3.) -- (-2.,2.) -- (-1.,1.) -- cycle;

\draw [line width=0.4pt,dash pattern=on 2pt off 2pt] (-2.,2.)-- (4.,2.);
\draw [shift={(4.,0.)},line width=0.4pt,dash pattern=on 2pt off 2pt]  plot[domain=-1.5707963267948966:1.5707963267948966,variable=\t]({1.*2.*cos(\t r)+0.*2.*sin(\t r)},{0.*2.*cos(\t r)+1.*2.*sin(\t r)});
\draw [line width=0.4pt,dash pattern=on 2pt off 2pt] (4.,-2.)-- (-1.,-2.);
\draw [line width=0.4pt,dash pattern=on 2pt off 2pt] (-1.,-2.)-- (-1.,3.);

\draw [shift={(4.,0.)}] plot[domain=-1.5707963267948966:1.5707963267948966,variable=\t]({1.*3.*cos(\t r)+0.*3.*sin(\t r)},{0.*3.*cos(\t r)+1.*3.*sin(\t r)});
\draw [shift={(4.,0.)}] plot[domain=-1.5707963267948966:1.5707963267948966,variable=\t]({1.*1.*cos(\t r)+0.*1.*sin(\t r)},{0.*1.*cos(\t r)+1.*1.*sin(\t r)});
\draw (4.,-1.)-- (0.,-1.);
\draw (4.,1.)-- (0.,1.);
\draw (4.,3.)-- (0.,3.);
\draw (0.,1.)-- (0.,-1.);
\draw (4.,-3.)-- (-1.,-3.);
\draw (-2.,5.)-- (-2.,3.);
\draw (-2.,3.)-- (-4.,3.);
\draw (-4.,1.)-- (-2.,1.);
\draw (-2.,1.)-- (-2.,-2.);
\draw (0.,5.)-- (0.,3.);
\draw [shift={(-1.,5.)}] plot[domain=0.:3.141592653589793,variable=\t]({1.*1.*cos(\t r)+0.*1.*sin(\t r)},{0.*1.*cos(\t r)+1.*1.*sin(\t r)});
\draw [shift={(-4.,2.)}] plot[domain=1.5707963267948966:4.71238898038469,variable=\t]({1.*1.*cos(\t r)+0.*1.*sin(\t r)},{0.*1.*cos(\t r)+1.*1.*sin(\t r)});
\draw [shift={(-1.,-2.)}] plot[domain=3.141592653589793:4.71238898038469,variable=\t]({1.*1.*cos(\t r)+0.*1.*sin(\t r)},{0.*1.*cos(\t r)+1.*1.*sin(\t r)});

\draw [shift={(0.,-1.)},draw=none,fill=ffffff,fill opacity=1.0]  (0,0) --  plot[domain=3.141592653589793:4.71238898038469,variable=\t]({1.*1.*cos(\t r)+0.*1.*sin(\t r)},{0.*1.*cos(\t r)+1.*1.*sin(\t r)}) -- cycle ;

\draw [shift={(0.,3.)},draw=none,fill=ffffff,fill opacity=1.0]  (0,0) --  plot[domain=3.141592653589793:4.71238898038469,variable=\t]({1.*1.*cos(\t r)+0.*1.*sin(\t r)},{0.*1.*cos(\t r)+1.*1.*sin(\t r)}) -- cycle ;
\draw [shift={(0.,1.)},draw=none,fill=ffffff,fill opacity=1.0]  (0,0) --  plot[domain=3.141592653589793:4.71238898038469,variable=\t]({1.*1.*cos(\t r)+0.*1.*sin(\t r)},{0.*1.*cos(\t r)+-1.*1.*sin(\t r)}) -- cycle ;
\draw [shift={(-2.,1.)},draw=none,fill=ffffff,fill opacity=1.0]  (0,0) --  plot[domain=3.141592653589793:4.71238898038469,variable=\t]({-1.*1.*cos(\t r)+0.*1.*sin(\t r)},{0.*1.*cos(\t r)+-1.*1.*sin(\t r)}) -- cycle ;
\draw [shift={(-2.,3.)},draw=none,fill=ffffff,fill opacity=1.0]  (0,0) --  plot[domain=3.141592653589793:4.71238898038469,variable=\t]({-1.*1.*cos(\t r)+0.*1.*sin(\t r)},{0.*1.*cos(\t r)+1.*1.*sin(\t r)}) -- cycle ;
\draw [shift={(-2.,3.)},draw=none,fill=black,fill opacity=0.1]  (0,0) --  plot[domain=-1.5707963267948966:0.,variable=\t]({1.*2.*cos(\t r)+0.*2.*sin(\t r)},{0.*2.*cos(\t r)+1.*2.*sin(\t r)}) -- cycle ;

%%%sigma
\draw [color=black] (-4.,2.)-- (-2.,2.);
\draw [color=black] (-1.,3.)-- (-1.,5.);
\draw [shift={(-2.,3.)},color=black]  plot[domain=-1.5707963267948966:0.,variable=\t]({1.*1.*cos(\t r)+0.*1.*sin(\t r)},{0.*1.*cos(\t r)+1.*1.*sin(\t r)});
%\begin{scriptsize}
\draw (4.,1.8) node[anchor=north west] {$\gamma$};
\draw (-2,3.4) node[anchor=north west] {$\sigma$};
%\end{scriptsize}
\end{tikzpicture}
\caption{The dashed curve $\g$ and the curve $\sigma$ of Theorem~\ref{lem:eraser}. The light-grayed-out area is the region swept by rolling the ball along $\sigma$. The dark-grayed-out areas are the connected components of the interior of the inner retract $Y$ in the proof of the theorem.} \label{fig:eraser}
\end{figure}
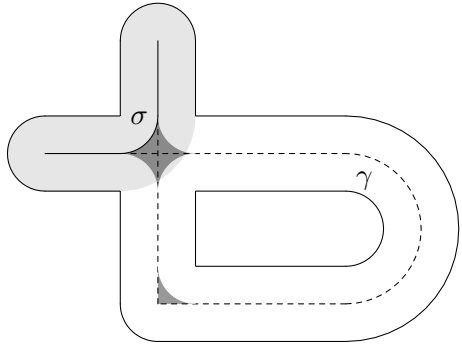

\begin{proof}[Proof of Theorem~\ref{lem:eraser}]
We may without loss of generality assume that $\g(0)\neq \g(1)$ 
and that $r=1$. We then split the proof in two steps.

\textit{Step one.} We make the additional assumption that $\g$ has finite length. Let $Y_{\g,1}$ denote the closed set of points $x\in U_{\g,1}$ such that $\dd(x,\pa U_{\g,1}) \ge 1$. Of course, $\g(t)\in Y_{\g,1}$ for all $t\in [0,1]$. Consider a continuous curve $\sigma:[0,1]\to \R^{2}$ that minimizes the length among curves with $\sigma(t)\in Y_{\g,1}$ for all $t\in [0,1]$ satisfying $\sigma(0)=\g(0)$ and $\sigma(1)=\g(1)$. As the set $Y_{\g,1}$ is compact, the existence of such a minimizer $\sigma$ is standard, and we may assume $|\sigma'(t)| = C$ for some constant $C>0$ and for almost all $t$. Observe that the inclusion $\sigma(t) \in Y_{\g,1}$ for all $t\in [0,1]$ implies the required property $U_{\sigma,1} \subset U_{\gamma,1}$. We are thus left to prove that $\sigma$ is $C^{1,1}$ with the modulus of its curvature bounded by $1$. From now on, we shall simply write $U = U_{\sigma,1}$ and $Y = \{x\in U:\ \dd(x,\pa U)\ge 1\}$.

Before going on, we introduce two definitions. We say that $\sigma$ has bilateral (tangent) balls of radius $1$ at $x=\sigma(t)$ if there exist $p,q\in \R^{2}$ such that 
\begin{equation}\label{eqn: tangent balls}
\{x\} = \pa B_{1}(p)\cap \pa B_{1}(q),\qquad B_{1}(p)\cap B_{1}(q) = \emptyset, 
\end{equation}
and $\sigma \cap \big(B_{1}(p)\cup B_{1}(q)\big) = \emptyset.$
 When no confusion arises, we will simply say that \textit{$\sigma$ satisfies the bilateral ball condition at $x$}. Then $\sigma$ will be said to satisfy a \textit{local} bilateral ball condition at $x=\sigma(t)$ if there exist $p,q\in \R^{2}$ such that \eqref{eqn: tangent balls} holds and 
 $\sigma(s) \notin B_{1}(p)\cup B_{1}(q)$ whenever $s$ is sufficiently close to $t$. To show that $\sigma$ is $C^{1,1}$ with curvature bounded by $1$, it suffices to show that $\sigma$ satisfies a local bilateral ball condition at $\sigma(t)$ for all $t\in [0,1]$.

Let $\intY$ denote the (possibly empty) set of interior points of $Y$. Given $t\in [0,1]$, we denote by $P(t)$ the (possibly empty) set of points in $\pa U$ with distance $1$ from $\sigma(t)$. We remark that $P(t)$ is empty if and only if $\sigma(t)\in \intY$. 

The set of points of $\sigma$ satisfying the bilateral ball condition is compact, since a uniformly bounded sequence of pairs of tangent balls with fixed radius $1$ converges, up to a subsequence, to a pair of tangent balls of radius $1$. Now, consider $t\in [0,1]$ such that $\sigma$ does not satisfy the bilateral ball condition at $x=\sigma(t)$. Then we have the following alternative: either $x\in \intY$, or there exist a unit vector $\nu$ and a constant $c>0$ such that $\big(z-x\big)\cdot \nu \ge c$ for all $z\in P(t)$. 

First suppose that $x\in \intY$. Since $\intY\subset Y_{1}$, by considering local variations of $\sigma$ in an open neighborhood of $x$, we find that $\sigma$ coincides with a segment in this neighborhood. Clearly, $\sigma$ satisfies a local bilateral ball condition at $x$ in this case.

Now consider the latter possibility. Up to a rotation, we may assume $\nu = -e_2$. Then, for $\e>0$ and $|w+e_2|$ small enough, $\dd(x -\e w,\pa U)>1$, so $x -\e w$ belongs to $\intY$. 
In other words, $\intY$ contains the open convex cone 
\[
K_{\theta,\e }(x) = \{z:\ (z-x)\cdot e_2 \ge |z-x|\,\cos\theta\}\cap B_\e(x)
\]
with vertex $x$ for some $0<\theta\le\pi/2$. We now claim that, for $r$ sufficiently small, the smaller cone $K_{\theta/2, \e/2}(y)$ is contained in $\intY$ for all $y \in \pa Y \cap B_r(x).$ Indeed, suppose that this is not the case. 
Then, for some sequence $\{y_i \} \subset \pa Y$ with $y_i \to x$, there exists $z_i \in  K_{\theta/2, \e/2}(y_i) \cap \pa Y$ for each $i$. 
Note that the cones $K_{\theta/2, \e/2}(y_i)$ converge to $K_{\theta/2, \e/2}(x)$ in the Hausdorff distance.
So, up to a subsequence, $z_i$ converges to a limit in $ K_{\theta/2, \e/2}(x) \cap \pa Y$; it follows that this limit is $x$. For each $i$, $Y$ has an exterior tangent ball $B_1(p_i)$ at $z_i$. That is, $z_i \in \pa B_1(p_i)$ and $B_1(p_i)\cap Y$ is empty. Furthermore, since $y_i \not \in B_1(p_i)$, it follows that
\begin{equation}\label{eqn:chunk}
|B_1(p_i) \cap K_{\theta, \e}(y)| \geq c
\end{equation}
for some $c>0$ depending on $\theta$ and $\e$. Up to a subsequence, $p_i \to p$ for some $p$ with $x \in \pa B_1(p)$ and $B_1(p)\cap Y = \emptyset$. On the other hand, passing to the limit in \eqref{eqn:chunk} shows that 
\[
|B_1(p) \cap K_{\theta,\e}(x)| \geq c.
\] 
This is a contradiction as $K_{\theta,\e}(x) $ is contained in $Y$. We have thus proven that the boundary of $Y$ is the graph of a Lipschitz function $\phi$ in a neighborhood $A$ of $x$, so that we can assume $Y$ locally coincides with the closed epigraph of $\phi$. Up to an isometry, we may assume that $x=0$, $A= (-\delta,\delta)\times (-\eta,\eta)$ for $\delta,\eta>0$ small enough, and $\phi:(-\delta,\delta)\to (-\eta,\eta)$. Let us set 
\[
t_1 = \inf\{t\in [0,1]:\ \sigma(t)\in A\}\qquad\text{and}\qquad 
t_2 = \sup\{t\in [0,1]:\ \sigma(t)\in A\}\,.
\]
Of course, $p_i = \sigma(t_i)\in \pa A$ must stay above the graph of $\phi$ for $i=1,2$. We consider the interval $(\alpha_1,\alpha_2)\subset (-\delta,\delta)$ defined by the first coordinates of $p_1$ and $p_2$, without loss of generality assuming that $\alpha_1<\alpha_2$. Notice that $\alpha_1<0<\alpha_2$, otherwise $\sigma$ would not be length minimizing. Furthermore, as $\sigma$ is length minimizing, we infer that $\sigma\cap \overline{A}$ coincides with the graph of the minimal concave function $f:[\alpha_1,\alpha_2]\to [-\eta,\eta]$ such that $p_i = (\alpha_i,f(\alpha_i))$ for $i=1,2$ and $f(u)\ge \phi(u)$ for $u\in (\alpha_1,\alpha_2)$. Finally, we have two possibilities. If $f(0)>\phi(0)$, then $f$ is affine in a neighborhood of $0$. Or, if $f(0)=\phi(0)$, the graph of $f$ satisfies a local bilateral ball condition at $x=(0,f(0))$; indeed, by concavity, the graph of $f$ admits a tangent ball from above at $x$ for any positive radius, while the existence of a tangent ball from below at $x$ with radius $1$ is guaranteed by the fact that $x$ is a boundary point of $Y$ in this case. In conclusion, we have proved that $\sigma$ satisfies a local bilateral ball condition at any point $x=\sigma(t)$ for $0<t<1$. 
\\

\textit{Step two.} We now consider the general case when $\g$ is only continuous. Let $0<\e<r$ be arbitrarily fixed. Since $\g([0,1])$ is compact, we can find a finite partition $0=t_0<t_1<\dots <t_N=1$ of $[0,1]$ with the following property: the piecewise linear curve $\g_\e$, that coincides with $\g$ on $t_i$ for all $i=0,\dots,N$, is contained in the $\e$-neighborhood of $\g$. Consequently, the $(r-\e)$-neighborhood of $\g_\e$ is contained in the $r$-neighborhood of $\g$. Now, since $\g_\e$ has finite length, we apply Step one with $\g_\e$ and $r-\e$ in place of $\g$ and $r$ respectively to obtain a curve $\sigma_\e$ of class $C^{1,1}$ and with curvature bounded by $1/(r-\e)$. Of course, we may assume that $\sigma_{\e}$ is a constant-speed parametrization. 
We are left to prove that the (constant) modulus $v_\e$ of the velocity of $\sigma_\e$ is uniformly bounded in $\e$, so that by Ascoli-Arzel\'a Theorem we obtain a sequence $\{\sigma_{\e_j}\}_j$ converging in $C^{1,1}$ to a parametric curve $\sigma$ with the required properties. In order to show a uniform upper bound on $v_\e$, we argue by contradiction. 
Suppose that, for a sequence $\{\sigma_j =\sigma_{\e_j} \}_j$, the speed $v_{\e_j} = v_j\to +\infty$ as $j\to\infty$. Necessarily, the length of the curve coincides with $v_j$, thus by the boundedness of $\Om$ we can find sequences $\{t_j\}_j$ and $\{s_j\}_j$ in $[0,1]$, such that $|\sigma_{j}(t_j)-\sigma_{j}(s_j)|\to 0$ as $j\to \infty$, but $v_j|t_j-s_j| = 1$ for all $j$. 
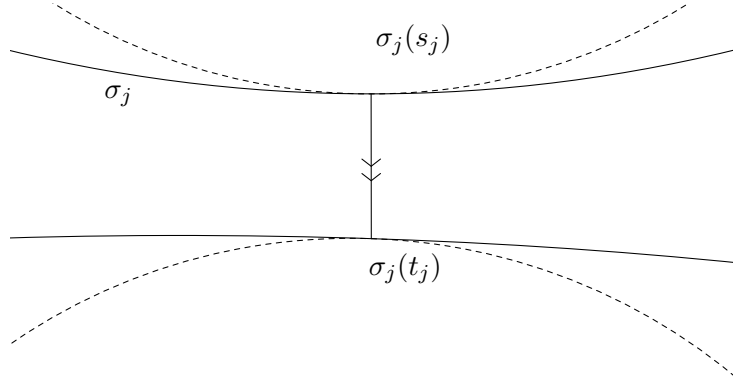
\begin{figure}[t]
\begin{tikzpicture}[line cap=round,line join=round,>=triangle 45,x=4.0cm,y=4.0cm]
\clip(-1.2,1.5) rectangle (1.2,2.8);
\draw [samples=500,rotate around={0.:(0.,2.5)},xshift=0.cm,yshift=10.cm,domain=-20.0:20.0)] plot (\x,{(\x)^2/2/5.0});
\draw [rotate around={0.11345684083991438:(-0.6715922512756595,-0.5730376900197732)}] (-0.6715922512756595,-0.5730376900197732) ellipse (28.262457519307414cm and 10.408642803123875cm);
\draw (0.,2.5)-- (6.186479577519055E-4,2.0184746812432);
\draw (3.7108529729996145E-4,2.211165350456242) -- (-0.030563217611070972,2.23516164183422);
\draw (3.7108529729996145E-4,2.211165350456242) -- (0.031243626887247026,2.2352410492436223);
\draw (3.0932397887609267E-4,2.2592373406216) -- (-0.03062497892949484,2.2832336319995776);
\draw (3.0932397887609267E-4,2.2592373406216) -- (0.031181865568823155,2.2833130394089802);
\draw [dash pattern=on 2pt off 2pt] (0.,4.5) circle (8.cm);
\draw [dash pattern=on 2pt off 2pt] (-0.06631635210308308,0.01959506861845356) circle (8.cm);

%\begin{scriptsize}
\draw (-0.01814645535663145,2.764009422573987) node[anchor=north west] {$\sigma_j(s_j)$};
\draw (-0.04103789813748931,2.0040135222495135) node[anchor=north west] {$\sigma_j(t_j)$};
\draw (-0.920069300922431,2.56256472610244) node[anchor=north west] {$\sigma_j$};
%\end{scriptsize}

\end{tikzpicture}
\caption{\label{fig:nonoverlapping} A situation occurring in the proof of Theorem~\ref{lem:eraser}}
\end{figure}
Since the restriction of $\sigma_{j}$ to $[s_j,t_j]$ has length $1$ and curvature bounded by $\kappa$, there exists $\tau_j\in (s_j,t_j)$ such that the minimum of the distances of $\sigma_j(\tau_j)$ from $\sigma_{j}(s_j)$ and $\sigma_{j}(t_j)$ is larger than some fixed constant $c>0$ independent of $j$. By the construction of Step one, any point $x\in\sigma_{j}([0,1])$ admits a ball of radius $r_j=r-\e_j$ touching $\sigma$ only at $x$. Therefore, if $j$ is large enough, the segment connecting $\sigma_j(s_j)$ and $\sigma_{j}(t_j)$ must be contained in the inner parallel set $Y_j$ to the $r_j$-neighborhood of $\sigma_j$ (see the definition given in Step one). This is due to the fact that the touching balls at the two points $\sigma_j(s_j)$ and $\sigma_j(t_j)$ do not overlap when $j$ is large (see Figure~\ref{fig:nonoverlapping}). 
At the same time, $Y_j$ is contained in the inner parallel set to the $r_j$-neighborhood of $\g_{\e_j}$. Since the length of such a segment is infinitesimal as $j\to\infty$, we may ``follow the shortcut'' (the arrow line in Figure~\ref{fig:nonoverlapping}) and get a contradiction with the length-minimality of $\sigma_{j}$. This completes the proof of the lemma.
\end{proof}

\section{Proof of Theorem~\ref{theorem:Cheeger formula}}\label{sec: proof of main}
In this section, we prove Theorem~\ref{theorem:Cheeger formula}. We start by proving the following lemma that will be used in the proof.

\begin{lemma}\label{lemma:reach}
Suppose that $\Om \subset \R^2$ is  simply connected and has no necks of radius $r$. Then $\RR(\Om^r) \geq r.$ 
\end{lemma}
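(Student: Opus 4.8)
The plan is to verify the reach bound directly from its definition: it suffices to show that every point $x$ with $\dd(x,\Om^{r})<r$ has a \emph{unique} nearest point in $\Om^{r}$. Writing $d(\cdot)=\dd(\cdot,\pa\Om)$, recall $\Om^{r}=\{d\ge r\}$ and record the elementary observation that $B_{r}(y)\cap\Om^{r}=\emptyset$ for every $y\in\pa\Om$ (indeed any $z\in B_{r}(y)$ satisfies $d(z)\le|z-y|<r$). In particular, if $p\in\pa\Om^{r}$ then $d(p)=r$, and any nearest boundary point $y$ of $p$ satisfies $|p-y|=r$, so that $B_{r}(y)$ is an exterior ball of radius $r$ touching $\Om^{r}$ at $p$. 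This already rules out the ``local'' curvature-type failure of the bound and shows that a failure of $\RR(\Om^{r})\ge r$ can only arise from a point having two distinct nearest points.

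First I would set up the contradiction. Assume $\RR(\Om^{r})<r$; then there are $x\in\R^{2}$ and $p_{1}\ne p_{2}\in\Om^{r}$ with $s:=\dd(x,\Om^{r})=|x-p_{1}|=|x-p_{2}|<r$, and the open disk $D:=B_{s}(x)$ is disjoint from $\Om^{r}$. Since $x\notin\Om^{r}$ and each $p_{i}$ realizes the distance, each $p_{i}$ lies on $\pa\Om^{r}$, hence $d(p_{i})=r$ and there is a nearest boundary point $y_{i}\in\pa\Om$ with $|p_{i}-y_{i}|=r$. Using that $x-p_{i}$ is an outward proximal normal to $\Om^{r}$ at $p_{i}$, and that (where $\pa\Om^{r}$ is smooth) this normal points toward the nearest boundary point, one obtains that $x,p_{i},y_{i}$ are collinear with $x\in(p_{i},y_{i})$; consequently $|x-y_{i}|=r-s$ and $d(x)=r-s$. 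If $y_{1}=y_{2}$, then $p_{1}$ and $p_{2}$ would both lie on the ray from $y_{1}$ through $x$ at distance $r$ from $y_{1}$, forcing $p_{1}=p_{2}$; hence $y_{1}\ne y_{2}$, and $x$ has two distinct nearest boundary points at distance $r-s<r$.

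Next I would bring in the hypotheses. By \eqref{NB} and Remark~\ref{rmk:nobott}, $\Om^{r}$ is path-connected, so there is a continuous path $\g\subset\Om^{r}$ joining $p_{1}$ to $p_{2}$; since $\g\subset\Om^{r}$ and $D\cap\Om^{r}=\emptyset$, this path avoids the open disk $D$. The segments $[p_{i},y_{i}]$ lie in $\overline{\Om}$ (they are radii of the balls $B_{r}(p_{i})\subset\Om$), cross at $x$, and their continuations $[x,y_{i}]$ reach $\pa\Om$ at $y_{i}$. The goal is then to assemble, from $\g$ and these segments, a Jordan curve contained in $\Om$ whose interior must contain one of the $y_{i}$ (or a point of $\R^{2}\setminus\overline{\Om}$): this contradicts the simple connectedness of $\Om$, because the interior of any Jordan curve lying in $\Om$ is contained in $\Om$, whereas $y_{i}\in\pa\Om\not\subset\Om$. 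Concretely, simple connectedness makes $\Om^{r}$ simply connected and $\R^{2}\setminus\Om^{r}$ connected, which forces the two arcs of $\pa D$ cut off by $p_{1},p_{2}$ to close up with $\g$ into loops; tracking the winding number about $x$ then locates the enclosed boundary point.

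The main obstacle is exactly this last topological step: converting the no-neck path $\g$, the crossing radii, and the Jordan curve theorem into an explicit enclosure contradiction, with the orientation and winding bookkeeping carried out correctly and the degenerate cases treated separately — namely when $p_{i}$ is a corner of $\pa\Om^{r}$ (so the proximal normal direction is not single-valued) and when $D$ meets $\pa\Om$, i.e. $s>r/2$, so the points $y_{i}$ lie inside $D$. The collinearity and the extraction of two distinct nearest boundary points form the robust part of the argument, while the extraction of the contradiction from planar topology is the delicate part, and it is precisely here that the simple connectedness in \eqref{T} is used in an essential way.
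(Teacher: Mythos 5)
Your plan has the right raw ingredients (reduce to a point $x$ with two distinct projections $p_1\neq p_2$ onto $\Om^r$, then play the no\nobreakdash-neck hypothesis against planar topology), but two of its central steps are not proved, and you flag both of them yourself without resolving either. The first is the collinearity claim: you deduce that $x,p_i,y_i$ are collinear (hence $d(x)=r-s$ and $y_1\neq y_2$) from the assertion that the outward proximal normal to $\Om^r$ at $p_i$ points toward the nearest boundary point \emph{where $\pa\Om^r$ is smooth}. But $\pa\Om^r$ is in general not smooth at $p_i$: inner parallel sets typically have corners, and at a corner the proximal normal directions form a genuine cone. Both $B_s(x)$ and $B_r(y_i)$ are exterior balls touching $\Om^r$ at $p_i$, yet nothing forces their centers to be aligned with $p_i$. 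You cannot appeal to any unproven regularity of the configuration, since the configuration is precisely the one the lemma shows cannot exist; so the collinearity, the identity $d(x)=r-s$, and the conclusion $y_1\neq y_2$ are all unjustified, and the entire ``crossing radii'' picture downstream rests on them.

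The second gap is the one you call ``the main obstacle,'' and it is indeed the crux: the contradiction is never produced. Even granting collinearity, the natural Jordan curve is $\sigma=\g\cup[p_1,x]\cup[x,p_2]$ with $\g\subset\Om^r$ the no\nobreakdash-neck path, and near $x$ the rays toward $y_1$ and $y_2$ are \emph{opposite} to the rays toward $p_1$ and $p_2$, so both enter the same complementary sector of angle $2\pi-\theta$, where $\theta=\angle p_1xp_2$. Whether that sector lies inside or outside $\sigma$ depends on which way $\g$ closes up; in the unfavorable case (interior of $\sigma$ equal to the $\theta$-sector) neither $y_i$ is enclosed and no contradiction follows, and ruling this case out requires a genuine additional idea, not bookkeeping. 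It is instructive to compare with the paper, which runs the topology in the opposite direction and thereby avoids both issues: it takes the line $\ell$ through $p_1,p_2$, the circle $\pa B_r(z)$ through $p_1,p_2$ whose center $z$ lies on the same side $H^+$ as a nearest boundary point $y_0$ of $x$, and a point $\bar x\in\Om$ on the arc $\pa B_r(z)\cap H^-$; the elementary estimate $\dd\bigl(\bar x,\,H^+\setminus(B_r(p_1)\cup B_r(p_2))\bigr)=r$, combined with $\dd(\bar x,\pa\Om)<r$, shows that every projection $\bar y$ of $\bar x$ onto $\pa\Om$ lies in $H^-$. The piecewise linear path $[\bar y,\bar x]\cup[\bar x,x]\cup[x,y_0]$ then lies in $\Om$ except for its endpoints, avoids $\Om^r$, and crosses $[p_1,p_2]$, so it disconnects $\Om^r$ into two components separating $p_1$ from $p_2$, contradicting Remark~\ref{rmk:nobott} directly --- no normal directions to $\Om^r$, no nearest boundary points of the $p_i$, and no winding-number analysis are ever needed. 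That explicit construction is exactly the content your sketch defers, so as it stands the proposal is a plausible plan rather than a proof.
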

\begin{proof}
Recall that $\Om$ having no necks of size $r$ is equivalent to $\Om^r$ being path-connected; see Remark \ref{rmk:nobott}. 
If $\RR(\Om^r) <r$, then there exists $x_0 \in \Om \setminus \Om_r$ 
such that
 $0<\dd(x_0,\Om^r)=t<r$ and $x_0$ has a non-unique projection onto $\Om^r$. Take $y_1,y_2$ to be two distinct points in $\pa B_t(x_0)\cap \pa \Om^r$. 
Let $\ell$ be the line passing through $y_1$ and $y_2$, splitting $\R^2$ into two open half-planes $H^+$ and $H^-$, and let $\pa^\pm \Om= \pa \Om \cap H^\pm$. Since $x_0 \not \in \Om^r$, there exists $y_0 \in \pa \Om$ such that $x_{0} \in B_r(y_0)$. Then, by simple geometric considerations, one finds that $y_0$ cannot be contained in $\ell$, so without loss of generality we can assume $y_0 \in \pa^+\Om$. 

We construct a path from $y_0$ to a point $\bar y \in\pa^- \Om$ that disconnects $\Om^r$. Let $z$ be  the unique point in $H^+$ such that $y_1, y_2 \in \pa B_r(z)$. As $t<r$, the circular arc $\sigma = \pa B_r(z) \cap H^-$ is contained in $B_t(x_0)$. Moreover, $\sigma\cap \Om\neq \emptyset$ and $\sigma\cap \Om^{r} = \emptyset$, otherwise, $\dd(x_0,\Om^r)<t$. 
Let us fix $\bar x \in \sigma\cap \Om$ and show that any projection of $\bar x$ onto $\pa \Om$ lies in $\pa \Om^-$. To this aim, we consider the set $A^{+} = H^{+}\setminus \big(B_{r}(y_{1})\cup B_{r}(y_{2})\big)$. Notice that $\dd(\bar x,A^{+}) = |\bar x -z| = r$, so $\dd(\bar x,\pa^{+}\Om) \ge r$; see Figure \ref{fig:barx}. 
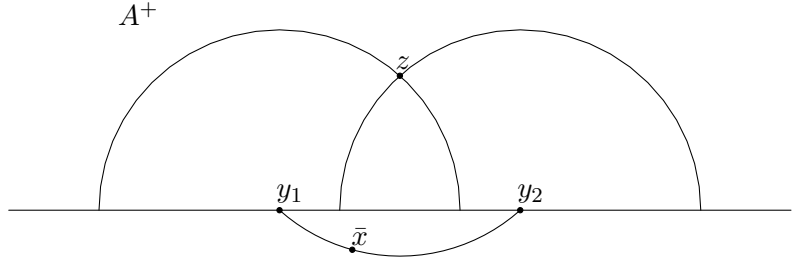
\begin{figure}[t]
\centering
\begin{tikzpicture}[line cap=round,line join=round,>=triangle 45,x=0.8cm,y=0.8cm]
\clip(-6.5,-2.) rectangle (6.5,3.9);
\draw [shift={(2.,0.)}] plot[domain=0.:3.141592653589793,variable=\t]({1.*3.*cos(\t r)+0.*3.*sin(\t r)},{0.*3.*cos(\t r)+1.*3.*sin(\t r)});
\draw [shift={(-2.,0.)}] plot[domain=0.:3.141592653589793,variable=\t]({1.*3.*cos(\t r)+0.*3.*sin(\t r)},{0.*3.*cos(\t r)+1.*3.*sin(\t r)});
\draw [shift={(0.,2.23606797749979)}] plot[domain=3.982661324157723:5.442116636611656,variable=\t]({1.*3.*cos(\t r)+0.*3.*sin(\t r)},{0.*3.*cos(\t r)+1.*3.*sin(\t r)});
\draw [domain=-6.5:6.5] plot(\x,{(--4.-0.*\x)/1.});
\draw (-4.849255632386212,3.6286529456009684) node[anchor=north west] {$A^+$};
\draw [domain=-6.5:6.5] plot(\x,{(-0.-0.*\x)/1.});
%\begin{scriptsize}
\draw [fill=black] (0.,2.23606797749979) circle (1.0pt);
\draw[color=black] (0.05,2.4487846615885984) node {$z$};
\draw [fill=black] (2.,0.) circle (1.0pt);
\draw[color=black] (2.1746477458749385,0.2734025129407902) node {$y_2$};
\draw [fill=black] (-2.,0.) circle (1.0pt);
\draw[color=black] (-1.8258431546045095,0.2734025129407902) node {$y_1$};
\draw [fill=black] (-0.7909905023056598,-0.6577761830006059) circle (1.0pt);
\draw[color=black] (-0.6644103125298311,-0.4455797226292481) node {$\bar{x}$};
%\end{scriptsize}
\end{tikzpicture}
\caption{A geometric configuration in the proof of Lemma~\ref{lemma:reach} \label{fig:barx}}
\end{figure}
On the other hand, $\dd(\bar x,\pa \Om)<r$. 
%and therefore $\dd(\bar x,\pa \Om) = \dd(\bar x,\pa^{-} \Om)$, so that there exists 
Hence, $|\bar x-\bar y| = \dd(\bar x,\pa \Om)$ for some $\bar y\in \pa^{-} \Om$. We now consider the piecewise linear path 
\[
\G = [\bar y, \bar x] \cup [\bar x, x_0] \cup [x_0, y_0]\,.
\]
By construction, $\G \setminus\{y_{0},\bar y\}\subset \Om$ and $\G\cap\Om^{r} = \emptyset$, thus $\G$ disconnects $\Om^{r}$ into two nonempty components, one containing $y_1$ and the other containing $y_2$ (notice indeed that the segment $[\bar x,x_{0}]$ necessarily cuts $[y_{1},y_{2}]$ into two nontrivial subsegments). We reach a contradiction, completing the proof.
\end{proof}

We are now ready to prove our main theorem.

\begin{proof}[Proof of Theorem~\ref{theorem:Cheeger formula}]
Up to rescaling, we may assume that $h(\Om) =1$. Let $E$ be the maximal Cheeger set of $\Om$. By Lemma~\ref{lemma:curvature bound}, $E$ has curvature bounded from above by one, so by Theorem~\ref{theorem:Pestov and Ionin}, $E$ contains a ball of radius one. By Lemma \ref{lem:rollingball} and the assumption that $\Om$ has no necks of radius one, we have
\[
\underset{B_1 \subset \Om}{\bigcup} B_1 \subset E.
\]
We now aim to show the opposite inclusion. 
Let 
\[
G = E \setminus \underset{B_1 \subset \Om}{\bigcup} B_1.
\]
If $G$ is nonempty, choose $x\in G$ and note that $\dd(x,\pa E) <1$. As before, let $P_x$ denote the set of projections of $x$ onto $\pa E$. Take $y \in P_x$ and define $z_t = y + t(x-y)$. For some $t\ge 1$, $z_t$ is a cut point with $y\in P_{z_t}$. Furthermore, $\dd(z_t, \pa E)<1$, otherwise $x$ would belong to the union of balls of radius $1$ contained in $\Om$. Hence, by Lemma~\ref{prop: focal point distance}, $z_t$ is not a focal point of $E$, so $\#P_{z_t} >1$ by Lemma~\ref{prop: at least one}.
Let $\g$ be a maximal path (with respect to inclusion) in 
%\replaceGP{$\mathcal{C}\cap G$}
{$\mathcal{C}\cap \{a\in E:\dd(a,\pa E)<1\}$} containing $z_t$; such a path exists by Zorn's Lemma and is defined, say, on a bounded open interval $(a,b)$. By Proposition~\ref{lem: structure}, $\g$ is not reduced to the single point $z_t$. We now split the proof in two cases.

\textit{Case one: the endpoints $\g(a)$ and $\g(b)$ of the curve $\g$ are well-defined in the limit sense.} Arguing as in the proof of Theorem~\ref{theorem:Pestov and Ionin}, we determine that the closure of $\g$ is not a loop. Moreover, $\dd(\g(a),\pa E)\ge 1$ and $\dd(\g(b),\pa E)\ge 1$. Since $\Om$ has no necks of radius one, there exists a path $\tilde{\g}$ with endpoints $\g(a)$ and $\g(b)$ such that $B_1(z) \subset \Omega$ for all $z \in \tilde{\g}$. Lemma \ref{lem:rollingball} ensures that these balls are contained in $E$ as well. Now, consider the closed loop $\sigma$ obtained by concatenation of the two paths $\g$ and $\tilde \g$. Notice that $\sigma$ is a simple loop as $\g$ and $\tilde\g$ do not intersect except that at the endpoints. Since $E$ is simply connected, the domain $D_{\sigma}$ bounded by $\sigma$ is compactly contained in $E$. 
Furthermore, since $\g$ is piecewise $C^{1}$ and of positive length (Proposition~\ref{lem: structure}), almost all points $z\in \g$ have $\#P_{z} = 2$. Let us fix one such point $x\in \g$, so $\#P_{x}=2$ and $\dd(x,\pa E)<1$. The two segments $[y_i, x]$ for $y_i \in P_x$, $i=1,2$, are transversal to the tangent to $\g$ at $x$, and lie on opposite sides of the tangent line to $\g$ at $x$. 
Hence, one of the segments has nonempty intersection with the interior of $D_\sigma$. Suppose it is the first one, $[y_1,x]$. Since $y_1 \in \pa E$, the segment $[y_1,x]$ must intersect $\sigma$ at some $x'\neq x$ with $\#P_{x'} =1$. However, $x' \not \in \tilde{\g}$, as this would imply that $\dd(x,\pa E) \geq 1$. while $x' \in \g$, as this implies $\#P_{x'} \geq2.$ We reach a contradiction, concluding the proof of \eqref{eqn: union} in this case.

\textit{Case two: the endpoints of $\g$ are not well-defined.} 
In this case, we replace $\g$ with another curve $\hat\g$ obtained in the following way. First, we restrict $\g$ to a sequence of compact subintervals $[\alpha_{j},\beta_{j}]\subset (a,b)$ such that $\alpha_{j}\to a$, $\beta_{j}\to b$, $\g(\alpha_{j})\to z_{a}$ and $\g(\beta_{j})\to z_{b}$ as $j\to\infty$ for some $z_a, z_b\in \Om$. Therefore by the maximality of $\g$ and owing to Propositon~\ref{lem: structure}, we obtain that 
%\replaceGP{$\dd(z_{a},\pa E)\ge 1$ and $\dd(z_{b},\pa E)\ge 1$}
{$\dd(z_{a},\pa E)=\dd(z_{b},\pa E)= 1$}. Consequently, we may assume the existence of $j\in \N$ and $t\in (\alpha_{j},\beta_{j})$ such that, setting $x=\g(t)$, we have $\# P_{x} = 2$ and 
\[
\dd(x,\pa E) < \min \{\dd(\g(\alpha_{j}),\pa E),\ \dd(\g(\beta_{j}),\pa E)\}\,.
\]
Third, by connecting $\g(\alpha_{j})$ and $\g(\beta_{j})$ to, respectively, $z_{a}$ and $z_{b}$ with two straight segments, we would obtain $\hat\g$ as a replacement of $\g$, having $z_{a}$ and $z_{b}$ as endpoints. Up to choosing $j$ large enough, we can also assume that $\dd(x,\pa E) < \dd(y,\pa E)$ for every $y$ belonging to each straight segment. Then we can repeat the same proof as in Case one, with $\hat\g$ in place of $\g$.

Now we show that \eqref{eqn: r} holds true as well. Thanks to \eqref{NB} and Remark \ref{rmk:nobott}, $\Om^{r}$ is connected. Then Lemma~\ref{lemma:reach} implies that $\RR(\Om^r ) \geq r$. We can apply the Steiner formulas \eqref{eqn: Steiner2} to $E = \Om^r \oplus B_{r}$. Since 
\[
\frac{1}{r} = h(\Om)= \frac{P(E)}{|E|},
\]
 we deduce that
\[
 \M_0(\Om^r)r + 2\pi r^2 = r P(E) = |E|  = |\Om^r|+ \M_0(\Om^r)r + \pi r^2.
\]
That is,
\[
|\Om^r| = \pi r^2.
\]
\end{proof}

We now prove Corollary~\ref{cor: cor2}. 

\begin{proof}[Proof of Corollary~\ref{cor: cor2}]
We obtain the following simple bounds above and below on the Cheeger constant of $\Omega$:
\[
\frac{n\om_n^{1/n}}{|\Om|^{1/n}}\leq h(\Om) \leq \frac{2}{\inr(\Om)}.
\]
Indeed, the bound below comes from applying the isoperimetric inequality to any $E\subset \Om$ and using $|E| \leq |\Om|$, while the upper bound follows simply by taking the competitor $E=$ a ball contained in $\Om$ with the largest possible radius in the minimization problem. Hence, the assumption \eqref{NB'} on $\Om$ in particular implies that $\Om$ has no necks of radius $r =1/h(\Om)$. Applying Theorem~\ref{theorem:Cheeger formula}, we conclude the proof.
\end{proof}

\begin{remark}\label{rem:uniqueness}\rm 
Notice that whenever $\overline{\text{\rm int}(\Omega^r)} = \Omega^r$, then the maximal Cheeger set of $\Om$ is also minimal, i.e. the Cheeger set is unique. To see this one can argue by contradiction. Suppose that $E_{{\rm max}} \setminus E_{\rm min} \neq \emptyset$ and take a point $x$ in the difference. Then, by  \eqref{eqn: union} there exists a ball $B_r \subset E_{{\rm max}}$ containing $x$, hence by $\overline{\text{\rm int}(\Omega^r)} = \Omega^r$ we can find $y\in {\rm int}(\Omega^r)$ such that $x\in B_r(y)$. By Theorem \ref{theorem:Pestov and Ionin}, we can find a ball $\widetilde{B}_r(z)\subset E_{\rm min}$, then by the no neck assumption there exists a continuous path $\g$ connecting $z$ with $y$, such that $B_r(\g(t)) \subset E_{\rm max}$. Owing to Theorem~\ref{lem:eraser} we can directly assume $\g$ of class $C^{1,1}$ and with curvature bounded by $h(\Om) = 1/r$. Let $\bar{t}<1$ be the last time such that $B_r(\gamma(t))\subset E_{\rm min}$. Of course we have $\g(\bar{t}) \neq \g(1)=y$. 
Then Lemma \ref{lem:rollingball} ensures that $U_\tau = E_{\rm min} \cup \bigcup_{t\in [\bar t,\tau]}$ is a Cheeger set of $\Om$ for all $\tau\ge \bar t$. This, however, leads to a contradiction because $\pa U_1 \cap \pa B_r(y)$ is an arc of radius $r$ whose endpoints do not belong to $\pa \Om$. 
\end{remark}

\section{Computation of the Cheeger constant of Koch's snowflake}\label{sec:Koch}

As an application of Theorem \ref{theorem:Cheeger formula}, we find an efficient procedure for the approximation of the Cheeger constant of a Koch's snowflake $K$. Before describing the procedure, in the next lemma we prove a general error estimate involving the inverse of the Cheeger constants of two domains $\om\subset \Om$ for which the formula \eqref{eqn: r} holds. This lemma will be crucial for estimating $h(K)$ with increasing precision.
\begin{lemma}\label{lem:approx}
	Let $\om,\Om$ be two planar domains such that $\om\subset \Om$. Let us set $r = h(\om)^{-1}$ and $R = h(\Om)^{-1}$ and assume that \eqref{eqn: r} holds for both, that is, $|\om^r| = \pi r^2$ and $|\Om^R| = \pi R^2$. Then 
	\begin{equation}\label{eq:errorestimate}
	0\le R - r \le \frac{|\Om^{r}\setminus \om^{r}|}{2\pi r}\,.
	\end{equation}
\end{lemma}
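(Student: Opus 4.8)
The plan is to derive the upper bound in \eqref{eq:errorestimate} entirely from the two inner Cheeger identities $|\om^r|=\pi r^2$ and $|\Om^R|=\pi R^2$, combined with a couple of elementary inclusions among inner parallel sets. First I would dispose of the lower bound $R-r\ge 0$: since $\om\subseteq\Om$, the Cheeger constant is monotone nonincreasing under inclusion, so $h(\Om)\le h(\om)$ and therefore $R=h(\Om)^{-1}\ge h(\om)^{-1}=r$.

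Next I would record two inclusions. For every $x\in\om$ one has $\dd(x,\pa\Om)\ge\dd(x,\pa\om)$, because $\R^2\setminus\Om\subseteq\R^2\setminus\om$ and the distance to the smaller closed set is the larger; consequently $x\in\om^r$ forces $x\in\Om^r$, i.e. $\om^r\subseteq\Om^r$. Moreover, since $r\le R$, the inner parallel sets of $\Om$ are nested, giving $\Om^R\subseteq\Om^r$. Both inclusions say that the area differences
\[
|\Om^r\setminus\om^r|=|\Om^r|-|\om^r|,\qquad |\Om^r\setminus\Om^R|=|\Om^r|-|\Om^R|
\]
are well defined and nonnegative.

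The key step is then a short rewriting. Subtracting the two identities and inserting $\pm|\Om^r|$ yields
\[
\pi(R^2-r^2)=|\Om^R|-|\om^r|=\bigl(|\Om^r|-|\om^r|\bigr)-\bigl(|\Om^r|-|\Om^R|\bigr)=|\Om^r\setminus\om^r|-|\Om^r\setminus\Om^R|.
\]
Since $|\Om^r\setminus\Om^R|\ge 0$, this gives $|\Om^r\setminus\om^r|\ge\pi(R^2-r^2)=\pi(R-r)(R+r)$, and using $R+r\ge 2r$ (which follows from $R\ge r$) we obtain $|\Om^r\setminus\om^r|\ge 2\pi r(R-r)$. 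Dividing by $2\pi r$ is exactly the claimed upper bound.

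The only point requiring any care is the distance comparison underlying $\om^r\subseteq\Om^r$; everything after that is bookkeeping with areas. I do not expect a genuine obstacle, since once the decomposition $\pi(R^2-r^2)=|\Om^r\setminus\om^r|-|\Om^r\setminus\Om^R|$ is spotted, simply discarding the nonnegative term $|\Om^r\setminus\Om^R|$ delivers the estimate. It is worth noting that this argument in fact proves the sharper identity $|\Om^r\setminus\om^r|=\pi(R^2-r^2)+|\Om^r\setminus\Om^R|$, from which \eqref{eq:errorestimate} is the immediate weakening obtained by bounding $R+r$ below by $2r$.
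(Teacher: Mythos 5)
Your proposal is correct and takes essentially the same approach as the paper: both proofs rest on monotonicity of the Cheeger constant (for $0\le R-r$) together with the chain $\pi R^2=|\Om^{R}|\le|\Om^{r}|=|\om^{r}|+|\Om^{r}\setminus \om^{r}|=\pi r^2+|\Om^{r}\setminus \om^{r}|$, which uses exactly the inclusions $\Om^R\subseteq\Om^r$ and $\om^r\subseteq\Om^r$ that you record. The only difference is cosmetic, in the last algebraic step: you factor $R^2-r^2=(R-r)(R+r)\ge 2r(R-r)$, whereas the paper takes square roots and applies $\sqrt{1+t}\le 1+\tfrac{t}{2}$ --- two equivalent ways of exploiting $R+r\ge 2r$.
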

\begin{proof}
The first inequality in \eqref{eq:errorestimate} directly follows from the monotonicity of the Cheeger constant with respect to inclusion, that is, $h(\Om) \le h(\om)$. In order to prove the second inequality we observe that
\[
\pi R^2 = |\Om^{R}| \le |\Om^{r}| = |\om^{r}| + |\Om^{r}\setminus \om^{r}|\,,
\]
whence
\[
R \le \sqrt{|\Om^{r}| / \pi} = r \sqrt{|\Om^{r}|/|\om^{r}|} = r\sqrt{1+ \frac{|\Om^{r} \setminus \om^{r}|}{\pi r^2}}\,.
\]
Using the elementary inequality $\sqrt{1+t} \le 1+\frac t2$ we get
\[
R \le r\left(1 + \frac{|\Om^{r} \setminus \om^{r}|}{2\pi r^2}\right)
\]
and \eqref{eq:errorestimate} follows at once.
\end{proof}

%%%%%%%%%%%%%%%%%%%%
It is convenient to start the construction of Koch's snowflake $K$ from an equilateral triangle $K_1$ with side length equal to $3$. We let $K_n$ be the $n$-th step in the construction of $K$, and recall that $K_{n+1}$ is the polygon obtained by attaching $3\cdot 4^{n-1}$ equilateral triangles of side length $3^{1-n}$ to the middle thirds of the sides of $K_n$. 

First, it is obvious that $K_n$ satisfies \eqref{T}, while it is not difficult to prove that  \eqref{NB} holds for all $n\in \N$. The idea is the following: given any pair of balls of equal radius that are contained in $K_n$, by the symmetry of $K_n$ we can move each ball, without exiting $K_n$, in such a way that its center is translated onto an axis of symmetry of a smallest triangle (one of those used in the construction of $K_n$) that has a non-empty intersection with the ball. Then we can similarly move the center (and the ball) along the axis until it reaches another axis of symmetry of a triangle of bigger size. By iteratively repeating this procedure, we finally move each ball inside $K_n$ until its center coincides with the center of symmetry of the initial triangle $K_1$. This shows that, for all $n\in \N$, $K_n$ satisfies the no-neck property (see Definition \ref{def:noneck}) for any radius $r>0$ smaller than the inradius of $K_n$.

Since the snowflake $K$ is the union of all $K_n$, it necessarily satisfies \eqref{T} and \eqref{NB}. Then, thanks to Theorem \ref{theorem:Cheeger formula}, the formula \eqref{eqn: r} holds with $K$ (respectively, $K_n$) in place of $\Om$ and $r=h(K)^{-1}$ (respectively, $r=r_n=h(K_n)^{-1}$). We first note that, by Lemma \ref{lem:approx}, we have 
\begin{equation*}
0\le r - r_n \le \frac{|K^{r_n}\setminus K_n^{r_n}|}{2\pi r_n}\,.
\end{equation*} 
By \eqref{eqn: r}, one can easily compute the Cheeger constant of the equilateral triangle $K_1$ to be 
\[
h(K_1) = \frac{6 + 2\sqrt{\pi\sqrt{3}}}{3\sqrt{3}}\,.
\]
Let us now compute $h(K_2)$. First, we note that $h(K_2)\le 2$ as the inradius of $K_2$ is $1$. In the following, we let $r = r_2 = h(K_2)^{-1}$ and distinguish between two cases.

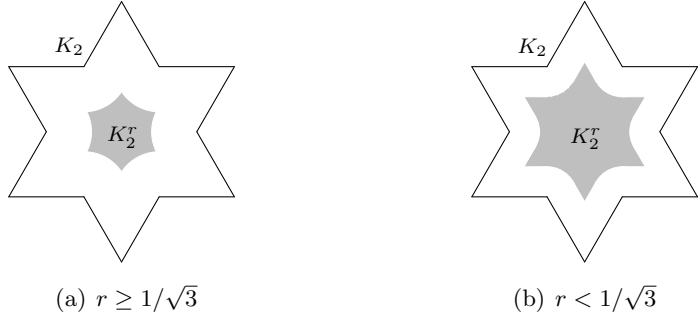
\begin{figure}[t]%
	\centering
	\definecolor{ffffff}{rgb}{1,1,1}
	\definecolor{cqcqcq}{rgb}{0.75,0.75,0.75}
	\subfigure[$r\ge 1/\sqrt{3}$\label{K2rlarge}]{\begin{tikzpicture}[line cap=round,line join=round,>=triangle 45,x=1.0cm,y=1.0cm]
		\clip(-0.2,-1.) rectangle (3.2,2.7);
		\fill[color=cqcqcq,fill=cqcqcq,fill opacity=1.0] (1.5,2.5980762113533165) -- (1.,1.7320508075688767) -- (0.,1.7320508075688767) -- (0.5,0.8660254037844379) -- (0.,0.) -- (1.,0.) -- (1.5,-0.8660254037844397) -- (2.,0.) -- (3.,0.) -- (2.5,0.8660254037844388) -- (3.,1.7320508075688767) -- (2.,1.7320508075688767) -- cycle;
		\draw [color=ffffff,fill=ffffff,fill opacity=1.0] (0.5,0.8660254037844379) circle (0.6cm);
		\draw [color=ffffff,fill=ffffff,fill opacity=1.0] (1.,1.7320508075688767) circle (0.6cm);
		\draw [color=ffffff,fill=ffffff,fill opacity=1.0] (2.,1.7320508075688767) circle (0.6cm);
		\draw [color=ffffff,fill=ffffff,fill opacity=1.0] (2.5,0.8660254037844388) circle (0.6cm);
		\draw [color=ffffff,fill=ffffff,fill opacity=1.0] (2.,0.) circle (0.6cm);
		\draw [color=ffffff,fill=ffffff,fill opacity=1.0] (1.,0.) circle (0.6cm);
		\draw [color=ffffff,fill=ffffff,fill opacity=1.0] (0.,1.7320508075688767) circle (0.6cm);
		\draw [color=ffffff,fill=ffffff,fill opacity=1.0] (1.5,2.5980762113533165) circle (0.6cm);
		\draw [color=ffffff,fill=ffffff,fill opacity=1.0] (3.,1.7320508075688767) circle (0.6cm);
		\draw [color=ffffff,fill=ffffff,fill opacity=1.0] (3.,0.) circle (0.6cm);
		\draw [color=ffffff,fill=ffffff,fill opacity=1.0] (1.5,-0.8660254037844397) circle (0.6cm);
		\draw [color=ffffff,fill=ffffff,fill opacity=1.0] (0.,0.) circle (0.6cm);
		\draw (1.5,2.5980762113533165)-- (1.,1.7320508075688767);
		\draw (1.,1.7320508075688767)-- (0.,1.7320508075688767);
		\draw (0.,1.7320508075688767)-- (0.5,0.8660254037844379);
		\draw (0.5,0.8660254037844379)-- (0.,0.);
		\draw (0.,0.)-- (1.,0.);
		\draw (1.,0.)-- (1.5,-0.8660254037844397);
		\draw (1.5,-0.8660254037844397)-- (2.,0.);
		\draw (2.,0.)-- (3.,0.);
		\draw (3.,0.)-- (2.5,0.8660254037844388);
		\draw (2.5,0.8660254037844388)-- (3.,1.7320508075688767);
		\draw (3.,1.7320508075688767)-- (2.,1.7320508075688767);
		\draw (2.,1.7320508075688767)-- (1.5,2.5980762113533165);
		
		\begin{scriptsize}
		\draw (1.2,1.0369646945610889) node[anchor=north west] {$K_2^r$};
		\draw (0.5,2.2217881673665136) node[anchor=north west] {$K_2$};
		\end{scriptsize}
		\end{tikzpicture}
	}%
	\hspace{2.5cm}
	\subfigure[\label{K2rsmall}$r<1/\sqrt{3}$]
	{\begin{tikzpicture}[line cap=round,line join=round,>=triangle 45,x=1.0cm,y=1.0cm]
		\clip(-0.2,-1.) rectangle (3.2,2.7);
		\fill[dotted,color=cqcqcq,fill=cqcqcq,fill opacity=1.0] (1.5,2.5980762113533165) -- (1.,1.7320508075688767) -- (0.,1.7320508075688767) -- (0.5,0.8660254037844379) -- (0.,0.) -- (1.,0.) -- (1.5,-0.8660254037844397) -- (2.,0.) -- (3.,0.) -- (2.5,0.8660254037844388) -- (3.,1.7320508075688767) -- (2.,1.7320508075688767) -- cycle;
		\draw [color=ffffff,fill=ffffff,fill opacity=1.0] (0.5,0.8660254037844379) circle (0.4cm);
		\draw [color=ffffff,fill=ffffff,fill opacity=1.0] (0.25,1.2990381056766573) circle (0.4cm);
		\draw [color=ffffff,fill=ffffff,fill opacity=1.0] (0.25,0.43301270189221924) circle (0.4cm);
		\draw [color=ffffff,fill=ffffff,fill opacity=1.0] (1.,0.) circle (0.4cm);
		\draw [color=ffffff,fill=ffffff,fill opacity=1.0] (1.5,2.5980762113533165) circle (0.4cm);
		\draw [color=ffffff,fill=ffffff,fill opacity=1.0] (3.,1.7320508075688767) circle (0.4cm);
		\draw [color=ffffff,fill=ffffff,fill opacity=1.0] (1.75,-0.4330127018922198) circle (0.4cm);
		\draw [color=ffffff,fill=ffffff,fill opacity=1.0] (3.,0.) circle (0.4cm);
		\draw [color=ffffff,fill=ffffff,fill opacity=1.0] (1.5,-0.8660254037844397) circle (0.4cm);
		\draw [color=ffffff,fill=ffffff,fill opacity=1.0] (2.5,0.) circle (0.4cm);
		\draw [color=ffffff,fill=ffffff,fill opacity=1.0] (0.,0.) circle (0.4cm);
		\draw [color=ffffff,fill=ffffff,fill opacity=1.0] (2.75,0.4330127018922194) circle (0.4cm);
		\draw [color=ffffff,fill=ffffff,fill opacity=1.0] (0.,1.7320508075688767) circle (0.4cm);
		\draw [color=ffffff,fill=ffffff,fill opacity=1.0] (2.75,1.2990381056766573) circle (0.4cm);
		\draw [dotted,color=ffffff,fill=ffffff,fill opacity=1.0] (1.,1.7320508075688767) circle (0.4cm);
		\draw [color=ffffff,fill=ffffff,fill opacity=1.0] (0.5,1.732050807568877) circle (0.4cm);
		\draw [color=ffffff,fill=ffffff,fill opacity=1.0] (2.5,1.7320508075688772) circle (0.4cm);
		\draw [color=ffffff,fill=ffffff,fill opacity=1.0] (2.,1.7320508075688767) circle (0.4cm);
		\draw [color=ffffff,fill=ffffff,fill opacity=1.0] (1.75,2.165063509461094) circle (0.4cm);
		\draw [color=ffffff,fill=ffffff,fill opacity=1.0] (2.5,0.8660254037844388) circle (0.4cm);
		\draw [color=ffffff,fill=ffffff,fill opacity=1.0] (1.25,2.1650635094610964) circle (0.4cm);
		\draw [color=ffffff,fill=ffffff,fill opacity=1.0] (2.,0.) circle (0.4cm);
		\draw [color=ffffff,fill=ffffff,fill opacity=1.0] (0.5,0.) circle (0.4cm);
		\draw [color=ffffff,fill=ffffff,fill opacity=1.0] (1.25,-0.4330127018922175) circle (0.4cm);
		\fill[color=ffffff,fill=ffffff,fill opacity=1.0] (0.5,1.3320508075688766) -- (1.,1.3320508075688766) -- (1.3464101615137751,1.5320508075688766) -- (1.5964101615137753,1.9650635094610964) -- cycle;
		\fill[color=ffffff,fill=ffffff,fill opacity=1.0] (2.5,1.3320508075688768) -- (2.,1.3320508075688766) -- (1.6535898384862255,1.5320508075688766) -- (1.4035898384862253,1.9650635094610964) -- cycle;
		\fill[color=ffffff,fill=ffffff,fill opacity=1.0] (2.5,0.4) -- (2.,0.4) -- (1.6535898384862262,0.2) -- (1.4035898384862266,-0.23301270189221934) -- cycle;
		\fill[color=ffffff,fill=ffffff,fill opacity=1.0] (0.5,0.4) -- (1.,0.4) -- (1.3464101615137747,0.2) -- (1.5964101615137742,-0.23301270189221934) -- cycle;
		\fill[color=ffffff,fill=ffffff,fill opacity=1.0] (0.5964101615137771,0.23301270189221848) -- (0.8464101615137769,0.6660254037844371) -- (0.8464101615137767,1.066025403784437) -- (0.596410161513776,1.4990381056766564) -- cycle;
		\fill[color=ffffff,fill=ffffff,fill opacity=1.0] (2.4035898384862238,0.2330127018922186) -- (2.1535898384862238,0.6660254037844372) -- (2.153589838486224,1.066025403784437) -- (2.4035898384862247,1.4990381056766564) -- cycle;
		\draw (1.5,2.5980762113533165)-- (1.,1.7320508075688767);
		\draw (1.,1.7320508075688767)-- (0.,1.7320508075688767);
		\draw (0.,1.7320508075688767)-- (0.5,0.8660254037844379);
		\draw (0.5,0.8660254037844379)-- (0.,0.);
		\draw (0.,0.)-- (1.,0.);
		\draw (1.,0.)-- (1.5,-0.8660254037844397);
		\draw (1.5,-0.8660254037844397)-- (2.,0.);
		\draw (2.,0.)-- (3.,0.);
		\draw (3.,0.)-- (2.5,0.8660254037844388);
		\draw (2.5,0.8660254037844388)-- (3.,1.7320508075688767);
		\draw (3.,1.7320508075688767)-- (2.,1.7320508075688767);
		\draw (2.,1.7320508075688767)-- (1.5,2.5980762113533165);
		\draw [color=ffffff] (0.5,1.3320508075688766)-- (1.,1.3320508075688766);
		\draw [color=ffffff] (1.,1.3320508075688766)-- (1.3464101615137751,1.5320508075688766);
		\draw [color=ffffff] (1.3464101615137751,1.5320508075688766)-- (1.5964101615137753,1.9650635094610964);
		\draw [color=ffffff] (1.5964101615137753,1.9650635094610964)-- (0.5,1.3320508075688766);
		\draw [color=ffffff] (2.5,1.3320508075688768)-- (2.,1.3320508075688766);
		\draw [color=ffffff] (2.,1.3320508075688766)-- (1.6535898384862255,1.5320508075688766);
		\draw [color=ffffff] (1.6535898384862255,1.5320508075688766)-- (1.4035898384862253,1.9650635094610964);
		\draw [color=ffffff] (1.4035898384862253,1.9650635094610964)-- (2.5,1.3320508075688768);
		\draw [color=ffffff] (2.5,0.4)-- (2.,0.4);
		\draw [color=ffffff] (2.,0.4)-- (1.6535898384862262,0.2);
		\draw [color=ffffff] (1.6535898384862262,0.2)-- (1.4035898384862266,-0.23301270189221934);
		\draw [color=ffffff] (1.4035898384862266,-0.23301270189221934)-- (2.5,0.4);
		\draw [color=ffffff] (0.5,0.4)-- (1.,0.4);
		\draw [color=ffffff] (1.,0.4)-- (1.3464101615137747,0.2);
		\draw [color=ffffff] (1.3464101615137747,0.2)-- (1.5964101615137742,-0.23301270189221934);
		\draw [color=ffffff] (1.5964101615137742,-0.23301270189221934)-- (0.5,0.4);
		\draw [color=ffffff] (0.5964101615137771,0.23301270189221848)-- (0.8464101615137769,0.6660254037844371);
		\draw [color=ffffff] (0.8464101615137769,0.6660254037844371)-- (0.8464101615137767,1.066025403784437);
		\draw [color=ffffff] (0.8464101615137767,1.066025403784437)-- (0.596410161513776,1.4990381056766564);
		\draw [color=ffffff] (0.596410161513776,1.4990381056766564)-- (0.5964101615137771,0.23301270189221848);
		\draw [color=ffffff] (2.4035898384862238,0.2330127018922186)-- (2.1535898384862238,0.6660254037844372);
		\draw [color=ffffff] (2.1535898384862238,0.6660254037844372)-- (2.153589838486224,1.066025403784437);
		\draw [color=ffffff] (2.153589838486224,1.066025403784437)-- (2.4035898384862247,1.4990381056766564);
		\draw [color=ffffff] (2.4035898384862247,1.4990381056766564)-- (2.4035898384862238,0.2330127018922186);
		\begin{scriptsize}
		\draw (1.2,1.0160286023398883) node[anchor=north west] {$K_2^r$};
		\draw (0.5,2.208912242071308) node[anchor=north west] {$K_2$};
		\end{scriptsize}
		\end{tikzpicture}}
	\caption{The above figures display the different shapes that the inner retract $K_2^r$ can assume.}
	\label{tutta}
\end{figure}

\textit{Case one: $r\ge 1/\sqrt{3}$.} In this case the set $K_2^r = \{x\in K_2:\ \dd(x,\pa K_2)\ge r\}$ is a hexagon with six circular arcs as its sides (see Figure \ref{K2rlarge}). Its area is
%\replaceGP{
%\[
%|K_2^r|= \pi r^2 - 3\left(\cotg\alpha + \frac{\alpha}{2\sin^2\alpha}\right)\,,
%\]}
{
\[
|K_2^r|= \pi r^2 +\frac{3\sqrt{3}}{2}- 3r\cos \alpha -6\alpha r^2\,,
\]
}
where $\alpha = \arcsin\left(\frac{1}{2 r}\right)$.
% \replaceGP{Then by formula \eqref{eqn: r} we get $|K_2^r| = \pi r^2$, that is,
%\[
%- 3\left(\cotg\alpha + \frac{\alpha}{2\sin^2\alpha}\right) = 0\,,
%\]}
{Then, observing that the decreasing function	
\[
|K_2^r| - \pi r^2 = \frac{3\sqrt{3}}{2}- 3r\cos \alpha -6\alpha r^2
\]
is negative when $r=1/\sqrt{3}$, we deduce that} 
formula \eqref{eqn: r}
% \replaceGP{we get $|K_2^r| = \pi r^2$, that is, which is clearly impossible}
{cannot hold for $r\in [1/\sqrt{3},1]$}. This shows that case one is impossible.

\textit{Case two: $1/2 \le r< 1/\sqrt{3}$.} In this case, $K_2^r$ still resembles an hexagon with curved sides, but now each side comprises a circular arc with two segments of equal length attached to it (see Figure \ref{K2rsmall}). After some elementary computations, we find that the area of $K_2^r$ is
\[
|K_2^r| = (6\sqrt{3} -\pi)r^2 -12 r +3\sqrt{3}\,.
\]
The equation $|K_2^r| = \pi r^2$ has two solutions: one larger than $1$ (and thus to be excluded), the other being
\[
r_2 = \frac{6-\sqrt{6\pi\sqrt{3}-18}}{6\sqrt{3} -2\pi} = 0.5287455502\dots
\]
(the displayed digits are exact). Therefore, the Cheeger constant of $K_2$ is 
\[
h(K_2) = r_2^{-1} = \frac{6\sqrt{3} -2\pi}{6-\sqrt{6\pi\sqrt{3}-18}} = 1.8912688715\dots
\]
Let us now consider $K_n$ for $n\ge 2$. Since $K_2\subset K_n$, we have  $h(K_n)\le h(K_2)$ and thus $r_n\ge r_2$. By Theorem \ref{theorem:Cheeger formula}, the Cheeger set $E_n$ of $K_n$ is the union of balls of radius $r_n$ that are contained in $K_n$. 

We note that $E_n$ is contained in the union of all balls of radius $r_2$ contained in $K_n$. Denoting this set by $G_n$ and observing that  $r_2 > 8/9\tan(\pi/6)$, we see from the construction that $G_n \subset K_2$ for $n=3,4$. So, $E_n\subset K_2$, hence $E_n = E_2$, when $n=3,4$. 

%\replaceR{Indeed, with reference to Figure \ref{fig:QxPx}, we take $x\in [0,1]$ such that the point $Q_x$, which is, among the centers of balls of radius $r = r(x)$ contained in $K_n$, the closest to the corner $V$, lies at distance $\frac{2x}{\sqrt{3}}$ from $V$. Still with reference to Figure \ref{fig:QxPx}, we consider the projection $P_x$ of $Q_x$ on one side of $K_2$ adjacent to $V$, so that we have $\dd(P_x,V) = x$. When $r = r_n$ we denote by $x_n$ the corresponding parameter. 
\begin{figure}[t]
\begin{tikzpicture}[line cap=round,line join=round,>=triangle 45,x=1.0cm,y=1.0cm]
\clip(-4.2,-2.) rectangle (4.2,5.3);
\draw (4.,0.)-- (3.,0.)-- (2.6653885316663746,0.5795640639490639)-- (3.0034705488670235,1.1637175695193298)-- (2.3334203644083398,1.154549796135504)-- (2.,1.7320508075688774)-- (2.3307855315040102,2.304988154542497)-- (2.9980518680355117,2.307059224967802)-- (2.6659326114845205,2.8854799250770906)-- (3.,3.4641016151377526)-- (2.3369728065913726,3.4641016151377526)-- (1.9955959142062836,4.062711814376829)-- (1.6542190218211952,3.4641016151377526)-- (1.,3.464101615137752)-- (0.663845813226438,4.046337745806564)-- (1.0159866000869044,4.619473602403172)-- (0.33102400844293056,4.62280202155837)-- (0.,5.196152422706633);
\draw (-4.,0.)-- (-3.,0.)-- (-2.6653885316663746,0.5795640639490642)-- (-3.0034705488670235,1.1637175695193303)-- (-2.3334203644083398,1.1545497961355042)-- (-2.,1.7320508075688776)-- (-2.33078553150401,2.3049881545424973)-- (-2.9980518680355113,2.3070592249678024)-- (-2.66593261148452,2.885479925077091)-- (-3.,3.464101615137753)-- (-2.336972806591372,3.464101615137753)-- (-1.9955959142062831,4.062711814376829)-- (-1.6542190218211947,3.4641016151377526)-- (-1.,3.464101615137752)-- (-0.6638458132264375,4.046337745806564)-- (-1.0159866000869038,4.619473602403172)-- (-0.33102400844293,4.62280202155837)-- (0.,5.196152422706633);
\draw [line width=0.4pt,dashed] (0.,5.196152422706633)-- (0.,-2.);
\draw (-1.0210215874367414,-2.468879493480799)-- (0.,-0.7004182284157352);
\draw (1.0210215874367412,-2.468879493480799)-- (0.,-0.7004182284157352);
\draw [line width=0.4pt,dotted] (0.,-0.7004182284157352)-- (2.55328998954086,0.7737244343648563);
\draw [line width=0.4pt,dotted] (0.,5.196152422706633)-- (2.988151054632557,0.020522975392519762);
%\begin{scriptsize}
\draw (-0.20686450907446513,1.7364341133009897) node {$\ell$};
\draw (-0.3,-0.6) node {$Q_x$};
\draw (0.25,5.2) node {$V$};
\draw (2.50672816167002,0.4202877149329806) node {$P_x$};
\draw (3.2,0.2) node {$W$};
%\end{scriptsize}
\end{tikzpicture}
\caption{A zoomed-in detail of $K_n$ with the notation used throughout Section \ref{sec:Koch}.\label{fig:QxPx}}
\end{figure}
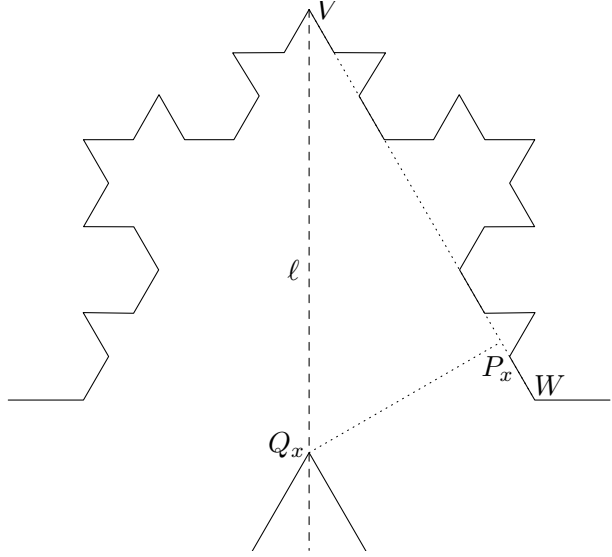
%One can easily check that, when $n=3,4$, 
%\[
%x_n = r_n\sqrt{3} \ge r_2 \cdot \sqrt{3} \ge 0.91 > 1 - 1/9\,.
%\]
%Consequently, no balls of radius $r_n$ contained in $K_n$ can intersect the triangles of side length $1/3$ and $1/9$, which means that the Cheeger sets of $K_3$ and $K_4$ are the same as that of $K_2$.

On the other hand, when we consider $K_n$ for $n\ge 5$, the situation changes. 
It will be convenient to express the inner retract $K_n^r$ in terms of a parameter $x$. With reference to Figure \ref{fig:QxPx}, fix a vertex $V$ of $K_2$ and for $x\in [0,1]$, let $P_x$ be the point on a side of $K_2$ adjacent to $V$ with $|V-P_x|=x.$ Correspondingly, $x$ determines a unique radius $r$ such that $|Q_x-V|=2x/\sqrt{3} $ where $Q_x$ is the point minimizing $|Q-V|$ among points $Q$ with $B_r(Q)\subset K_n$. Note that $P_x$ is then the projection of $Q_x$ onto the side of $K_2$ and $|Q_x-P_x| = x/\sqrt{3}$; again see Figure \ref{fig:QxPx}. When $r = r_n$, we denote by $x_n$ the corresponding parameter.

The shape of the inner retract $K_n^r$ may be different according to the value of the parameter $x=x_n$ (from which, as noted before, the value of $r$ can be obtained). Reasoning as above, we see that $x_n \in (8/9,1)$ for $n\geq 5$. A formula for the area of $K_n^r$ as a function of $x\in (8/9,1)$ can be written by first observing that $|K_n^r| = |K_2^r|+|K_n^r \setminus K_2^r|$. Then, the problem is reduced to characterizing the set $K_n^r \setminus K_2^r$. To this aim, we recall that the inequality $r_n >1/2$ forces the Cheeger set $E_n$ to only intersect the equilateral triangles that have one side on the boundary of $K_2$. Correspondingly, $K_n^r\setminus K_2^r$ results in a disjoint union of the parametric shapes depicted in Figure \ref{fig:InnerCasi23}. Thanks to the symmetry of $K_2$, it suffices to focus on $1/12$-th of the set $K_n^r\setminus K_2^r$. We now compute the area of the shapes comprising $K_n^r\setminus K_2^r$, and consisting of (portions of) curvilinear isosceles triangles, with concave circular arcs of radius $r$ replacing the two equal sides (see Figure \ref{fig:InnerCasi23}). We start by defining
\[
\cI(r,\beta) = \beta r  - \frac{\beta}{2}\sqrt{r^2 - \frac{\beta^2}{4}} -r^2 \arcsin \left(\frac{\beta}{2r} \right)\,,
\]
which represents the area of a curvilinear isosceles triangle with base of length $\beta$ and curved equal sides with curvature $1/r$ tangentially touching the base, as depicted in Figure \ref{fig:caso1inner}. We note for future reference the following estimate:
\begin{equation}\label{eq:stimaIrbeta}
\cI(r,\beta) \le \frac{\beta^3}{12 r}\,.
\end{equation}
The proof of \eqref{eq:stimaIrbeta} simply follows from the fact that the derivative of the function $\phi(t) = 2t-t\sqrt{1-t^2} -\arcsin(t)$ satisfies $\phi'(t) = 2-2\sqrt{1-t^2} <2t^2$, whence $\phi(t) \le 2t^3/3$ for $0\le t \le 1$. 

We also need to compute the areas of the portions of a curvilinear isosceles triangle, which are conveniently defined in terms of the variable $x$ and the distance from $C_n$, the $n$th iteration in the construction of the Cantor set (see Figures \ref{fig:caso2inner} and \ref{fig:caso3inner}). 
%Let now $C_n$ denote the $n$-th approximation of the Cantor set \replaceGP{}{ADD DESCRIPTION WITH PARAMETERS $a_{n,i},b_{n,i}$}, then it is not difficult to show that 
%\begin{equation}
%r_n = \sqrt{\frac{x^2}{3} + d^2_n(x)}\,,
%\end{equation}
%where $d_n(x) = \dd(x, C_n)$.
Let us set the following parameters:
\begin{itemize}
	\item $d = \dd(x,C_n)$;
	\item $r = \sqrt{x^2/3 + d^2}$;
	\item $\iota=\iota(x)$ as the largest element of $C_n$ such that $\iota\le x$;
	\item $\beta=\beta(x)$ as the length of the connected component of $\overline{[0,1]\setminus C_n}$ that contains $x$ (if there is no such component, set $\beta=0$). 
\end{itemize} 
If $x=d+\iota$, we let $A(x)$ be the area of the curvilinear polygon depicted in Figure \ref{fig:caso2inner} and have  
\begin{align*} \label{eq:caso3K}
\nonumber
A(x) = \frac{1}{2\sqrt{3}} &\left( xd +(\beta-d)  (2x-d+\beta) - (x-d+\beta -r\sqrt{3})^2 \right) \\
&-\frac{\beta}{2}\sqrt{r^2 - \frac{\beta^2}{4}} -\frac{r^2}{2} \left[2\arcsin \left( \frac{\beta}{2r}\right) -\arcsin \left( \frac{d}{r}\right) \right]\,. 
\end{align*}
Similarly, if $x=\beta+\iota-d$, then denoting by $B(x)$ the area of the curvilinear triangle depicted in Figure \ref{fig:caso3inner}, we have
\begin{equation*}\label{eq:caso2K}
B(x) = (x + d)r - \left(\frac{2x^2 +3d^2 +xd}{2\sqrt{3}} + \frac{r^2}{2} \arcsin (d/r) \right)\,.
\end{equation*}

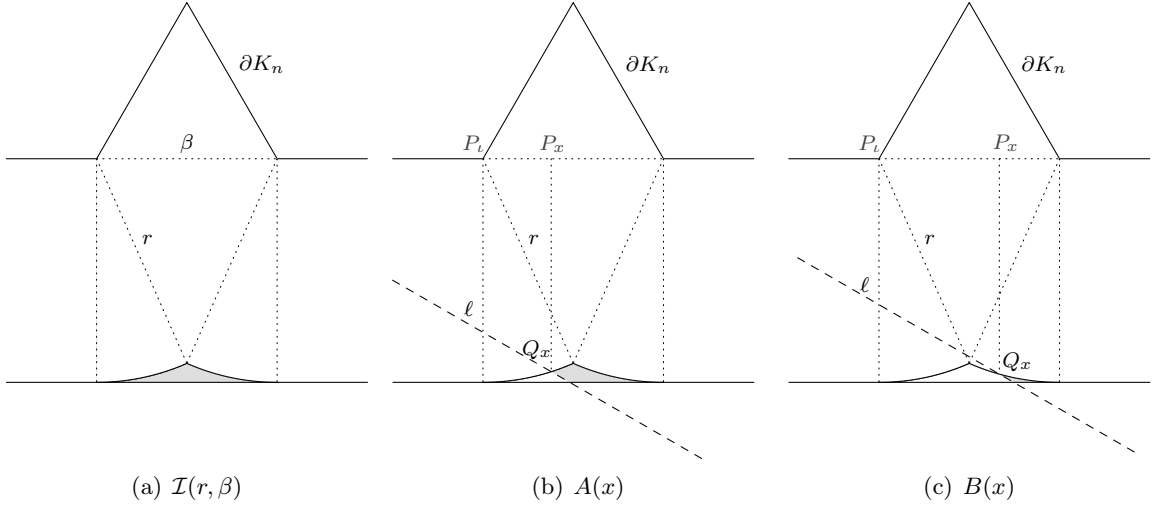
\begin{figure}[t!]%
	
	\definecolor{uuuuuu}{rgb}{0.26,0.26,0.26}
	\definecolor{ffffff}{rgb}{1.,1.,1.}
	\definecolor{eqeqeq}{rgb}{0.87,0.87,0.87}
	
	\begin{subfigure}[$\cI(r,\beta)$\label{fig:caso1inner}]
		{\begin{tikzpicture}[line cap=round,line join=round,>=triangle 45,x=0.2cm,y=0.2cm]
			\clip(-12.2,-20) rectangle (12.2,10.7);
			\fill[line width=0.pt,dotted,color=eqeqeq,fill=eqeqeq,fill opacity=1.0] (-6.,0.) -- (-6.,-14.873879815046912) -- (5.999999761581421,-14.87387981504691) -- (6.,0.) -- cycle;
			\fill[dotted,color=ffffff,fill=ffffff,fill opacity=1.0] (-6.,0.) -- (0.,-13.610007375180219) -- (6.,0.) -- cycle;
			\draw [shift={(-6.,0.)}] plot[domain=4.71238898038469:5.127609500981089,variable=\t]({1.*14.873879815046912*cos(\t r)+0.*14.873879815046912*sin(\t r)},{0.*14.873879815046912*cos(\t r)+1.*14.873879815046912*sin(\t r)});
			\draw [shift={(6.,0.)}] plot[domain=4.71238898038469:5.127609500981089,variable=\t]({-1.*14.873879815046912*cos(\t r)+0.*14.873879815046912*sin(\t r)},{0.*14.873879815046912*cos(\t r)+1.*14.873879815046912*sin(\t r)});
			\draw (-12.,0.)-- (-6.,0.)-- (0.,10.392304845413266)-- (6.,0.)-- (12.,0.);
			\draw [shift={(-6.,0.)},line width=0.4pt,dotted,color=ffffff,fill=ffffff,fill opacity=1.0]  (0,0) --  plot[domain=4.71238898038469:5.127609500981089,variable=\t]({1.*14.873879815046912*cos(\t r)+0.*14.873879815046912*sin(\t r)},{0.*14.873879815046912*cos(\t r)+1.*14.873879815046912*sin(\t r)}) -- cycle ;
			\draw [line width=0.4pt,dotted,color=ffffff] (-6.,0.)-- (0.,-13.610007375180219);
			\draw [line width=0.4pt,dotted,color=ffffff] (0.,-13.610007375180219)-- (6.,0.);
			\draw [line width=0.4pt,dotted,color=ffffff] (6.,0.)-- (-6.,0.);
			\draw [shift={(6.,0.)},color=ffffff,fill=ffffff,fill opacity=1.0]  (0,0) --  plot[domain=4.297168459788289:4.712388964355343,variable=\t]({1.*14.873879815046912*cos(\t r)+0.*14.873879815046912*sin(\t r)},{0.*14.873879815046912*cos(\t r)+1.*14.873879815046912*sin(\t r)}) -- cycle ;
			\draw [shift={(-6.,0.)}] plot[domain=4.71238898038469:5.127609500981089,variable=\t]({1.*14.873879815046912*cos(\t r)+0.*14.873879815046912*sin(\t r)},{0.*14.873879815046912*cos(\t r)+1.*14.873879815046912*sin(\t r)});
			\draw [shift={(6.,0.)}] plot[domain=4.297168459788289:4.712388964355343,variable=\t]({1.*14.873879815046912*cos(\t r)+0.*14.873879815046912*sin(\t r)},{0.*14.873879815046912*cos(\t r)+1.*14.873879815046912*sin(\t r)});
			\draw (-11.99069719031515,-14.873879815046909)-- (12.009302809684852,-14.873879815046909);
			\draw [line width=0.4pt,dotted] (-6.,0.)-- (-6.,-14.873879815046912);
			\draw [dotted] (-6.,0.)-- (6.,0.);
			\draw [line width=0.4pt,dotted] (-6.,0.)-- (0.,-13.610007375180219);
			\draw [line width=0.4pt,dotted] (6.,0.)-- (0.,-13.610007375180219);
			\draw [line width=0.4pt,dotted] (6.,0.)-- (6.,-14.873879815046912);
			\begin{scriptsize}
%			\draw[color=black] (-6.882578007353295,1.2952345371238274) node {$a_{n,i}$};
%			\draw[color=black] (6.573646479298417,1.1560322148481217) node {$b_{n,i}$};
			\draw (0,1) node {$\beta$};
			\draw (-3.5,-4.5) node[anchor=north west] {$r$};
			%\draw (-1.309138027476786,2.266592980499409) node[anchor=north west] {$s=3^{2-n}$};
			\draw (2.939355229539017,7.394084842414974) node[anchor=north west] {$\partial K_n$};
			\end{scriptsize}
			\end{tikzpicture}}
	\end{subfigure}
	~
	\begin{subfigure}[$A(x)$\label{fig:caso2inner}]
		{\begin{tikzpicture}[line cap=round,line join=round,>=triangle 45,x=0.2cm,y=0.2cm]
			\clip(-12.2,-20.) rectangle (12.2,10.7);
			\fill[line width=0.pt,dotted,color=eqeqeq,fill=eqeqeq,fill opacity=1.0] (-6.,0.) -- (-6.,-14.873879815046912) -- (5.999999761581421,-14.87387981504691) -- (6.,0.) -- cycle;
			\fill[dotted,color=ffffff,fill=ffffff,fill opacity=1.0] (-6.,0.) -- (0.,-13.610007375180219) -- (6.,0.) -- cycle;
			\fill[line width=0.pt,color=ffffff,fill=ffffff,fill opacity=1.0] (-6.,-11.547080338414503) -- (-6.,-14.873879815046912) -- (-0.23781427987911857,-14.873879815046909) -- cycle;
			\draw [shift={(-6.,0.)}] plot[domain=4.71238898038469:5.127609500981089,variable=\t]({1.*14.873879815046912*cos(\t r)+0.*14.873879815046912*sin(\t r)},{0.*14.873879815046912*cos(\t r)+1.*14.873879815046912*sin(\t r)});
			\draw [shift={(6.,0.)}] plot[domain=4.71238898038469:5.127609500981089,variable=\t]({-1.*14.873879815046912*cos(\t r)+0.*14.873879815046912*sin(\t r)},{0.*14.873879815046912*cos(\t r)+1.*14.873879815046912*sin(\t r)});
			\draw (-12.,0.)-- (-6.,0.)-- (0.,10.392304845413266)-- (6.,0.)-- (12.,0.);
			\draw [shift={(-6.,0.)},line width=0.4pt,dotted,color=ffffff,fill=ffffff,fill opacity=1.0]  (0,0) --  plot[domain=4.71238898038469:5.127609500981089,variable=\t]({1.*14.873879815046912*cos(\t r)+0.*14.873879815046912*sin(\t r)},{0.*14.873879815046912*cos(\t r)+1.*14.873879815046912*sin(\t r)}) -- cycle ;
			\draw [line width=0.4pt,dotted,color=ffffff] (-6.,0.)-- (0.,-13.610007375180219);
			\draw [line width=0.4pt,dotted,color=ffffff] (0.,-13.610007375180219)-- (6.,0.);
			\draw [line width=0.4pt,dotted,color=ffffff] (6.,0.)-- (-6.,0.);
			\draw [shift={(6.,0.)},color=ffffff,fill=ffffff,fill opacity=1.0]  (0,0) --  plot[domain=4.297168459788289:4.712388964355343,variable=\t]({1.*14.873879815046912*cos(\t r)+0.*14.873879815046912*sin(\t r)},{0.*14.873879815046912*cos(\t r)+1.*14.873879815046912*sin(\t r)}) -- cycle ;
			\draw [shift={(-6.,0.)}] plot[domain=4.71238898038469:5.127609500981089,variable=\t]({1.*14.873879815046912*cos(\t r)+0.*14.873879815046912*sin(\t r)},{0.*14.873879815046912*cos(\t r)+1.*14.873879815046912*sin(\t r)});
			\draw [shift={(6.,0.)}] plot[domain=4.297168459788289:4.712388964355343,variable=\t]({1.*14.873879815046912*cos(\t r)+0.*14.873879815046912*sin(\t r)},{0.*14.873879815046912*cos(\t r)+1.*14.873879815046912*sin(\t r)});
			\draw (-11.99069719031515,-14.873879815046909)-- (12.009302809684852,-14.873879815046909);
			\draw [line width=0.4pt,dotted] (-6.,0.)-- (-6.,-14.873879815046912);
			\draw [dotted] (-6.,0.)-- (6.,0.);
			\draw [line width=0.4pt,dotted] (-6.,0.)-- (0.,-13.610007375180219);
			\draw [line width=0.4pt,dotted] (6.,0.)-- (0.,-13.610007375180219);
			\draw [line width=0.4pt,dotted] (6.,0.)-- (6.,-14.873879815046912);
			\draw [line width=0.4pt,dashed] (-14.747666163389441,-6.496612924200624)-- (12.442250905395332,-22.19471886310712);
			%			\draw (12.442250905395332,-22.19471886310712)-- (-14.747666163389441,-6.496612924200624);
			\draw [line width=0.4pt,dotted] (-1.4646639856000565,0.)-- (-1.4646639856000565,-14.165557807193714);
			\begin{scriptsize}
			\draw (-3.5,-4.5) node[anchor=north west] {$r$};
			%			\draw (0.31500806309847,2.0833892158516902) node[anchor=north west] {$s=3^{2-n}$};
			\draw (2.960871488977172,7.375116067609024) node[anchor=north west] {$\partial K_n$};
			\draw[color=black] (-6.937492577479757,-10) node {$\ell$};
%			\draw[color=black] (-6.937492577479757,1.3274282370292139) node {$a_{n,i}$};
%			\draw[color=black] (6.575309918972184,1.1856855534999997) node {$b_{n,i}$};
			\draw[color=ffffff] (-2.5198456074858533,-13.862662681184919) node {$e$};
			\draw[color=ffffff] (3.338851978388415,-13.862662681184919) node {$p$};
			\draw[color=black] (-2.4,-12.7) node {$Q_x$};
			\draw[color=uuuuuu] (-1.3622803586639214,0.9966953087943806) node {$P_x$};
			\draw[color=uuuuuu] (-6.6,1) node {$P_\iota$};			\end{scriptsize}
			\end{tikzpicture}
		}
	\end{subfigure}
	~
	\begin{subfigure}[$B(x)$\label{fig:caso3inner}]{\begin{tikzpicture}[line cap=round,line join=round,>=triangle 45,x=0.2cm,y=0.2cm]
			\clip(-12.2,-20.) rectangle (12.2,10.7);
			\fill[line width=0.pt,dotted,color=eqeqeq,fill=eqeqeq,fill opacity=1.0] (-6.,0.) -- (-6.,-14.873879815046912) -- (5.999999761581421,-14.87387981504691) -- (6.,0.) -- cycle;
			\fill[dotted,color=ffffff,fill=ffffff,fill opacity=1.0] (-6.,0.) -- (0.,-13.610007375180219) -- (6.,0.) -- cycle;
			\fill[line width=0.pt,color=ffffff,fill=ffffff,fill opacity=1.0] (-6.,-9.706855040461368) -- (-6.,-14.873879815046912) -- (2.949549433549285,-14.87387981504691) -- cycle;
			\draw [shift={(-6.,0.)}] plot[domain=4.71238898038469:5.127609500981089,variable=\t]({1.*14.873879815046912*cos(\t r)+0.*14.873879815046912*sin(\t r)},{0.*14.873879815046912*cos(\t r)+1.*14.873879815046912*sin(\t r)});
			\draw [shift={(6.,0.)}] plot[domain=4.71238898038469:5.127609500981089,variable=\t]({-1.*14.873879815046912*cos(\t r)+0.*14.873879815046912*sin(\t r)},{0.*14.873879815046912*cos(\t r)+1.*14.873879815046912*sin(\t r)});
			\draw (-12.,0.)-- (-6.,0.)-- (0.,10.392304845413266)-- (6.,0.)-- (12.,0.);
			\draw [shift={(-6.,0.)},line width=0.4pt,dotted,color=ffffff,fill=ffffff,fill opacity=1.0]  (0,0) --  plot[domain=4.71238898038469:5.127609500981089,variable=\t]({1.*14.873879815046912*cos(\t r)+0.*14.873879815046912*sin(\t r)},{0.*14.873879815046912*cos(\t r)+1.*14.873879815046912*sin(\t r)}) -- cycle ;
			\draw [line width=0.4pt,dotted,color=ffffff] (-6.,0.)-- (0.,-13.610007375180219);
			\draw [line width=0.4pt,dotted,color=ffffff] (0.,-13.610007375180219)-- (6.,0.);
			\draw [line width=0.4pt,dotted,color=ffffff] (6.,0.)-- (-6.,0.);
			\draw [shift={(6.,0.)},color=ffffff,fill=ffffff,fill opacity=1.0]  (0,0) --  plot[domain=4.297168459788289:4.712388964355343,variable=\t]({1.*14.873879815046912*cos(\t r)+0.*14.873879815046912*sin(\t r)},{0.*14.873879815046912*cos(\t r)+1.*14.873879815046912*sin(\t r)}) -- cycle ;
			\draw [shift={(-6.,0.)}] plot[domain=4.71238898038469:5.127609500981089,variable=\t]({1.*14.873879815046912*cos(\t r)+0.*14.873879815046912*sin(\t r)},{0.*14.873879815046912*cos(\t r)+1.*14.873879815046912*sin(\t r)});
			\draw [shift={(6.,0.)}] plot[domain=4.297168459788289:4.712388964355343,variable=\t]({1.*14.873879815046912*cos(\t r)+0.*14.873879815046912*sin(\t r)},{0.*14.873879815046912*cos(\t r)+1.*14.873879815046912*sin(\t r)});
			\draw (-11.99069719031515,-14.873879815046909)-- (12.009302809684852,-14.873879815046909);
			\draw [line width=0.4pt,dotted] (-6.,0.)-- (-6.,-14.873879815046912);
			\draw [dotted] (-6.,0.)-- (6.,0.);
			\draw [line width=0.4pt,dotted] (-6.,0.)-- (0.,-13.610007375180219);
			\draw [line width=0.4pt,dotted] (6.,0.)-- (0.,-13.610007375180219);
			\draw [line width=0.4pt,dotted] (6.,0.)-- (6.,-14.873879815046912);
			\draw [line width=0.4pt,dashed] (-11.402169580306287,-6.587910979063524)-- (11.396274686895758,-19.750598913837308);
			%			\draw (11.396274686895758,-19.750598913837308)-- (-11.402169580306287,-6.587910979063524);
			\draw [line width=0.4pt,dotted] (2.000918488815551,0.)-- (2.000918488815551,-14.326187483743281);
			\begin{scriptsize}
			\draw (-3.5,-4.5) node[anchor=north west] {$r$};
			%			\draw (-4.307335045252709,1.9448453744104546) node[anchor=north west] {$s=3^{2-n}$};
			\draw (2.9311857130840737,7.373735943162981) node[anchor=north west] {$\partial K_n$};
			\draw[color=black] (-6.937492577479757,-8.4) node {$\ell$};
%			\draw[color=black] (-6.882578007353295,1.2952345371238274) node {$a_{n,i}$};
%			\draw[color=black] (6.573646479298417,1.1560322148481217) node {$b_{n,i}$};
			\draw[color=ffffff] (-2.544105629760416,-13.854618203882152) node {$e$};
			\draw[color=ffffff] (3.3487926799111962,-13.854618203882152) node {$p$};
			\draw[color=black] (3.2,-13.5) node {$Q_x$};
			\draw[color=uuuuuu] (2.4439775851190984,1.0632306666643179) node {$P_x$};
			\draw[color=uuuuuu] (-6.6,1) node {$P_\iota$};
			\end{scriptsize}
			\end{tikzpicture}}
	\end{subfigure}
	\caption{The shapes occurring in the decomposition of  $K_n^{r_n}\setminus K_2^{r_n}$ are shown in gray color (with the corresponding areas indicated below).}
	\label{fig:InnerCasi23}
\end{figure}

Then, for $n\ge 5$, the area of the inner Cheeger $K^r_n$ is given by
\begin{equation*}
|K^r_n| =  (6\sqrt{3} -\pi)r^2 -12 r +3\sqrt{3} +12\left[ \sum_{j=5}^n c(x,j)\mathcal{I}(r, 3^{2-j}) + \phi(x) \right]\,,
\end{equation*}
where $c(x, j)$ denotes the number of triangles added at step $j$ of the construction of $K_n$ with a side contained in the segment $\overline{P_xW}$ (see Figure~\ref{fig:QxPx}) and 
\[
\phi(x) =
\begin{cases}
0\, \quad &\text{if $d =0$}\,,\\
A(x)\, &\text{if $d=x-\iota >0$ }\,,\\
B(x)\, &\text{if $d=\beta+\iota-x>0$}\,.
\end{cases}
\]
Thus, one needs to solve, with respect to $x$
\begin{equation}\label{eq:tosolven}
f_n(x) := |K^r_n| - \pi r^2 = 0\,,
\end{equation}
with the constraint $x\in [x_{2}, 1]$. If one knows $x_{n-1}$, then the constraint becomes more stringent, namely $x\in [x_{n-1}, 1]$. Notice that the function $f_n$ is Lipschitz on $[x_2,1]$ and strictly decreasing. Moreover $f_n(1)<0<f_n(x_2)$, hence \eqref{eq:tosolven} has a unique solution $x_n \in (x_2,1)$. Furthermore, the function relating $x$ and $r$, which we recall to be $r = \sqrt{x^2/3 +d^2}$, is Lipschitz for $x\in [x_2, 1]$ and its almost-everywhere defined derivative is bounded from below by a positive constant. Thus, this relation is invertible and to each $x_n$ corresponds a unique $r_n$.

We now compute $h(K_5)$. Observing that 
\[
f_5(r_2\sqrt{3}) = 12 \cI(r_2, 1/27) > 0
\]
and
\[
f_5(25/27) = \frac{2211\sqrt{3} -1250\pi}{(27\sqrt{3})^2} +12\cI\left(\frac{25}{27\sqrt{3}}, 1/27 \right)<0 \,,
\]
we deduce that the solution must belong to the interval $[r_2\sqrt{3},25/27]$, hence $d_5 = 0$ and the equation to be solved (here depending only on the variable $r$) reduces to
\begin{equation}\label{eq:K5}
(6\sqrt{3} -2\pi)r^2 -12 r +3\sqrt{3} +12 \cI(r,1/27) =0\,.
\end{equation}
By numerically solving \eqref{eq:K5}, we obtain $r_5 = 0.528751827\dots$ 

Now, since Theorem \ref{theorem:Cheeger formula} holds, we can apply Lemma \ref{lem:approx} to deduce an estimate for $h(K)$. We note that, for $n\ge 4$, the area of the difference set $K^{r_n}\setminus K_n^{r_n}$ is estimated by
\begin{equation}\label{eq:stima2K}
\left|K^{r_n}\setminus K_n^{r_n}\right| \le 12\sum_{j=n+1}^{\infty} 2^{j-5}\cI(r_n,3^{2-j})\,.
\end{equation}
Then, by \eqref{eq:stimaIrbeta}, one obtains
\[
\mathcal{I}(r_n, 3^{2-j}) \le \frac{1}{12 r_n 3^{3j-6}}\,.
\]
Plugging this last inequality into \eqref{eq:stima2K} gives
\begin{equation*}\label{eq:stima2Kbis}
\left|K^{r_n}\setminus K_n^{r_n}\right| \le \frac{12}{r_n}\sum_{j=n+1}^{\infty} \frac{2^{j-5}}{12\cdot 3^{3j-6}}\, \le \frac{1}{r_n}\sum_{j=n+1}^{\infty} \frac{2^{j-5}}{3^{3j-6}}\,.
\end{equation*}
Applying Lemma \ref{lem:approx} with $\Om = K$ and $\om = K_n$, recalling that $1/r_n \le 2$, and setting $\bar r = h(K)^{-1}$, we obtain the error estimate 
\begin{equation}\label{eq:stimafinale}
0 \le \bar r- r_n \le  \frac{1}{2\pi r_n^2}\sum_{j=n+1}^{\infty} \frac{2^{j-5}}{3^{3j-6}} \le \sum_{j=n+1}^{\infty} \frac{2^{j-4}}{3^{3j-5}} = \frac{2^{n-3}}{25\cdot 3^{3n-5}}\,.
\end{equation}
Computing \eqref{eq:stimafinale} for the particular value $n=5$ reveals that $r_5$ approximates $\bar r$ with a precision of $10^{-5}$ (indeed, the error is less than $3\cdot 10^{-6}$). Coupling this with $r_5\ge 1/2$ finally gives
\[
|h(K) - h(K_5)| < 12 \cdot 10^{-6} < 10^{-4}  
\]
and thus
\[
h(K)= 1.8912\dots
\]
Thanks to similar, but more tedious, computations for the cases $n=6,7$ and $8$, one shows that \eqref{eq:tosolven} has a solution $x_n$ for which $\phi(x_n)=0$. In particular, this gives
\[
h(K) = 1.89124548\dots
\]
where the displayed digits are exact.

\bibliographystyle{alpha}
\bibliography{CheegerFinal}

\end{document}